\definecolor{red}{rgb}{1.00,0.00,0.00}
{\numberwithin{equation}{section}
\setlength{\parindent}{1em}

\newtheorem{theorem}{Theorem}[section]
\newtheorem{lemma}{Lemma}[section]
\newtheorem{remark}{Remark}[section]

\renewcommand{\footnotesize}{\scriptsize}
\newcommand{\normmm}[1]{{\left\vert\kern-0.25ex\left\vert
\kern-0.25ex\left\vert #1
    \right\vert\kern-0.25ex\right\vert\kern-0.25ex\right\vert}}

\geometry{left=3cm,right=3cm,top=2.5cm,bottom=2cm}

\newcommand{\tdiv}{\mathrm{div}}
\newcommand{\avg}[1]{\ensuremath{\left\{\!\!\left\{ #1\right\}\!\!\right\}}}
\newcommand{\jump}[1]{\ensuremath{\left\llbracket #1\right\rrbracket}}

\begin{document}           % End of preamble and beginning of text.

\title{A Robin-type domain decomposition method for a novel mixed-type DG method for the coupled Stokes-Darcy problem}
\author{Lina Zhao\footnotemark[1]}
\renewcommand{\thefootnote}{\fnsymbol{footnote}}
\footnotetext[1]{Department of Mathematics, City University
of Hong Kong, Kowloon Tong, Hong Kong SAR, China. ({linazha@cityu.edu.hk}). This author was supported by a grant from City University of Hong Kong (Project No. 7200699).}
%\footnotetext[2]{Department of Mathematics, The Chinese University
%of Hong Kong, Hong Kong SAR, China. ({tschung@math.cuhk.edu.hk}).}
\maketitle

\begin{abstract}
In this paper, we first propose and analyze a novel mixed-type DG method for the coupled Stokes-Darcy problem on simplicial meshes. The proposed formulation is locally conservative. A mixed-type DG method in conjunction with the stress-velocity formulation is employed for the Stokes equations, where the symmetry of stress is strongly imposed. The staggered DG method is exploited to discretize the Darcy equations. As such, the discrete formulation can be easily adapted to account for the Beavers-Joseph-Saffman interface conditions without introducing additional variables. Importantly, the continuity of normal velocity is satisfied exactly at the discrete level. A rigorous convergence analysis is performed for all the variables. Then we devise and analyze a domain decomposition method via the use of Robin-type interface boundary conditions, which allows us to solve the Stokes subproblem and the Darcy subproblem sequentially with low computational costs. The convergence of the proposed iterative method is analyzed rigorously. In particular, the proposed iterative method also works for very small viscosity coefficient. Finally, several numerical experiments are carried out to demonstrate the capabilities and accuracy of the novel mixed-type scheme, and the convergence of the domain decomposition method.

\end{abstract}

\textbf{Keywords:} discontinuous Galerkin methods; the coupled Stokes-Darcy problem; domain decomposition method; Robin-type conditions; symmetric stress; Beavers-Joseph-Saffman condition.

\pagestyle{myheadings} \thispagestyle{plain}
\markboth{ZhaoChung} {DDM for the coupled Stokes-Darcy problem}

% importance of the model problem, the deficiencies of existing works (to address the advantages of our scheme), our novel contributions

\section{Introduction}

%We use the velocity-pressure-stress formulation for the Stokes equations as have done in \cite{Bochev95}, we also use mixed discontinuous Galerkin method for Darcy equations (cf. \cite{Wen20}). Choose proper integration by parts and proper polynomial orders. The interface condition can be imposed via integration by parts and Nitsche technique.
%
%
%\Red{velocity-pressure-stress formulation, stress is strongly symmetric, locally conservative? allow unfitted meshes. polyDG. derive a new formulation}

Coupling incompressible flow and porous media flow has drawn great attention over the past years, which has been involved in many practical applications, such as ground water contamination and industrial filtration. This coupled phenomenon can be mathematically expressed by the Stokes-Darcy problem, where the free fluid region is governed by the Stokes equations and the porous media region is described by Darcy's law, and three transmission conditions are prescribed on the interface (cf. \cite{Beavers67,Saffman71}).

The devising of numerical schemes for the coupled Stokes-Darcy problem hinges on a suitable choice of stable pairs for both the Stokes equations and Darcy equations. As it is well known, the standard mixed formulations for the Stokes equations and Darcy equations earn different compatibility conditions, thus a straightforward application of the existing solvers for the Stokes equations and Darcy equations may not be feasible.
To this end, a great amount of effort has been devoted to developing accurate and efficient numerical schemes for the coupled Stokes-Darcy problem, and a non-exhaustive list of these approaches include Lagrange multiplier methods \cite{Layton02,Gatica08,Vassilev09,Gatica11}, weak Galerkin method \cite{Chen15,Li18}, strongly conservative methods \cite{Kanschat10,Fu18}, stabilized mixed finite element method \cite{Rui17,Mahbub19}, discontinuous Galerkin (DG) methods \cite{Eiviere05,Wen20}, virtual element method \cite{Liu19,Wang19}, a lowest-order staggered DG method \cite{Zhao20} and penalty methods \cite{Zhou21}. The coupled Stokes-Darcy problem describes multiphysics phenomena, and involves a Stokes subproblem and a Darcy subproblem, it is thus natural to resort to domain decomposition methods, which allows one to solve the coupled system sequentially with a low computational cost. Various domain decomposition methods have been developed for the coupled Stokes-Darcy problem, see, for example \cite{Discacciati07,Cao11,Chendd11,Cao14,Vassilev14,He15}, most of which are based on velocity-pressure formulation of the Stokes equations.

%One can refer to some of the existing works \cite{Discacciati07,Cao14,Vassilev14,He15} in the context of domain decomposition methods for the coupled Stokes-Darcy problem.

The first purpose of this paper is to devise and analyze a novel mixed-type method for the coupled Stokes-Darcy problem. In the proposed formulation, we use a mixed-type DG method in conjunction with stress-velocity formulation for the discretization of the Stokes subproblem, and we use staggered DG method introduced in \cite{Chung09} for the discretization of the Darcy subproblem. Unlike the schemes proposed in \cite{Riviere05,Wen20}, we enforce the normal continuity of velocity directly into the formulation of the method without resorting to Lagrange multiplier. This is based on our observation that the degrees of freedom for the Darcy velocity space is defined by using dual edge degrees of freedom and interior degrees of freedom, and the interface can be treated as the union of the primal edges. Therefore, the normal continuity of velocity can be simply imposed into the discrete formulation by replacing the Darcy's normal velocity by the Stokes' normal velocity, which in conjunction with a suitable decomposition of the stress variable on the interface yields the resulting formulation. It is worth mentioning that the normal continuity of velocity is satisfied exactly at the discrete level.
The advantages of the proposed formulation are multifold: First, the symmetry of stress is strongly imposed. The stress variable has a physical meaning and its computation is also important from a practical point of view. Second, pressure is eliminated from the equation, which reduces the size of the global system. Third, the continuity of normal velocity can be imposed directly without using Lagrange multiplier. A rigorous convergence analysis is carried out for all the variables. The proof for the optimal convergence of $L^2$-error of Darcy velocity is non-trivial, to overcome this issue, we exploit a non-standard trace theorem. We remark that the proposed scheme utilizes stress-velocity formulation for the Stokes equations, which is rarely explored for the coupled Stokes-Darcy problem in the existing literature. Our work will shed new insights into the devising and analysis of novel numerical schemes for the coupled Stokes-Darcy problem. We also address that the numerical results demonstrate that the proposed scheme also works well for small values of viscosity.

The proposed global formulation involves four variables: stress, fluid velocity, Darcy velocity and Darcy pressure, which may require high computational costs especially for large scale problems. To reduce the computational costs, we aim to devise a domain decomposition method based on the proposed spatial discretization, where the global formulation is decomposed into the Stokes subproblem and the Darcy subproblem by using newly constructed Robin-type interface conditions. The construction of the novel Robin-type interface condition is not a simple extension of the method introduced in \cite{Sun21}, instead it takes advantage of the special features of staggered DG method. Moreover, the compatibility conditions are derived to ensure the equivalence of the modified discrete formulation and the original discrete formulation. The convergence of the Robin-type domain decomposition method is rigorously analyzed with the help of the compatibility conditions. Our convergence analysis indicates that the convergence of the Robin-type domain decomposition method is $1-\mathcal{O}(h)$ for $\delta_p=\delta_f$, where $\delta_p$ and $\delta_f$ are parameters introduced in Section~\ref{sec:robin}. Moreover, when $\delta_p$ and $\delta_f$ satisfy certain conditions, then the convergence rate of the Robin-type domain decomposition method is $h$-independent, which is particularly inspiring. Several numerical experiments are carried out to demonstrate the performance of the proposed scheme. We can observe that the proposed domain decomposition method converges as reflected by the theories.
In particular, it also converges for small values of viscosity.

The rest of the paper is organized as follows. In the next section, we describe the model problem and derive the stress-velocity formulation for the Stokes equations. In Section~\ref{sec:scheme}, we derive the discrete formulation and prove the unique solvability. The convergence error estimates for all the variables measured in $L^2$-error are given in Section~\ref{sec:priori}. Then, the Robin-type domain decomposition method is constructed and analyzed in Section~\ref{sec:robin}. Several numerical experiments are carried out in Section~\ref{sec:numerical} to verify the proposed theories. Finally, a concluding remark is given in Section~\ref{sec:conclusion}.

\section{Model problem}

Let $\Omega:=\Omega_S\cup \Omega_D$ denote the polygonal domain in $\mathbb{R}^2$, where $\Omega_S$ and $\Omega_D$ represent the fluid domain and porous media domain, respectively. Let $\Gamma$ denote the interface between $\Omega_S$ and $\Omega_D$, and let $\Gamma_i = \partial \Omega_i\backslash \Gamma$ for $i\in \{S,D\}$, see Figure~\ref{fig:domain} for an illustration of the computational domain. We use $\bm{n}_i (i=S,D)$ to represent the unit normal vector to $\partial \Omega_i$. Let $\bm{t}_S$ be an orthonormal system of tangential vectors on $\Gamma$.

\begin{figure}[t]
\centering
\includegraphics[width=0.4\textwidth]{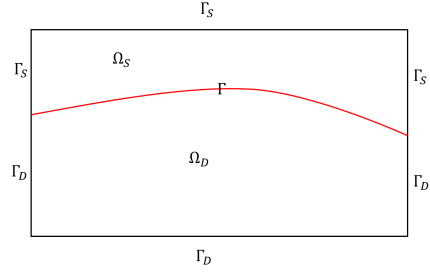}
\caption{Profile of the computational domain.}
\label{fig:domain}
\end{figure}

In $\Omega_S$, the fluid flow is governed by the Stokes equations
\begin{alignat}{2}
-\mbox{div}\,\underline{\sigma}&= \bm{f}_S&&\quad \mbox{in}\;\Omega_S,\label{eq:stokes1}\\
\underline{\sigma}&=2\mu\varepsilon(\bm{u}_S)-pI&&\quad \mbox{in}\;\Omega_S,\label{eq:sigmap}\\
\text{div}\,\bm{u}_S&=0&&\quad \mbox{in}\;\Omega_S,\label{eq:stokes2}\\
\bm{u}_S&=\bm{0}&&\quad \mbox{on}\;\Gamma_S,\label{eq:stokes3}
\end{alignat}
where $\mu$ is the viscosity coefficient which is assumed to be a positive constant, $\bm{u}_S$ is the fluid velocity, $p$ is the fluid pressure, $\underline{\sigma}$ is the stress tensor, $I$ is the $2\times 2$ identity matrix and $\varepsilon(\bm{u}_S)$ is the deformation tensor defined by $\varepsilon(\bm{u}_S)=\frac{\nabla \bm{u}_S+\nabla \bm{u}_S'}{2}$. Hereafter, we use $A'$ to represent the transpose of $A$.
%which can be rewritten as
%\begin{alignat}{2}
%-\tdiv \bm{\sigma}&=\bm{f}&&\quad \mbox{in}\;\Omega,\label{eq:first1}\\
%\mathcal{A}\bm{\sigma}&=2\nu\varepsilon(\bm{u}_s)&&\quad \mbox{in}\;\Omega\label{eq:first2},\\
%\bm{u}_s&=\bm{0}&&\quad \mbox{on}\;\partial\Omega.
%\end{alignat}
%where
%\begin{align*}
%\mathcal{A}\bm{\sigma}=\bm{\sigma}-\frac{1}{2}\mbox{tr}(\bm{\sigma})I.
%\end{align*}

In $\Omega_D$, the governing equations are Darcy equations
\begin{align}
\tdiv \bm{u}_D&=f_D\;\mbox{in}\; \Omega_D,\label{eq:Darcy1}\\
\bm{u}_D+K \nabla p_D&=0\quad \mbox{in}\; \Omega_D\label{eq:Darcy2},\\
p_D&=0\quad \mbox{on}\; \Gamma_D.
\end{align}
%Interface conditions
%\begin{alignat}{2}
%\bm{u}_s\cdot\bm{n}_{12}&=\bm{u}_D\cdot\bm{n}_{12}&&\quad \mbox{in}\; \Gamma_{12},\label{eq:interface1}\\
%p_S-2\mu \varepsilon(\bm{u}_S)\bm{n}_{12}\cdot\bm{n}_{12}&=p_D&&\quad \mbox{in}\; \Gamma_{12},\label{eq:interface2}\\
%\bm{u}_1\cdot\bm{t}_{12}&=-2G\mu \varepsilon(\bm{u}_S)\bm{n}_{12}\cdot\bm{t}_{12}&&\quad \mbox{in}\; \Gamma_{12}.\label{eq:interface3}
%\end{alignat}
On the interface $\Gamma$, we prescribe the following Beavers-Joseph-Saffman conditions (cf. \cite{Beavers67,Saffman71})
\begin{alignat}{2}
\bm{u}_S\cdot\bm{n}_{S}&=\bm{u}_D\cdot\bm{n}_{S}&&\quad \mbox{on}\; \Gamma,\label{eq:interface1}\\
-\underline{\sigma}\bm{n}_{S}\cdot \bm{n}_{S}&=p_D&&\quad \mbox{on}\; \Gamma\label{eq:interface22},\\
\bm{u}_S\cdot\bm{t}_{S}&=-G \underline{\sigma}\bm{n}_{S}\cdot\bm{t}_{S}&&\quad \mbox{on}\; \Gamma,\label{eq:interface32}
\end{alignat}
where $G>0$ is the phenomenological friction coefficient.

Now we will derive a stress-velocity formulation based on \eqref{eq:stokes1}-\eqref{eq:stokes2} by eliminating $p$. First, we have
\begin{align*}
\mbox{tr}(\varepsilon(\bm{u}_S))=\mbox{div}\bm{u}_S=0,\\
\mbox{tr}(\underline{\sigma})=2\mu\mbox{tr}(\varepsilon(\bm{u}_S))-2p=-2p,
\end{align*}
thus, we obtain
\begin{align*}
p=-\frac{1}{2}\mbox{tr}(\underline{\sigma}).
\end{align*}
Consequently, we can recast \eqref{eq:sigmap} into the following equivalent formulation
\begin{align*}
\mathcal{A}\underline{\sigma}=2\mu\varepsilon(\bm{u}_S),
\end{align*}
where
\begin{align*}
\mathcal{A}\underline{\sigma}=\underline{\sigma}-\frac{1}{2}\mbox{tr}(\underline{\sigma})I.
\end{align*}
Note that  $\mathcal{A}\underline{\sigma}$ is a trace-free tensor called
deviatoric part and $\text{ker}(\mathcal{A})=\{q I\;| \;q\; \text{is a scalar function}\}$.

Then we can rewrite \eqref{eq:stokes1}-\eqref{eq:stokes2} as the following equivalent system:
\begin{alignat}{2}
-\mbox{div}\,\underline{\sigma}&=\bm{f}&&\quad \mbox{in}\;\Omega_S,\label{eq:first1}\\
\mathcal{A}\underline{\sigma}&=2\mu\varepsilon(\bm{u}_S)&&\quad \mbox{in}\;\Omega_S\label{eq:first2},\\
\bm{u}_S&=\bm{0}&&\quad \mbox{on}\;\Gamma_S.
\end{alignat}

We introduce some notations that will be used throughout the paper. Let $D\subset \mathbb{R}^d,d=1,2$. By $(\cdot,\cdot)_D$, we denote the scalar product in $L^2(D):(p,q)_D:=\int_D p \,q\,dx$. We use the same notation for the scalar product in $L^2(D)^2$ and in $L^2(D)^{2\times 2}$. More precisely, $(\bm{\xi},\bm{w})_D:=\sum_{i=1}^2 (\xi^i,w^i)$ for $\bm{\xi},\bm{w}\in L^2(D)^2$ and $(\underline{\psi},\underline{\zeta})_D:=\sum_{i=1}^2\sum_{j=1}^2 (\psi^{i,j},\zeta^{i,j})_D$ for $\underline{\psi},\underline{\zeta}\in L^2(D)^{2\times 2}$. The associated norm is denoted by $\|\cdot\|_{0,D}$.
Given an integer $m>0$, we denote the scalar-valued Sobolev spaces by $H^m(D) =W^{m,2}(D)$ with the norm $\|\cdot\|_{m,D}$ and seminorm $|\cdot|_{m,D}$. In addition, we use $H^m(D)^d$ and $H^m(D)^{d\times d}$ to denote the vector-valued and tensor-valued Sobolev spaces, respectively. In the following, we use $C$ to denote a generic constant independent of the meshsize which may have different values at different occurrences.

\section{The new scheme}\label{sec:scheme}

In this section, we are devoted to the derivation of a novel mixed-type DG method for the coupled Stokes-Darcy system \eqref{eq:Darcy1}-\eqref{eq:interface32} and \eqref{eq:first1}-\eqref{eq:first2}. To this end, we first introduce the following meshes.
Following \cite{Chung09,Zhao18,Zhao21}, we first let $\mathcal{T}_{u,D}$ be the
initial partition of the domain $\Omega_D$ into non-overlapping triangular meshes as shown in the left panel of Figure~\ref{fig:grid}. We use $\mathcal{F}_{h,\Gamma}$ to denote the set of edges lying on the interface $\Gamma$.
We also let $\mathcal{F}_{pr,D}$ be the set of all edges excluding the interface edges in the initial partition $\mathcal{T}_{u,D}$ and $\mathcal{F}_{pr,D}^{0}\subset \mathcal{F}_{pr,D}$ be the
subset of all interior edges of $\Omega_D$.
For each triangular mesh $E$ in the initial partition $\mathcal{T}_{u,D}$, we construct the sub-triangulation by connecting an interior point $\nu$ to
all the vertices of $E$. For simplicity, we select $\nu$ as the center point.
We rename the union of these triangles sharing the common point $\nu$ by $S(\nu)$. %We remark that $S(\nu)$ is the rectangular mesh in the initial partition.
Moreover, we will use $\mathcal{F}_{dl,D}$ to denote the set of all the new edges generated by this subdivision process and
use $\mathcal{T}_{h,D}$ to denote the resulting quasi-uniform triangulation,
on which our basis functions are defined. Here the sub-triangulation $\mathcal{T}_{h,D}$ satisfies standard mesh regularity assumptions (cf. \cite{Ciarlet78}).
% Furthermore,
%$\mathcal{T}_{h_i}$ is assumed to satisfy local quasi-uniform assumption in the sense that for any pair of elements $\tau$ and
%$\tau'$ in $\mathcal{T}_{h_i}$ which share an edge, there exists a
%constant $\kappa$ independent of $h_\tau$ and $h_{\tau'}$ such
%that $\kappa^{-1} \le h_\tau/h_{\tau'} \le \kappa$.
%In addition, we define $\mathcal{F}_D:=\mathcal{F}_{pr,D}\cup \mathcal{F}_{dl,D}$, $\mathcal{F}_D^{0}:=\mathcal{F}_{pr,D}^{0}\cup \mathcal{F}_{dl,D}$.
% Also, we let $h_e$ denote the length of edge $e\in \mathcal{F}_{h,D}$.
This construction is illustrated in Figure~\ref{fig:grid},
where the black solid lines are edges in $\mathcal{F}_{pr,D}^0$
and the red dotted lines are edges in $\mathcal{F}_{dl,D}$. For each interior edge $e\in \mathcal{F}_{pr,D}^0$, we use $D(e)$ to denote the union of the two triangles in $\mathcal{T}_{h,D}$ sharing the edge $e$,
and for each boundary edge $e\in(\mathcal{F}_{pr,D}\cup \mathcal{F}_{h,\Gamma})\backslash\mathcal{F}_{pr,D}^0$, we use $D(e)$ to denote the triangle in $\mathcal{T}_{h,D}$ having the edge $e$,
see Figure~\ref{fig:grid}.

On the other hand, we let $\{\mathcal{T}_{h,S}\}$ be a family of shape-regular triangulations of $\bar{\Omega}_S$. For simplicity, we assume that the meshes between the two domains $\Omega_S$ and $\Omega_D$ match at the interface. We use $\mathcal{F}_{h,S}$ to denote the set of all edges of $\mathcal{T}_{h,S}$ excluding the edges lying on the interface. We use $\mathcal{F}_{h,S}^0$ to represent the subset of $\mathcal{F}_{h,S}$, i.e., $\mathcal{F}_{h,S}^0$ is the union of interior edges of $\bar{\Omega}_S$. In addition, we let $\mathcal{T}_h=\mathcal{T}_{h,S}\cup \mathcal{T}_{h,D}$ and $\mathcal{F}_h=\mathcal{F}_{h,S}\cup \mathcal{F}_{h,D}$. In what follows, $h_e$ stands for the length of edge $e\in \mathcal{F}_{h}$. For each triangle
$T\in \mathcal{T}_{h}$, we let $h_T$ be the diameter of $T$
%$h_S=\max_{T\in \mathcal{T}_{h,S}}h_T$,
%$h_D=\max_{T\in \mathcal{T}_{h,D}}h_T$,
and
$h=\max_{T\in \mathcal{T}_{h}}h_T$. For each interior edge $e$, we then fix $\bm{n}_{e}$ as one of the two possible unit normal vectors on $e$.
When there is no ambiguity,
we use $\bm{n}$ instead of $\bm{n}_{e}$ to simplify the notation. In addition, we use $\bm{t}$ to represent the corresponding unit tangent vector.
To simplify the presentation, we only consider triangular meshes in this paper, and the extension to polygonal meshes will be investigated in the future paper.

For $k\geq 1$, $T\in \mathcal{T}_{h}$ and $e\in \mathcal{F}_h$, we define $P^k(T)$ and $P^k(e)$ as the spaces of polynomials of degree up to order $k$ on $T$ and $e$, respectively. For a scalar or vector function $v$ belonging to the broken Sobolev space, its jump and average on an interior edge $e$ are defined as
\begin{equation*}
\jump{v}_e=v_{1}-v_{2},\quad \avg{v}_e=\frac{v_{1}+v_{2}}{2},
\end{equation*}
where $v_j=v_{T_j},j=1,2$ and $T_{1}$, $T_{2}$ are the
two triangles in $\mathcal{T}_{h}$ having the edge $e$. For the boundary edges, we simply define $\jump{v}_e=v_{1}$ and $\avg{v}_e=v_{1}$. We can omit the subscript $e$ when it is clear which edge we are referring to.

\begin{figure}[t]
\centering
\includegraphics[width=0.35\textwidth]{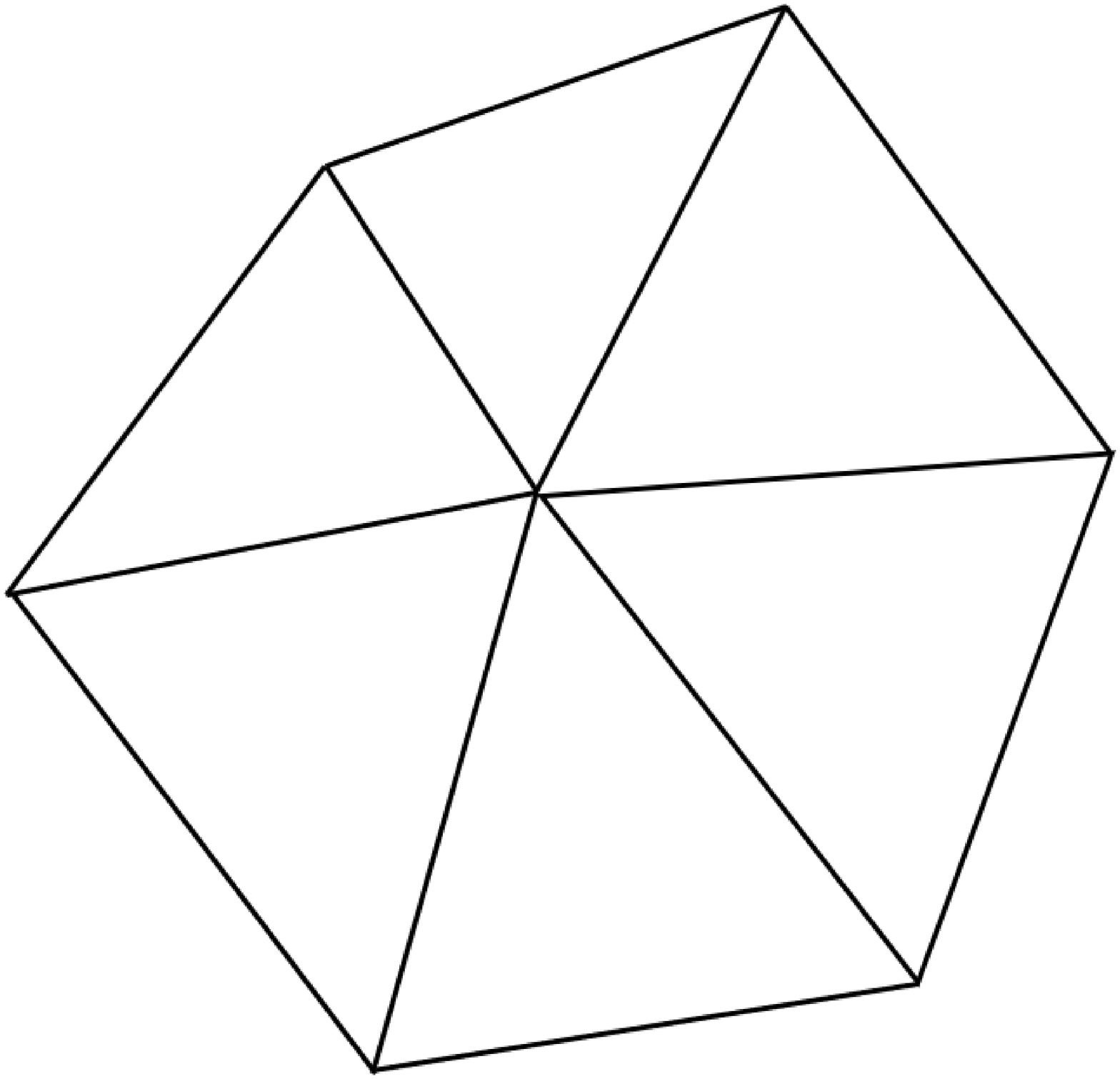}
\includegraphics[width=0.35\textwidth]{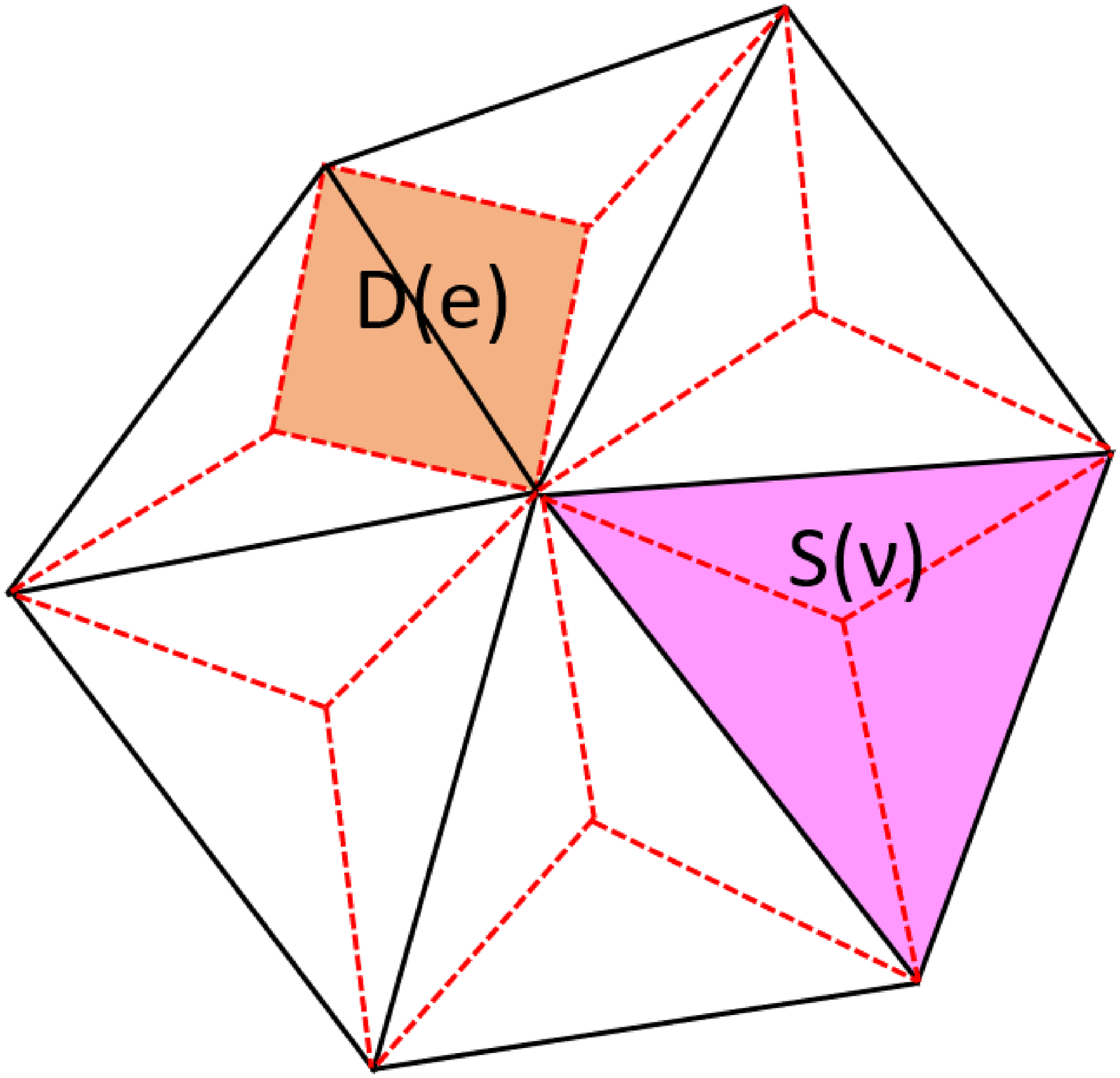}
\caption{The schematics of the meshes for $\Omega_D$. Primal meshes (left), dual meshes and simplicial meshes (right). The solid lines represent the primal edges and the dashed lines represent the dual edges. $S(\nu)$ represents the primal element and $D(e)$ represents the dual element.}
\label{fig:grid}
\end{figure}

We define the following spaces for the numerical approximations.
\begin{align*}
\Sigma_{h}^S:&=\{\underline{w}:\underline{w}=\underline{w}',\underline{w}|_T\in P_{k-1}(T)^{2\times 2}, \;\forall T\in \mathcal{T}_{h,S}\},\\
U_{h}^S:&=\{\bm{v}_S:\bm{v}_S|_T\in P_{k}(T)^{2}, \;\forall T\in \mathcal{T}_{h,S};\bm{v}_S|_{\Gamma_S}=\bm{0}\}
\end{align*}
and
\begin{align*}
U_{h}^D:&=\{\bm{v}_D:\bm{v}_D|_T\in P_{k}(T)^{2}, \;\forall T\in \mathcal{T}_{h,D},\jump{\bm{v}_D\cdot\bm{n}}_e=0\;\forall e\in \mathcal{F}_{dl,D}\},\\
P_h^D:&=\{q:q|_T\in P_{k}(T), \;\forall T\in \mathcal{T}_{h,D}, \jump{q}_e=0\;\forall e\in \mathcal{F}_{pr,D}^0; q|_{\Gamma_D}=0\}.
\end{align*}
In the following, we use $\varepsilon_h(\bm{v})$ to represent the element-wise defined
deformation tensor, i.e., $\varepsilon_h(\bm{v})\mid_T=\frac{\nabla \bm{v}\mid_T+\nabla \bm{v}'\mid_T}{2}$ for $\bm{v}\in U_h^S$. Hereafter, we use $\nabla_h$ and $\tdiv_h$ to represent the gradient and divergence operators defined element by element, respectively.

For later use, we specify the degrees of freedom for $P_h^D$ as follows.
    \begin{itemize}
        \item[(SD1)]
            For $e\in \mathcal{F}_{pr,D}^0\cup \mathcal{F}_{h,\Gamma}$, we have
            \begin{equation}
                \phi_e(q) :=( q,p_k)_e\quad \forall p_k\in P_k(e).\label{eq:dof1}
            \end{equation}
        \item[(SD2)]
            For each $T\in \mathcal{T}_{h,D}$, we define
            \begin{equation}
                \phi_T(q) := (q,p_{k-1})_T\quad \forall p_{k-1}\in P_{k-1}(T).\label{eq:dof2}
            \end{equation}
    \end{itemize}
%\begin{alignat}{2}
%((2\nu)^{-1}\mathcal{A}\bm{\sigma}_h,\bm{w}) &= B_h(\bm{w},\bm{u}_{h,s})\quad&& \forall \bm{w}\in \Sigma_h,\label{eq:discrete1}\\
%B_h(\bm{\sigma}_h,\bm{v})+\sum_{e\in \mathcal{F}_h^0}\tau h_e^{-1}(\jump{\bm{u}_{h,s}},\jump{\bm{v}})_e&=(\bm{f},\bm{v})\quad &&\forall \bm{v}\in U_h\label{eq:discrete2},
%\end{alignat}
%%
%where
%\begin{align*}
%B_h(\bm{\sigma}_{h}, \bm{v})&=-\sum_{e\in \mathcal{F}_{h}^0}(\avg{\bm{\sigma}_{h} \bm{n}}, \jump{\bm{v}})_{e}+\sum_{T\in \mathcal{T}_{h}}(\bm{\sigma}_{h},\varepsilon_h(\bm{v}))_T.
%\end{align*}
%
%
%Integration by parts implies that
%\begin{align}
%B_h(\underline{\sigma}_{h}, \bm{v})=\sum_{e\in \mathcal{F}_{h}^0}(\jump{\underline{\sigma}_{h} \bm{n}}, \avg{\bm{v}})_{e}-\sum_{T\in \mathcal{T}_{h}}(\mbox{div}_h\, \underline{\sigma}_h,\bm{v})_T.\label{eq:adjoint}
%\end{align}
%
%
%\begin{align*}
%(\tdiv \bm{\sigma},\bm{w})=\sum_{e\in \mathcal{F}_{h}^0}(\avg{\bm{\sigma}\bm{n}},\jump{\bm{w}})_e+\sum_{e\in \Gamma}(\bm{\sigma}\bm{n},\bm{w})_e-(\bm{\sigma},\nabla_h_h \bm{w})
%\end{align*}
%
%
%\begin{align*}
%\sum_{e\in \Gamma}(\bm{\sigma}\bm{n},\bm{w})_e&=\sum_{e\in \Gamma}(\bm{\sigma}\bm{n}\cdot\bm{n},\bm{w}\cdot\bm{n})_e+\sum_{e\in \Gamma}(\bm{\sigma}\bm{n}\cdot\bm{t},\bm{w}\cdot\bm{t})_e\\
%&=-\sum_{e\in \Gamma}(p,\bm{w}\cdot\bm{n})_e-\sum_{e\in \Gamma}(\bm{u}_s\cdot\bm{t},\bm{w}\cdot\bm{t})_e
%\end{align*}
%
%
%\begin{align*}
%(\tdiv \bm{u}_D,q)=\sum_{e\in \mathcal{F}_{h,D}}(\avg{\bm{u}_D\cdot\bm{n}},\jump{q})_e+\sum_{e\in \Gamma}(\bm{u}_S\cdot\bm{n},q)_e-(\bm{u}_D,\nabla_h q)
%\end{align*}
%
Now we are ready to derive the discrete formulation based on the first order system \eqref{eq:Darcy1}-\eqref{eq:Darcy2} and \eqref{eq:first1}-\eqref{eq:first2}. Multiplying \eqref{eq:stokes1} by a test function $\bm{v}_S\in U_h^S$ and applying integration by parts yield
\begin{align*}
-(\tdiv \bm{\sigma},\bm{v}_S)_{\Omega_S}&=-\sum_{e\in \mathcal{F}_{h,S}^0}(\avg{\bm{\sigma}\bm{n}},\jump{\bm{v}_S})_e-\sum_{e\in \mathcal{F}_{h,\Gamma}}(\bm{\sigma}\bm{n}_S,\bm{v}_S)_e+(\bm{\sigma}, \varepsilon_h(\bm{v}_S))_{\Omega_S}.
\end{align*}
where the second term can be recast into the following form by using \eqref{eq:interface22} and \eqref{eq:interface32}
\begin{align*}
-\sum_{e\in \mathcal{F}_{h,\Gamma}}(\bm{\sigma}\bm{n}_S,\bm{v}_S)_e&=-\sum_{e\in \mathcal{F}_{h,\Gamma}}(\bm{\sigma}\bm{n}_S,(\bm{v}_S\cdot\bm{n}_S)\bm{n}_S+(\bm{v}_S\cdot\bm{t}_S)\bm{t}_S)_e\\
&=\sum_{e\in \mathcal{F}_{h,\Gamma}}(p_D,\bm{v}_S\cdot\bm{n}_S)_e+\frac{1}{G}\sum_{e\in \mathcal{F}_{h,\Gamma}}(\bm{u}_S\cdot\bm{t}_S,\bm{v}_S\cdot\bm{t}_S)_e.
\end{align*}
Multiplying \eqref{eq:Darcy2} by a test function $\bm{v}_D\in U_h^D$ and performing integration by parts lead to
\begin{align*}
(\nabla p_D,\bm{v}_D)_{\Omega_D}=\sum_{e\in \mathcal{F}_{h,\Gamma}}( \bm{v}_D\cdot\bm{n}_D,p_D)_e+\sum_{e\in \mathcal{F}_{pr,D}^0}(p_D,\jump{\bm{v}_D\cdot\bm{n}})_e-(p_D,\tdiv_h \bm{v}_D)_{\Omega_D}.
\end{align*}
Finally we multiply \eqref{eq:Darcy1} by $q_D\in P_h^D$, then using the interface condition \eqref{eq:interface1} implies that
\begin{align*}
(\tdiv \bm{u}_D,q_D)_{\Omega_D}&=\sum_{e\in \mathcal{F}_{dl,D}}(\bm{u}_D\cdot\bm{n},\jump{q_D})_e+\sum_{e\in \mathcal{F}_{h,\Gamma}}(\bm{u}_D\cdot\bm{n}_D,q_D)_e-(\bm{u}_D,\nabla_h q_D)_{\Omega_D}\\
&=\sum_{e\in \mathcal{F}_{dl,D}}(\bm{u}_D\cdot\bm{n},\jump{q_D})_e-(\bm{u}_D,\nabla_h q_D)_{\Omega_D}
-\sum_{e\in \mathcal{F}_{h,\Gamma}}(\bm{u}_S\cdot\bm{n}_S,q_D)_e.
\end{align*}
Based on the above derivations, we are now in position to define the following bilinear forms, which is instrumental for later use.
\begin{align*}
a_{S}(\bm{\sigma}_h,\bm{v}_S)&=-\sum_{e\in \mathcal{F}_{h,S}^0}(\avg{\bm{\sigma}_h\bm{n}},\jump{\bm{v}_S})_e+(\bm{\sigma}_h, \varepsilon_h( \bm{v}_S))_{\Omega_S}
%b_{1,S}^*(p_S,\bm{v}_S)&=-\sum_{e\in \mathcal{F}_{h,S}}(\avg{p_S},\jump{\bm{v}_S\cdot\bm{n}})_e+(p_S,\nabla_h \cdot\bm{v}_S)_{\Omega_S},\\
%b_{1,S}(\bm{u}_S,q_S)&=\sum_{e\in \mathcal{F}_{h,S}}(\avg{\bm{u}_S\cdot\bm{n}},\jump{q_S})_e+(\bm{u}_S\cdot\bm{n}_S,q_S)_\Gamma-(\bm{u}_S,\nabla_h q_S)_{\Omega_S}
\end{align*}
and
\begin{align*}
b_{D}^*(p_D,\bm{v}_D)&=-\sum_{e\in \mathcal{F}_{pr,D}^0}(p_D,\jump{\bm{v}_D\cdot\bm{n}})_e+(p_D,\tdiv_h \bm{v}_D)_{\Omega_D}-\sum_{e\in \mathcal{F}_{h,\Gamma}}( \bm{v}_D\cdot\bm{n}_D,p_{D})_e,\\
b_{D}(\bm{u}_D,q_D)&=\sum_{e\in \mathcal{F}_{dl,D}}(\bm{u}_D\cdot\bm{n},\jump{q_D})_e-(\bm{u}_D,\nabla_h q_D)_{\Omega_D}.
\end{align*}
It follows from integration by parts that
\begin{align*}
a_S(\bm{\sigma}_h,\bm{v}_S)=\sum_{e\in \mathcal{F}_{h,S}^0}(\jump{\bm{\sigma}_h\bm{n}}, \avg{\bm{v}_S})_e-(\tdiv_h\underline{\sigma}_h,\bm{v}_S)_{\Omega_S}\quad \forall \bm{v}_S\in U_h^S.
\end{align*}
The discrete formulation reads as follows: Find $(\underline{\sigma}_h,\bm{u}_{S,h})\in \Sigma_h^S\times U_h^S$ and $(\bm{u}_{D,h},p_{D,h})\in U_h^D\times P_h^D$ such that
\begin{align}
&((2\mu)^{-1}\mathcal{A}\underline{\sigma}_h,\underline{w})_{\Omega_S} = a_S(\underline{w},\bm{u}_{S,h})\quad \forall \underline{w}\in \Sigma_h^S,\label{eq:discrete1}\\
&a_S(\underline{\sigma}_h,\bm{v}_S)+(K^{-1}\bm{u}_{D,h},\bm{v}_D)_{\Omega_D}-b_{D}^*(p_{D,h},\bm{v}_D)+\sum_{e\in \mathcal{F}_{h,\Gamma}}( \bm{v}_S\cdot\bm{n}_S,p_{D,h})_e\nonumber\\
&\;+\sum_{e\in \mathcal{F}_{h,S}^0}\gamma h_e^{-1}(\jump{\bm{u}_{S,h}},\jump{\bm{v}_S})_e+\frac{1}{G}\sum_{e\in \mathcal{F}_{h,\Gamma}}(\bm{u}_{S,h}\cdot\bm{t}_S,\bm{v}_S\cdot\bm{t}_S)_e=(\bm{f}_S,\bm{v}_S)_{\Omega_S}\quad \forall \bm{v}_S\in U_h^S,\label{eq:discrete2}\\
&b_{D}(\bm{u}_{D,h},q_D)-\sum_{e\in \mathcal{F}_{h,\Gamma}}(\bm{u}_{S,h}\cdot\bm{n}_S,q_D)_e=(f_D,q_D)_{\Omega_D}\quad \forall q_D\in P_h^D,\label{eq:discrete3}
\end{align}
where $\gamma>0$ is a constant over each edge $e\in \mathcal{F}_{h,S}^0$.

Integration by parts implies the following discrete adjoint property
\begin{align}
b_{D}(\bm{v}_D,q_D)&=b_{D}^*(q_D, \bm{v}_D)\quad \forall (\bm{v}_D,q_D)\in U_h^D\times P_h^D\label{eq:adjointD2}.
\end{align}

To facilitate later analysis, we define the following mesh-dependent norm/semi-norm
\begin{align*}
\|\bm{v}_D\|_{0,h}^2&=\|\bm{v}_D\|_{0,\Omega_D}^2+\sum_{e\in \mathcal{F}_{dl,D}}h_e\|\bm{v}_D\cdot\bm{n}\|_{0,e}^2,\\
\|q_D\|_{Z}^2&=\|\nabla_hq_D\|_{0,\Omega_D}^2+\sum_{e\in \mathcal{F}_{dl,D}}h_e^{-1}\|\jump{q_D}\|_{0,e}^2,\\
\|\bm{v}_S\|_{1,h}^2&=\|\nabla_h \bm{v}_S\|_{0,\Omega_S}^2+\sum_{e\in \mathcal{F}_{h,S}^0}h_e^{-1}\|\jump{\bm{v}_S}\|_{0,e}^2
\end{align*}
for any $(\bm{v}_D,q_D,\bm{v}_S)\in U_h^D\times P_h^D\times U_h^S$.

Following \cite{Chung09,Chung13}, we have the following inf-sup condition
\begin{align}
\|q_D\|_{Z}\leq C \sup_{\bm{v}_D\in U_h^D}\frac{b_{D}(\bm{v}_D,q_D)}{\|\bm{v}_D\|_{0,\Omega_D}}\quad \forall q_D\in P_h^D.\label{eq:inf-sup}
\end{align}

For later use, we also introduce the following space
\begin{align*}
U_h^{\text{RT}}:&=\{\bm{v}\in H(\text{div};\Omega_S),\bm{v}|_T\in RT_{k-1}, \forall T\in \mathcal{T}_{h,S};\bm{v}\cdot\bm{n}=0\;\mbox{on}\;\Gamma_S\},
\end{align*}
where $H(\text{div};\Omega_S)=\{\bm{v}: \bm{v}\in L^2(\Omega_S)^2, \tdiv \bm{v}\in L^2(\Omega_S)\}$ and $RT_k(T)$ is the Raviart-Thomas element of index $k$ introduced in \cite{RaviartThoms77}.

\begin{remark}
The scheme is locally mass conservative. Indeed, if one choses the test function in \eqref{eq:discrete2} such that $\bm{v}_S=\bm{1}$ on $T$ belonging to $\mathcal{T}_{h,S}$ excluding the elements having the edge lying on $\Gamma$ and zeros otherwise, we have
\begin{align*}
\int_{e}\avg{\underline{\sigma}_h \bm{n}}&=\int_T \bm{f}_S\;dx\quad \forall e\in \mathcal{F}_{h,S}^0 \;\textnormal{and}\;e\subset \partial T.
\end{align*}
In addition, if we take the test function in \eqref{eq:discrete3} such that $q_D=1$ on $D(e)$ for all $e\in \mathcal{F}_{pr,D}^0$ and zeros otherwise, we have
\begin{align*}
\int_{\partial D(e)}\bm{u}_D\cdot\bm{n}&=\int_{D(e)} f_D\;dx\quad \forall e\in \mathcal{F}_{pr,D}^0.
\end{align*}

\end{remark}

\begin{theorem}(unique solvability).
There exists a unique solution to \eqref{eq:discrete1}-\eqref{eq:discrete2}.
\end{theorem}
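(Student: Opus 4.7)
The plan is to exploit the square, linear, finite-dimensional nature of the system and reduce to uniqueness: assume $\bm{f}_S=\bm{0}$ and $f_D=0$ and deduce that $\underline{\sigma}_h=\underline{0}$, $\bm{u}_{S,h}=\bm{0}$, $\bm{u}_{D,h}=\bm{0}$, and $p_{D,h}=0$.

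The first step is an energy identity. I would take $\underline{w}=\underline{\sigma}_h$ in \eqref{eq:discrete1}, $(\bm{v}_S,\bm{v}_D)=(\bm{u}_{S,h},\bm{u}_{D,h})$ in \eqref{eq:discrete2}, and $q_D=p_{D,h}$ in \eqref{eq:discrete3}, then add. The $b_D^\ast(p_{D,h},\bm{u}_{D,h})$ term from \eqref{eq:discrete2} cancels against $b_D(\bm{u}_{D,h},p_{D,h})$ from \eqref{eq:discrete3} via the adjoint property \eqref{eq:adjointD2}, and simultaneously the two interface integrals $\sum_{\Gamma}(\bm{u}_{S,h}\cdot\bm{n}_S,p_{D,h})_e$ cancel. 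The $a_S$ terms cancel using \eqref{eq:discrete1}. This leaves
\begin{equation*}
((2\mu)^{-1}\mathcal{A}\underline{\sigma}_h,\underline{\sigma}_h)_{\Omega_S}+(K^{-1}\bm{u}_{D,h},\bm{u}_{D,h})_{\Omega_D}+\sum_{e\in\mathcal{F}_{h,S}^0}\gamma h_e^{-1}\|\jump{\bm{u}_{S,h}}\|_{0,e}^2+\frac{1}{G}\sum_{e\in\mathcal{F}_{h,\Gamma}}\|\bm{u}_{S,h}\cdot\bm{t}_S\|_{0,e}^2=0.
\end{equation*}
Non-negativity of each term yields $\bm{u}_{D,h}=\bm{0}$, $\jump{\bm{u}_{S,h}}=\bm{0}$ across every interior edge in $\mathcal{F}_{h,S}^0$, $\bm{u}_{S,h}\cdot\bm{t}_S=0$ on $\Gamma$, and $\mathcal{A}\underline{\sigma}_h=\underline{0}$, i.e., $\underline{\sigma}_h=qI$ for some scalar $q\in P_{k-1}(T)$ on each $T\in\mathcal{T}_{h,S}$.

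Next I would exploit the test equations one at a time to kill the remaining unknowns. Going back to \eqref{eq:discrete1} with arbitrary $\underline{w}\in\Sigma_h^S$ and using $\mathcal{A}\underline{\sigma}_h=\underline{0}$ together with the vanishing jumps, I get $(\underline{w},\varepsilon_h(\bm{u}_{S,h}))_{\Omega_S}=0$; choosing $\underline{w}=\varepsilon_h(\bm{u}_{S,h})\in\Sigma_h^S$ (symmetric, piecewise $P_{k-1}$) gives $\varepsilon_h(\bm{u}_{S,h})=\underline{0}$. Since $\bm{u}_{S,h}$ is now globally continuous (all jumps vanish) and $\bm{u}_{S,h}|_{\Gamma_S}=\bm{0}$, Korn's inequality forces $\bm{u}_{S,h}=\bm{0}$ on the connected $\Omega_S$. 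Now \eqref{eq:discrete2} with $\bm{v}_S=\bm{0}$ and $\bm{v}_D\in U_h^D$ arbitrary, combined with $\bm{u}_{D,h}=\bm{0}$, reduces to $b_D^\ast(p_{D,h},\bm{v}_D)=0$; the adjoint property \eqref{eq:adjointD2} and the inf-sup condition \eqref{eq:inf-sup} then give $\|p_{D,h}\|_Z=0$, and since the jumps of $p_{D,h}$ across both primal and dual edges vanish, $p_{D,h}$ is a global constant, which by $p_{D,h}|_{\Gamma_D}=0$ must be zero.

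The hard part, which I expect to be the main obstacle, is showing that the kernel component $\underline{\sigma}_h=qI$ actually vanishes. I would use \eqref{eq:discrete2} with $\bm{v}_D=\bm{0}$, $\bm{u}_{S,h}=\bm{0}$, and $p_{D,h}=0$, which yields $a_S(qI,\bm{v}_S)=0$ for every $\bm{v}_S\in U_h^S$. Writing
\begin{equation*}
a_S(qI,\bm{v}_S)=(qI,\varepsilon_h(\bm{v}_S))_{\Omega_S}-\sum_{e\in\mathcal{F}_{h,S}^0}(\avg{q}\bm{n},\jump{\bm{v}_S})_e=(q,\tdiv_h\bm{v}_S)_{\Omega_S}-\sum_{e\in\mathcal{F}_{h,S}^0}(\avg{q},\jump{\bm{v}_S\cdot\bm{n}})_e,
\end{equation*}
and applying elementwise integration by parts to the first term, the $\avg{q}\jump{\bm{v}_S\cdot\bm{n}}$ contributions cancel, leaving
\begin{equation*}
-(\nabla_h q,\bm{v}_S)_{\Omega_S}+\sum_{e\in\mathcal{F}_{h,S}^0}(\jump{q},\avg{\bm{v}_S\cdot\bm{n}})_e+\sum_{e\in\mathcal{F}_{h,\Gamma}}(q,\bm{v}_S\cdot\bm{n}_S)_e=0.
\end{equation*}
A three-step test-function argument finishes: first, $\bm{v}_S$ supported in a single element with vanishing edge traces gives $\nabla q=0$ elementwise; second, $\bm{v}_S$ with prescribed $\avg{\bm{v}_S\cdot\bm{n}}$ on an interior edge (and zero elsewhere) gives $\jump{q}=0$ on every $e\in\mathcal{F}_{h,S}^0$, so $q$ is a global constant on $\Omega_S$; third, choosing $\bm{v}_S$ with $\int_\Gamma \bm{v}_S\cdot\bm{n}_S\neq 0$ (possible since $\bm{v}_S$ vanishes only on $\Gamma_S$) forces $q=0$. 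Hence $\underline{\sigma}_h=\underline{0}$, completing the uniqueness argument.
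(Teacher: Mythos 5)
Your overall architecture matches the paper's proof: reduce to uniqueness of the square linear system, derive the energy identity by testing with the solution itself (using the adjoint property \eqref{eq:adjointD2} to cancel the $b_D$/$b_D^*$ and interface terms), conclude $\mathcal{A}\underline{\sigma}_h=\underline{0}$, $\bm{u}_{D,h}=\bm{0}$ and vanishing jumps, then use $\varepsilon_h(\bm{u}_{S,h})=\underline{0}$ plus the discrete Korn inequality to get $\bm{u}_{S,h}=\bm{0}$, and the inf-sup condition \eqref{eq:inf-sup} to get $p_{D,h}=0$. All of that is correct and essentially identical to the paper.

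The gap is in your treatment of the kernel component $\underline{\sigma}_h=qI$ with $q|_T\in P_{k-1}(T)$, which you yourself flag as the hard part. Your identity
\begin{equation*}
-(\nabla_h q,\bm{v}_S)_{\Omega_S}+\sum_{e\in\mathcal{F}_{h,S}^0}(\jump{q},\avg{\bm{v}_S\cdot\bm{n}})_e+\sum_{e\in\mathcal{F}_{h,\Gamma}}(q,\bm{v}_S\cdot\bm{n}_S)_e=0
\end{equation*}
is correct, but the first step of your three-step localization fails. A function $\bm{v}_S\in P_k(T)^2$ vanishing on all of $\partial T$ must be of the form $b_T\bm{p}$ with $b_T=\lambda_1\lambda_2\lambda_3$ and $\bm{p}\in P_{k-3}(T)^2$; this space is trivial for $k=1,2$, and for $k\geq 3$ the condition $(\nabla q,b_T\bm{p})_T=0$ for all $\bm{p}\in P_{k-3}(T)^2$ does not force $\nabla q=0$, because $\nabla q\in P_{k-2}(T)^2$ and $\dim P_{k-2}>\dim P_{k-3}$, so $\nabla q$ may lie in the nontrivial kernel of the weighted pairing. (Enlarging the test space to fields with only vanishing \emph{normal} traces does not help: the extra functions are of curl type and pair to zero with any gradient after integration by parts.) Since steps two and three of your argument presuppose that the volume term has already been eliminated, the whole chain breaks. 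The paper sidesteps localization entirely: it restricts the test functions to the Raviart--Thomas subspace $U_h^{\text{RT}}\subset U_h^S$, for which the normal jumps vanish identically, obtains $(q,\tdiv\,\bm{\theta})_{\Omega_S}=0$ for all $\bm{\theta}\in U_h^{\text{RT}}$, and invokes the surjectivity of the divergence (the inf-sup condition for the $RT_{k-1}$--$P_{k-1}$ pair, with no mean-zero constraint on $q$ because $\bm{\theta}\cdot\bm{n}$ is unconstrained on $\Gamma$) to conclude $q=0$. You should replace your three-step argument with this global inf-sup argument, or with some other device that does not rely on elementwise bubble functions.
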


\begin{proof}

\eqref{eq:discrete1}-\eqref{eq:discrete2} is a square linear system, uniqueness implies existences, thus, it suffices to show the uniqueness.
To prove the uniqueness, we set $\bm{f}_S=\bm{0}$ and $f_D=0$. Then taking $\underline{w}=\underline{\sigma}_h$ and $\bm{v}_S=\bm{u}_{S,h}$ in \eqref{eq:discrete1}-\eqref{eq:discrete2} and summing up the resulting equations yield
\begin{align*}
\|(2\mu)^{-\frac{1}{2}}\mathcal{A}\underline{\sigma}_h\|_{0,\Omega_S}^2+\|K^{-\frac{1}{2}}\bm{u}_{D,h}\|_{0,\Omega_D}^2+\sum_{e\in \mathcal{F}_{h,S}^0}\gamma h_e^{-1}\|\jump{\bm{u}_{S,h}}\|_{0,e}^2
+\frac{1}{G}\sum_{e\in \mathcal{F}_{h,\Gamma}}\|\bm{u}_{S,h}\cdot\bm{t}_S\|_{0,e}^2=0,
\end{align*}
which implies that $\jump{\bm{u}_{S,h}}\mid_e=\bm{0}$ for any $e\in \mathcal{F}_{h,S}^0$, $\mathcal{A}\underline{\sigma}_h=\underline{0}$ and $\bm{u}_{D,h}=\bm{0}$.
Since $\jump{\bm{u}_{S,h}}\mid_e=0$ for any $e\in \mathcal{F}_{h,S}^0$, we have from the definition of $a_S(\cdot,\cdot)$ that
\begin{align*}
a_S(\underline{w},\bm{u}_{S,h})=(\underline{w},\varepsilon_h(\bm{u}_{S,h}))_{\Omega_S}=0.
\end{align*}
Taking $\underline{w}=\varepsilon_h(\bm{u}_{S,h})$ implies that $\varepsilon_h(\bm{u}_{S,h})=\underline{0}$. Then the discrete Korn's inequality (cf. \cite{Brenner03}) gives $\bm{u}_{S,h}=\bm{0}$.

On the other hand we have $\mathcal{A}\underline{\sigma}_h=\underline{0}$, which indicates that we can express $\underline{\sigma}_h$ as
\begin{align*}
\underline{\sigma}_h=\psi I\quad \forall \psi\in P_{k-1}(T).
\end{align*}
Thereby, we can obtain from \eqref{eq:discrete2} that
\begin{align*}
a_S(\underline{\sigma}_h,\bm{v})=\sum_{T\in \mathcal{T}_{h,S}} (\psi,\tdiv\bm{v})_T-\sum_{e\in \mathcal{F}_{h,S}^0}(\jump{\bm{v}\cdot\bm{n}},\avg{\psi})_e=0\quad \forall \bm{v}\in U_h^S.
\end{align*}
Let $\bm{\theta}\in U_h^{\text{RT}}\subset U_h^S$ and set $\bm{v}:=\bm{\theta}$, we have
\begin{align*}
(\psi,\text{div}\,\bm{\theta})_{\Omega_S}=0.
\end{align*}
It follows from the inf-sup condition (cf. \cite{Boffi13}) that
\begin{align*}
\frac{(\psi,\text{div}\,\bm{\theta})_{\Omega_S}}{(\|\bm{\theta}\|_{0,\Omega_S}^2+
\|\text{div}\,\bm{\theta}\|_{0,\Omega_S}^2)^{1/2}}\geq C \|\psi\|_{0,\Omega_S},
\end{align*}
which implies $\psi=0$, thereby $\underline{\sigma}_h=\underline{0}$.

Finally, we have from the inf-sup condition \eqref{eq:inf-sup}, the discrete adjoint property \eqref{eq:adjointD2}, the discrete Poincar\'{e} inequality (cf. \cite{Brenner03}) and \eqref{eq:discrete2} that
\begin{align*}
\|p_{D,h}\|_{0,\Omega_D}\leq C \|p_{D,h}\|_{Z}\leq C\|K^{-1}\bm{u}_{D,h}\|_{0,\Omega_D},
\end{align*}
which implies that $p_{D,h}=0$.
Therefore, the proof is completed.

\end{proof}

\section{A priori error estimates}\label{sec:priori}

In this section we will present the convergence error estimates for all the variables. To begin, we define the following interpolation operators, which play an important role for the subsequent analysis.
%
%We let $\Pi_h^{\text{BDM}}: H(\text{div};\Omega_S)\cap L^p(\Omega_S)\rightarrow U_h^S, p>2$ denote the $BD$ interpolation operator (cf. \cite{Brezzi85}),
%\begin{align*}
%((\bm{v}-\Pi_h^{\text{BDM}}\bm{v})\cdot \bm{n},p_k)_e&=0\quad \forall p_k\in P^k(e) \;\text{and each edge}\; e\subset \partial \tau,\tau\in \mathcal{T}_{h,S},\\
%(\bm{v}-\Pi_h^{\text{BDM}}\bm{v},\nabla p_{k-1})_\tau&=0\quad \forall p_{k-1}\in P^{k-1}(\tau),\tau\in \mathcal{T}_{h,S},\\
%(\bm{v}-\Pi_h^{\text{BDM}}\bm{v},\mbox{curl}\, b)_\tau&=0\quad \forall b\in B^{k+1}(\tau),\tau\in \mathcal{T}_{h,S},
%\end{align*}
%where $B^{k+1}(\tau)=\{p\in P^{k+1}(\tau);\; p\mid_{\partial \tau}=0\}=\lambda_1\lambda_2\lambda_3 P^{k-2}(\tau)$. Here $\lambda_i,\;i=1,2,3$ are the barycentric coordinates of $\tau$.
We let $\Pi_h^{\text{BDM}}$ denote the BDM projection operator (cf. \cite{Brezzi85}),
which satisfies the following error estimates for $0\leq \alpha\leq k+1$ (see, e.g., \cite{Brezzi85,Duran88})
\begin{align*}
\|\bm{v}-\Pi_h^{\text{BDM}}\bm{v}\|_{\alpha,\Omega_S}&\leq C h^{k+1-\alpha}\|\bm{v}\|_{k+1,\Omega_S}\quad \forall \bm{v}\in H^{k+1}(\Omega_S)^2.
%\|\nabla \cdot (\bm{v}-\Pi_h^{\text{BDM}}\bm{v})\|_{0,\Omega_S}&\leq C h^{k}\|\nabla \cdot\bm{v}\|_{k,\Omega_S}\quad \forall \bm{v}\in H^{k}(\text{div};\Omega_S).\label{eq:divBDM}
\end{align*}
Let $\mathcal{I}_h: H^1(\Omega_D)\rightarrow P_h^D$ be defined by
\begin{equation*}
\begin{split}
	(\mathcal{I}_h q-q,\phi)_e
		&=0 \quad \forall \phi\in P^k(e),\forall e\in \mathcal{F}_{pr,D},\\
	(\mathcal{I}_hq-q,\phi)_T
		&=0\quad \forall \phi\in P^{k-1}(T),\forall T\in \mathcal{T}_{h,D}
\end{split}
\end{equation*}
and $\mathcal{J}_h: L^2(\Omega_D)^2\cap H^{1/2+\delta}(\Omega_D)^2\rightarrow U_h^D$, $\delta>0$ be defined by
\begin{equation*}
\begin{split}
	((\mathcal{J}_h\bm{v}-\bm{v})\cdot\bm{n},\varphi)_e
		&=0\quad \forall \varphi\in P^{k}(e),\forall e\in \mathcal{F}_{dl,D},\\
	(\mathcal{J}_h\bm{v}-\bm{v}, \bm{\phi})_T
		&=0\quad \forall \bm{\phi}\in P^{k-1}(T)^2, \forall T\in \mathcal{T}_{h,D}.
\end{split}
\end{equation*}
It is easy to see that $\mathcal{I}_h$ and $\mathcal{J}_h$ are well defined polynomial preserving operators. In addition, the following approximation properties hold for $q\in H^{k+1}(\Omega_D)$ and $\bm{v}\in H^{k+1}(\Omega_D)^2$ (cf. \cite{Ciarlet78,Chung09})
\begin{align}
\|q-\mathcal{I}_hq\|_{\alpha,\Omega_D}&\leq C h^{k+1-\alpha}|q|_{k+1,\Omega_D}\quad 0\leq \alpha\leq k+1,\label{eq:Perror}\\
\|\bm{v}-\mathcal{J}_h\bm{v}\|_{\alpha,\Omega_D}&\leq C h^{k+1-\alpha}|\bm{v}|_{k+1,\Omega_D}\quad 0\leq \alpha\leq k+1\label{eq:uerror}.
\end{align}
Finally, we let $\Pi_h$ denote the $L^2$ orthogonal projection onto $\Sigma_h^S$, then it holds
\begin{align}
\|\underline{w}-\Pi_h\underline{w}\|_{\alpha,\Omega_S}\leq C h^{k-\alpha}\|\underline{w}\|_{k,\Omega_S}\quad \forall \underline{w}\in H^{k}(\Omega_S)^{2\times 2}, 0\leq \alpha\leq k.\label{eq:sigmaerror}
\end{align}

Performing integration by parts on the discrete formulation \eqref{eq:discrete1}-\eqref{eq:discrete3} and using the interface conditions \eqref{eq:interface1}-\eqref{eq:interface32}, we can get the following error equations:
\begin{align}
&((2\mu)^{-1}\mathcal{A}(\underline{\sigma}-\underline{\sigma}_h),\underline{w})_{\Omega_S} = a_S(\underline{w},\bm{u}_S-\bm{u}_{S,h})\quad \underline{w}\in \Sigma_h^S\label{eq:error1},\\
&a_S(\underline{\sigma}-\underline{\sigma}_h,\bm{v}_S)+(K^{-1}(\bm{u}_D-\bm{u}_{D,h}),\bm{v}_D)_{\Omega_D} -b_{D}^*(p_D-p_{D,h},\bm{v}_D)+\sum_{e\in \mathcal{F}_{h,\Gamma}}( \bm{v}_S\cdot\bm{n}_S,p_D-p_{D,h})_e\nonumber\\
&\;+\sum_{e\in \mathcal{F}_{h,S}^0}\gamma h_e^{-1}(\jump{\bm{u}_S-\bm{u}_{S,h}},\jump{\bm{v}_S})_e+\frac{1}{G}\sum_{e\in \mathcal{F}_{h,\Gamma}}((\bm{u}_S-\bm{u}_{S,h})\cdot\bm{t}_S,\bm{v}_S\cdot\bm{t}_S)_e=0\quad \forall \bm{v}_S\in U_h^S\label{eq:error2},\\
&b_{D}(\bm{u}_D-\bm{u}_{D,h},q_D)-\sum_{e\in \mathcal{F}_{h,\Gamma}}((\bm{u}_S-\bm{u}_{S,h})\cdot\bm{n}_S,q_D)_e=0\quad \forall q_D\in P_h^D,\label{eq:error3}
\end{align}
which indicates that our discrete formulation is consistent.

\begin{theorem}\label{thm:sigmaL2}
Let $(\underline{\sigma},\bm{u}_S)\in H^{k}(\Omega_S)^{2\times 2}\times H^{k+1}(\Omega_S)^2$ and $ \bm{u}_D\in H^{k+1}(\Omega_D)^2$. In addition, let $(\underline{\sigma}_h,\bm{u}_{S,h})\in \Sigma_h^S\times U_h^S$ and $\bm{u}_{D,h}\in U_h^D$ be the discrete solution of \eqref{eq:discrete1}-\eqref{eq:discrete3}. Then the following convergence error estimate holds
\begin{align*}
&\|\mu^{-\frac{1}{2}}\mathcal{A}(\underline{\sigma}-\underline{\sigma}_h)\|_{0,\Omega_S}^2
+\|K^{-\frac{1}{2}}(\bm{u}_D-\bm{u}_{D,h})\|_{0,\Omega_D}^2+\sum_{e\in \mathcal{F}_{h,S}^0}\gamma h_e^{-1}\|\jump{\bm{u}_S-\bm{u}_{S,h}}\|_{0,e}^2\\
&\;+\frac{1}{G}\sum_{e\in \mathcal{F}_{h,\Gamma}}\|(\bm{u}_S-\bm{u}_{S,h})\cdot\bm{t}_S\|_{0,e}^2
\leq C \Big(h^{2(k+1)}\|\bm{u}_{D}\|_{k+1,\Omega_D}^2+h^{2k}\|\underline{\sigma}\|_{k,\Omega_S}^2+h^{2k}\|\bm{u}_S\|_{k+1,\Omega_S}^2\Big).
\end{align*}

\end{theorem}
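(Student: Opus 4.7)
The plan is a projection-then-test energy argument. The main subtlety is that the target energy norm controls only the deviatoric part $\mathcal{A}\underline{e}_\sigma$ and not the full discrete stress error $\underline{e}_\sigma$, so every occurrence of $\underline{e}_\sigma$ on the right-hand side must eventually be reduced to its deviatoric part. First I would decompose each error into an interpolation part and a discrete part by setting $\eta_\sigma:=\underline{\sigma}-\Pi_h\underline{\sigma}$, $\underline{e}_\sigma:=\Pi_h\underline{\sigma}-\underline{\sigma}_h$, and analogously $(\eta_{u_S},\bm{e}_{u_S})$ via $\Pi_h^{\mathrm{BDM}}$, $(\eta_{u_D},\bm{e}_{u_D})$ via $\mathcal{J}_h$, and $(\eta_{p_D},e_{p_D})$ via $\mathcal{I}_h$. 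Testing \eqref{eq:error1} with $\underline{w}=\underline{e}_\sigma$, \eqref{eq:error2} with $(\bm{v}_S,\bm{v}_D)=(\bm{e}_{u_S},\bm{e}_{u_D})$ and \eqref{eq:error3} with $q_D=e_{p_D}$, then summing the latter two and invoking the adjoint property \eqref{eq:adjointD2} (which cancels the $b_D^*/b_D$ pairing as well as the discrete interface coupling $(\bm{e}_{u_S}\cdot\bm{n}_S,e_{p_D})_e$), and finally substituting $a_S(\underline{e}_\sigma,\bm{e}_{u_S})$ from \eqref{eq:error1} together with the pointwise identity $(\mathcal{A}\eta_\sigma,\underline{e}_\sigma)_{\Omega_S}=(\mathcal{A}\eta_\sigma,\mathcal{A}\underline{e}_\sigma)_{\Omega_S}$ (valid because $\mathcal{A}\eta_\sigma$ is trace-free), yields an identity whose left-hand side is exactly the target energy norm and whose right-hand side is a sum of cross terms pairing interpolation errors with discrete errors.

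Most of these cross terms are killed by the orthogonalities built into the projections. The moment definitions of $\mathcal{I}_h$ and $\mathcal{J}_h$, together with the normal-moment property of $\Pi_h^{\mathrm{BDM}}$, give $b_D^*(\eta_{p_D},\bm{e}_{u_D})=b_D(\eta_{u_D},e_{p_D})=(\bm{e}_{u_S}\cdot\bm{n}_S,\eta_{p_D})_e=(\eta_{u_S}\cdot\bm{n}_S,e_{p_D})_e=0$, since in each case the polynomial trace of the discrete-error factor lies in the annihilator of the interpolation error. For $a_S(\eta_\sigma,\bm{e}_{u_S})$ the volume piece $(\eta_\sigma,\varepsilon_h(\bm{e}_{u_S}))_{\Omega_S}$ vanishes by $L^2$-projection orthogonality (because $\varepsilon_h(\bm{e}_{u_S})|_T\in\Sigma_h^S|_T$), leaving only an edge remainder $-\sum_{e\in\mathcal{F}_{h,S}^0}(\avg{\eta_\sigma\bm{n}},\jump{\bm{e}_{u_S}})_e$. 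The surviving terms $(\mathcal{A}\eta_\sigma,\mathcal{A}\underline{e}_\sigma)_{\Omega_S}$, $(K^{-1}\eta_{u_D},\bm{e}_{u_D})_{\Omega_D}$, the edge remainder just mentioned, the two penalty cross terms coming from the interior jump and interface tangential penalizations, and the critical term $a_S(\underline{e}_\sigma,\eta_{u_S})$ are all handled by Cauchy-Schwarz together with \eqref{eq:Perror}--\eqref{eq:sigmaerror} and the standard local trace inequality, followed by Young's inequality to absorb the discrete-error factor into the coercive left-hand side, except for the last one.

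The genuinely delicate term is $a_S(\underline{e}_\sigma,\eta_{u_S})=(\underline{e}_\sigma,\varepsilon_h(\eta_{u_S}))_{\Omega_S}-\sum_{e\in\mathcal{F}_{h,S}^0}(\avg{\underline{e}_\sigma\bm{n}},\jump{\eta_{u_S}})_e$, where a direct estimate would require $\|\underline{e}_\sigma\|_{0,\Omega_S}$, which is not controlled by our energy norm. Two structural facts about $\eta_{u_S}$ rescue the estimate. First, the BDM commuting diagram applied to the divergence-free $\bm{u}_S$ yields $\tdiv_h\eta_{u_S}=0$; decomposing $\underline{e}_\sigma=\mathcal{A}\underline{e}_\sigma+\tfrac12\mathrm{tr}(\underline{e}_\sigma)I$ we find $\bigl(\tfrac12\mathrm{tr}(\underline{e}_\sigma)I,\varepsilon_h(\eta_{u_S})\bigr)_{\Omega_S}=\tfrac12(\mathrm{tr}(\underline{e}_\sigma),\tdiv_h\eta_{u_S})_{\Omega_S}=0$, so the volume contribution reduces to $(\mathcal{A}\underline{e}_\sigma,\varepsilon_h(\eta_{u_S}))_{\Omega_S}$. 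Second, since $\Pi_h^{\mathrm{BDM}}\bm{u}_S\in H(\mathrm{div};\Omega_S)$ and $\bm{u}_S$ is continuous, $\jump{\eta_{u_S}}$ is purely tangential on every interior edge, so $\avg{\underline{e}_\sigma\bm{n}}\cdot\jump{\eta_{u_S}}=\avg{\mathcal{A}\underline{e}_\sigma\bm{n}}\cdot\jump{\eta_{u_S}}$, because $\tfrac12\mathrm{tr}(\underline{e}_\sigma)I\bm{n}=\tfrac12\mathrm{tr}(\underline{e}_\sigma)\bm{n}$ is parallel to $\bm{n}$ and therefore orthogonal to $\bm{t}$. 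A discrete trace inequality applied to $\mathcal{A}\underline{e}_\sigma$ combined with the BDM approximation then gives $|a_S(\underline{e}_\sigma,\eta_{u_S})|\leq Ch^k\|\bm{u}_S\|_{k+1,\Omega_S}\|\mathcal{A}\underline{e}_\sigma\|_{0,\Omega_S}$, which Young's inequality absorbs. The stated bound then follows by triangle inequality with the interpolation estimates; the hard part, as indicated, is the trace-free reduction used to bypass the absent control on $\|\underline{e}_\sigma\|_{0,\Omega_S}$.
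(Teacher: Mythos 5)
Your proposal is correct and follows essentially the same route as the paper: the same choice of test functions $(\Pi_h\underline{\sigma}-\underline{\sigma}_h,\ \Pi_h^{\mathrm{BDM}}\bm{u}_S-\bm{u}_{S,h},\ \mathcal{J}_h\bm{u}_D-\bm{u}_{D,h},\ \mathcal{I}_hp_D-p_{D,h})$, the same cancellations from the moment properties of $\mathcal{I}_h$, $\mathcal{J}_h$, $\Pi_h^{\mathrm{BDM}}$ and the adjoint identity \eqref{eq:adjointD2}, and the same key idea of reducing the uncontrolled $\underline{e}_\sigma$ in $a_S(\underline{e}_\sigma,\eta_{u_S})$ to its deviatoric part $\mathcal{A}\underline{e}_\sigma$. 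The only (harmless) differences are that you work with the non-integrated-by-parts form of $a_S$ for that critical term — using $\tdiv_h\eta_{u_S}=0$ and the pure tangentiality of $\jump{\eta_{u_S}}$ instead of the paper's integration-by-parts plus BDM edge-moment argument — and that you estimate $(\mathcal{A}\eta_\sigma,\mathcal{A}\underline{e}_\sigma)_{\Omega_S}$ by Cauchy--Schwarz where the paper's $L^2$-orthogonality of $\Pi_h$ makes it vanish outright.
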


\begin{proof}

Taking $\underline{w}=\Pi_h\underline{\sigma}-\underline{\sigma}_h$, $\bm{v}_S=\Pi_h^{\text{BDM}} \bm{u}_S-\bm{u}_{S,h}$,
$\bm{v}_D=\mathcal{J}_h\bm{u}_D-\bm{u}_{D,h}$ and $q_D=\mathcal{I}_h p_D-p_{D,h}$ in \eqref{eq:error1}-\eqref{eq:error3} and summing up the resulting equations yield
\begin{align*}
&\|(2\mu)^{-\frac{1}{2}}\mathcal{A}(\Pi_h\underline{\sigma}-\underline{\sigma}_h)\|_{0,\Omega_S}^2
+\|K^{-\frac{1}{2}}(\mathcal{J}_h\bm{u}_D-\bm{u}_{D,h})\|_{0,\Omega_D}^2+\sum_{e\in \mathcal{F}_{h,S}^0}\gamma h_e^{-1}\|\jump{\Pi_h^{\text{BDM}}\bm{u}_S-\bm{u}_{S,h}}\|_{0,e}^2\\
&\;+\frac{1}{G}\sum_{e\in \mathcal{F}_{h,\Gamma}}\|(\Pi_h^{\text{BDM}}\bm{u}_S-\bm{u}_{S,h})\cdot\bm{t}_S\|_{0,e}^2=-\sum_{e\in \mathcal{F}_{h,S}^0}\gamma h_e^{-1} (\jump{\bm{u}_S-\Pi_h^{\text{BDM}}\bm{u}_S},\jump{\Pi_h^{\text{BDM}}\bm{u}_S-\bm{u}_{S,h}})_e\\
&\;-\frac{1}{G}\sum_{e\in \mathcal{F}_{h,\Gamma}}((\bm{u}_S-\Pi_h^{\text{BDM}}\bm{u}_S)\cdot\bm{t}_S,(\Pi_h^{\text{BDM}}\bm{u}_S-\bm{u}_{S,h})\cdot\bm{t}_S)_{e}
+a_S(\Pi_h\underline{\sigma}-\underline{\sigma}_h,\bm{u}_S-\Pi_h^{\text{BDM}}\bm{u}_S)\\
&\;-a_S(\underline{\sigma}-\Pi_h\underline{\sigma},\Pi_h^{\text{BDM}}\bm{u}_S-\bm{u}_{S,h})
+(K^{-1}(\mathcal{J}_h\bm{u}_D-\bm{u}_D),\mathcal{J}_h\bm{u}_D-\bm{u}_{D,h})_{\Omega_D}:=\sum_{i=1}^5R_i,
\end{align*}
where we use the definitions of $\mathcal{I}_h$ and $\Pi_h^{\text{BDM}}$, i.e.,
\begin{align*}
\sum_{e\in \mathcal{F}_{h,\Gamma}}((\bm{u}_S-\Pi_h^{\text{BDM}}\bm{u}_{S})\cdot\bm{n}_S,I_hp_D-p_{D,h})_e=0,\\
\sum_{e\in \mathcal{F}_{h,\Gamma}}(\Pi_h^{\text{BDM}}\bm{u}_{S}-\bm{u}_{S,h},p_D-\mathcal{I}_h p_D)_e=0.
\end{align*}
We can bound the first two terms on the right hand side by the Cauchy-Schwarz inequality
\begin{align*}
R_1&\leq C\gamma \Big(\sum_{e\in \mathcal{F}_{h,S}^0}h_e^{-1} \|\jump{\bm{u}_S-\Pi_h^{\text{BDM}}\bm{u}_S}\|_{0,e}^2\Big)^{\frac{1}{2}}
\Big(\sum_{e\in \mathcal{F}_{h,S}^0}h_e^{-1}\|\jump{\Pi_h^{\text{BDM}}\bm{u}_S-\bm{u}_{S,h}}\|_{0,e}^2\Big)^{\frac{1}{2}},\\
R_2&\leq \frac{1}{G}\Big(\sum_{e\in \mathcal{F}_{h,\Gamma}}\|(\bm{u}_S-\Pi_h^{\text{BDM}}\bm{u}_S)\cdot\bm{t}_S\|_{0,e}^2\Big)^{\frac{1}{2}}\Big(\sum_{e\in \mathcal{F}_{h,\Gamma}}\|(\Pi_h^{\text{BDM}}\bm{u}_S-\bm{u}_{S,h})\cdot\bm{t}_S\|_{0,e}^2\Big)^{\frac{1}{2}}.
%R_3&\leq C \|\Pi_h\underline{\sigma}-\underline{\sigma}_h\|_{0,\Omega_S}\|\bm{u}_S-\Pi_h^{\text{BDM}}\bm{u}_S\|_{1,h}.
\end{align*}
Using the definition of $\Pi_h^{\text{BDM}}$, we can infer that
\begin{align*}
a_S(\Pi_h\underline{\sigma}-\underline{\sigma}_h,\bm{u}_S-\Pi_h^{\text{BDM}}\bm{u}_S)&=\sum_{e\in \mathcal{F}_{h,S}^0}(\jump{(\Pi_h\underline{\sigma}-\underline{\sigma}_h)\bm{n}}, \avg{\bm{u}_S-\Pi_h^{\text{BDM}}\bm{u}_S})_e\\
&\;-(\tdiv_h(\Pi_h\underline{\sigma}-\underline{\sigma}_h),\bm{u}_S-\Pi_h^{\text{BDM}}\bm{u}_S)_{\Omega_S}\\
&=\sum_{e\in \mathcal{F}_{h,S}^0}(\jump{(\Pi_h\underline{\sigma}-\underline{\sigma}_h)\bm{n}\cdot\bm{t}}, \avg{(\bm{u}_S-\Pi_h^{\text{BDM}}\bm{u}_S)\cdot\bm{t}})_e\\
&=\sum_{e\in \mathcal{F}_{h,S}^0}(\jump{\mathcal{A}(\Pi_h\underline{\sigma}-\underline{\sigma}_h)\bm{n}\cdot\bm{t}}, \avg{(\bm{u}_S-\Pi_h^{\text{BDM}}\bm{u}_S)\cdot\bm{t}})_e,
\end{align*}
where we use the fact that $ \text{tr}(\Pi_h\underline{\sigma}-\underline{\sigma}_h)\bm{n}\cdot\bm{t}\mid_e=0$ for any $e\in \mathcal{F}_{h,S}^0$.

Thereby, we can estimate $R_3$ by
\begin{equation}
\begin{split}
&a_S(\Pi_h\underline{\sigma}-\underline{\sigma}_h,\bm{u}_S-\Pi_h^{\text{BDM}}\bm{u}_S)\\
&\leq C \|\mathcal{A}(\Pi_h\underline{\sigma}-\underline{\sigma}_h)\|_{0,\Omega_S}\Big(\sum_{e\in \mathcal{F}_{h,S}^0}h_e^{-1}\|\avg{(\bm{u}_S-\Pi_h^{\text{BDM}}\bm{u}_S)\cdot\bm{t}}\|_{0,e}^2\Big)^{\frac{1}{2}}.
\end{split}
\label{eq:R3}
\end{equation}
An application of the Cauchy-Schwarz inequality and the definition of $\Pi_h$ leads to
\begin{align*}
R_4&=-\sum_{e\in \mathcal{F}_{h,S}^0}(\avg{(\underline{\sigma}-\Pi_h\underline{\sigma})\bm{n}},\jump{\Pi_h^{\text{BDM}}\bm{u}_S-\bm{u}_{S,h}})_e
+(\underline{\sigma}-\Pi_h\underline{\sigma}, \nabla_h (\Pi_h^{\text{BDM}}\bm{u}_S-\bm{u}_{S,h}))_{\Omega_S}\\
&=-\sum_{e\in \mathcal{F}_{h,S}^0}(\avg{(\underline{\sigma}-\Pi_h\underline{\sigma})\bm{n}},\jump{\Pi_h^{\text{BDM}}\bm{u}_S-\bm{u}_{S,h}})_e\\
&\leq  \sum_{e\in \mathcal{F}_{h,S}^0}\|\avg{(\underline{\sigma}-\Pi_h\underline{\sigma})\bm{n}}\|_{0,e}\|\jump{\Pi_h^{\text{BDM}}\bm{u}_S-\bm{u}_{S,h}}\|_{0,e}.
%&\leq  \Big(\sum_{e\in \mathcal{F}_{h,s}}h_e\|\avg{(\underline{\sigma}-\Pi_h\underline{\sigma})\bm{n}}\|_{0,e}^2\Big)^{1/2}
%+\epsilon \Big(\sum_{e\in \mathcal{F}_{h,s}}h_e^{-1}\|\jump{\pi_h\bm{u}_S-\bm{u}_{h,S}}\|_{0,e}\Big)^{1/2}
\end{align*}
%
%The inf-sup condition \eqref{eq:inf-sup} and \eqref{eq:error2} imply that
%\begin{align*}
%\|I_hp_D-p_{D,h}\|_Z\leq C \sup_{\bm{v}_D\in U_h^D}\frac{b_{D}^*(I_hp_D-p_{D,h},\bm{v}_D)}{\|\bm{v}_D\|_{0,\Omega_D}}\leq C\|K^{-1}(\bm{u}_D-\bm{u}_{D,h})\|_{0,\Omega_D}.
%\end{align*}
%
%The definition of $\Pi_h^{\text{BDM}}$ implies that
%\begin{align*}
%R_5=0.
%\end{align*}
The Cauchy-Schwarz inequality yields
\begin{align*}
R_5\leq \|K^{-\frac{1}{2}}(\mathcal{J}_h\bm{u}_D-\bm{u}_D)\|_{0,\Omega_D}\|K^{-\frac{1}{2}}(\mathcal{J}_h\bm{u}_D-\bm{u}_{D,h})\|_{0,\Omega_D}.
\end{align*}
%
%Then we can deduce from the trace inequality (cf. \cite{Feng01}) and the discrete Poincar\'{e} inequality (cf. \cite{Brenner03}) that
%\begin{align*}
%\|q\|_{0,\Gamma}\leq C\|q\|_Z.
%\end{align*}
%Consequently, we have
%\begin{align*}
%\sum_{e\in \Gamma_{12}}((\bm{u}_S-\pi_h\bm{u}_{S,h})\cdot\bm{n}_S,I_hp_D-p_{D,h})_e\leq \|\bm{u}_S-\pi_h\bm{u}_{S,h}
%\end{align*}
%
Combining the preceding estimates, Young's inequality,  the interpolation error estimates and the trace inequality implies that
\begin{align*}
&\|\mu^{-\frac{1}{2}}\mathcal{A}(\Pi_h\underline{\sigma}-\underline{\sigma}_h)\|_{0,\Omega_S}^2
+\|K^{-\frac{1}{2}}(\mathcal{J}_h\bm{u}_D-\bm{u}_{D,h})\|_{0,\Omega_D}^2+\sum_{e\in \mathcal{F}_{h,S}^0}\gamma h_e^{-1}\|\jump{\Pi_h^{\text{BDM}}\bm{u}_S-\bm{u}_{S,h}}\|_{0,e}^2\\
&+\frac{1}{G}\sum_{e\in \mathcal{F}_{h,\Gamma}}\|(\Pi_h^{\textnormal{BDM}}\bm{u}_S-\bm{u}_{S,h})\cdot\bm{t}_S\|_{0,e}^2\leq C \Big(h^{2(k+1)}\|\bm{u}_{D}\|_{k+1,\Omega_D}^2+h^{2k}\|\underline{\sigma}\|_{k,\Omega_S}^2+h^{2k}\|\bm{u}_S\|_{k+1,\Omega_S}^2\Big).
\end{align*}
Therefore, the proof is completed by using the interpolation error estimates \eqref{eq:Perror}-\eqref{eq:sigmaerror}.
\end{proof}

We can observe from Theorem~\ref{thm:sigmaL2} that the convergence rate for $L^2$-error of $\bm{u}_D$ is not optimal in terms of the polynomial order. The proof for the optimal convergence rate of $L^2$-error of $\bm{u}_D$ is non-trivial and it relies on a non-standard trace theorem, which will be explained later. In order to achieve the optimal convergence order for $L^2$-error of $\bm{u}_D$, we first need to show the $L^2$ error estimates of $\bm{u}_S$ and $p_D$. To this end, we assume that the following dual problem holds (cf. \cite{Fu18}):
\begin{alignat}{2}
\mbox{div}\,\underline{H}&=\Pi_h^{\text{BDM}}\bm{u}_S-\bm{u}_{S,h}&&\quad \mbox{in}\;\Omega_S,\label{eq:first1-dual}\\
\mathcal{A}\underline{H}&=-2\mu\varepsilon(\bm{\psi}_S)&&\quad \mbox{in}\;\Omega_S\label{eq:first2-dual},\\
\bm{\psi}_S&=\bm{0}&&\quad \mbox{on}\;\Gamma_S
\end{alignat}
and
\begin{alignat}{2}
-\tdiv \bm{\psi}_D&=\mathcal{I}_hp_D-p_{D,h}\quad &&\mbox{in}\; \Omega_D,\label{eq:Darcy1-dual}\\
\bm{\psi}_D-K \nabla \phi&=0&&\mbox{in}\; \Omega_D\label{eq:Darcy2-dual},\\
\phi&=0 &&\mbox{on}\; \Gamma_D.
\end{alignat}
The following interface conditions are prescribed on the interface $\Gamma$
\begin{alignat*}{2}
G\underline{H} \bm{n}_S\cdot\bm{t}_S&=\bm{\psi}_S\cdot\bm{t}_S\quad && \mbox{on}\;\Gamma,\\
\underline{H}\bm{n}_S\cdot\bm{n}_S&=-\phi&& \mbox{on}\;\Gamma,\\
\bm{\psi}_S\cdot\bm{n}_S&=\bm{\psi}_D\cdot\bm{n}_S&& \mbox{on}\;\Gamma.
\end{alignat*}
Assume that the following regularity estimate holds
\begin{align}
\|\bm{\psi}_S\|_{2,\Omega_S}+\|\phi\|_{2,\Omega_D}\leq C \Big(\|\Pi_h^{\text{BDM}}\bm{u}_S-\bm{u}_{S,h}\|_{0,\Omega_S}
+\|\mathcal{I}_hp_D-p_{D,h}\|_{0,\Omega_D}\Big).\label{eq:regularity}
\end{align}

Then we can state the convergence error estimate for $L^2$ errors of $\bm{u}_S$ and $\bm{p}_D$.
\begin{theorem}\label{thm:uL2}
Let $(\underline{\sigma},\bm{u}_S)\in H^{k}(\Omega_S)^{2\times 2}\times H^{k+1}(\Omega_S)^2$ and $ (\bm{u}_D,p_D)\in H^{k+1}(\Omega_D)^2\times H^{k+1}(\Omega_D)$. In addition, let $(\underline{\sigma}_h,\bm{u}_{S,h})\in \Sigma_h^S\times U_h^S$ and $(\bm{u}_{D,h},p_{D,h})\in U_h^D\times p_h^D$ be the discrete solution of \eqref{eq:discrete1}-\eqref{eq:discrete3}. Then the following convergence error estimate holds
\begin{align*}
\|\bm{u}_S-\bm{u}_{S,h}\|_{0,\Omega_S}+\|p_D-p_{D,h}\|_{0,\Omega_D}\leq C h^{k+1} \Big(\|\underline{\sigma}\|_{k,\Omega_S}+\|\bm{u}_S\|_{k+1,\Omega_S}+\|\bm{u}_{D}\|_{k+1,\Omega_D}+\|p_D\|_{k+1,\Omega_D}\Big).
\end{align*}

\end{theorem}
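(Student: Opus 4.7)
My plan is a duality (Aubin--Nitsche) argument built on the auxiliary problem \eqref{eq:first1-dual}--\eqref{eq:Darcy2-dual}. Denote the projected errors by $e_{\underline{\sigma}}:=\Pi_h\underline{\sigma}-\underline{\sigma}_h$, $e_{\bm{u}_S}:=\Pi_h^{\textnormal{BDM}}\bm{u}_S-\bm{u}_{S,h}$, $e_{\bm{u}_D}:=\mathcal{J}_h\bm{u}_D-\bm{u}_{D,h}$ and $e_{p_D}:=\mathcal{I}_hp_D-p_{D,h}$. By the triangle inequality and the interpolation estimates \eqref{eq:Perror}--\eqref{eq:sigmaerror}, it suffices to bound $\|e_{\bm{u}_S}\|_{0,\Omega_S}+\|e_{p_D}\|_{0,\Omega_D}$ by $Ch^{k+1}$ times the stated regularity norms.

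The first step is to use the dual equations \eqref{eq:first1-dual} and \eqref{eq:Darcy1-dual} to obtain
\begin{align*}
\|e_{\bm{u}_S}\|_{0,\Omega_S}^2+\|e_{p_D}\|_{0,\Omega_D}^2=(\mbox{div}\,\underline{H},e_{\bm{u}_S})_{\Omega_S}-(\tdiv\,\bm{\psi}_D,e_{p_D})_{\Omega_D}.
\end{align*}
Then I would integrate by parts element by element, exploiting the symmetry of $\underline{H}$ via $\underline{H}=\mathcal{A}\underline{H}+\tfrac12\mbox{tr}(\underline{H})I$ with $\mathcal{A}\underline{H}=-2\mu\varepsilon(\bm{\psi}_S)$ for the Stokes part, and $\bm{\psi}_D=K\nabla\phi$ for the Darcy part. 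The bulk contributions reorganize into $-a_S(\underline{H},e_{\bm{u}_S})$ and $-b_D^*(\phi,e_{\bm{u}_D})$ respectively (the $K^{-1}$ pairing appears through $\bm{\psi}_D=K\nabla\phi$), and the remaining edge contributions on $\Gamma$ are regrouped using the three dual interface conditions $G\underline{H}\bm{n}_S\cdot\bm{t}_S=\bm{\psi}_S\cdot\bm{t}_S$, $\underline{H}\bm{n}_S\cdot\bm{n}_S=-\phi$, $\bm{\psi}_S\cdot\bm{n}_S=\bm{\psi}_D\cdot\bm{n}_S$ so that they mirror exactly the primal interface terms in \eqref{eq:error2}--\eqref{eq:error3}.

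Next I would invoke the error equations \eqref{eq:error1}--\eqref{eq:error3} tested against $\underline{w}=\Pi_h\underline{H}$, $\bm{v}_S=\Pi_h^{\textnormal{BDM}}\bm{\psi}_S$, $\bm{v}_D=\mathcal{J}_h\bm{\psi}_D$ and $q_D=\mathcal{I}_h\phi$, and subtract. After all bulk and interior-edge pairings cancel by consistency, only cross-terms pairing a primal error with an approximation error of a dual variable remain. Applying Cauchy--Schwarz along with \eqref{eq:Perror}--\eqref{eq:sigmaerror} and standard trace inequalities, each dual-approximation factor carries an $h\,(\|\bm{\psi}_S\|_{2,\Omega_S}+\|\phi\|_{2,\Omega_D})$, while each primal-error factor contributes an $h^k(\|\underline{\sigma}\|_{k,\Omega_S}+\|\bm{u}_S\|_{k+1,\Omega_S}+\|\bm{u}_D\|_{k+1,\Omega_D})$ by Theorem~\ref{thm:sigmaL2}. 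The regularity bound \eqref{eq:regularity} converts the $H^2$ norms of the dual solution into $\|e_{\bm{u}_S}\|_{0,\Omega_S}+\|e_{p_D}\|_{0,\Omega_D}$, and dividing one copy out yields the optimal $h^{k+1}$ estimate.

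The main difficulty will be the interface bookkeeping at $\Gamma$: after integration by parts on the dual, the tangential/normal splits produced by the dual BJS-type conditions must cancel exactly against $(\bm{v}_S\cdot\bm{n}_S,p_D-p_{D,h})_e$, $G^{-1}((\bm{u}_S-\bm{u}_{S,h})\cdot\bm{t}_S,\bm{v}_S\cdot\bm{t}_S)_e$ and $((\bm{u}_S-\bm{u}_{S,h})\cdot\bm{n}_S,q_D)_e$, which relies on the orthogonality of $\Pi_h^{\textnormal{BDM}}$ and $\mathcal{I}_h$ on $\Gamma$-edges exactly as already used in the proof of Theorem~\ref{thm:sigmaL2}. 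A secondary delicate point is the pressure-like term $(\tfrac12\mbox{tr}(\underline{H}),\mbox{div}_h e_{\bm{u}_S})_{\Omega_S}$ produced by the $\mathcal{A}$-decomposition of $\underline{H}$; it must be absorbed using the defining property of $\Pi_h^{\textnormal{BDM}}$ that $\mbox{div}\,\Pi_h^{\textnormal{BDM}}\bm{v}$ equals the $L^2$-projection of $\mbox{div}\,\bm{v}$, otherwise one loses a factor of $h$ and falls short of optimality.
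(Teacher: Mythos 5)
Your proposal follows essentially the same route as the paper's proof: a duality argument built on the very same auxiliary problem \eqref{eq:first1-dual}--\eqref{eq:Darcy2-dual}, with element-wise integration by parts, the dual interface conditions to match the primal interface terms, Galerkin orthogonality via \eqref{eq:error1}--\eqref{eq:error3} tested against interpolants of the dual solution, and the regularity bound \eqref{eq:regularity} combined with Theorem~\ref{thm:sigmaL2} to close the estimate. The details you flag (the orthogonality of $\Pi_h^{\textnormal{BDM}}$ and $\mathcal{I}_h$ on interface edges, and the handling of the trace part of $\underline{H}$) are precisely the points the paper's proof relies on, so the plan is sound.
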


\begin{proof}

Multiplying \eqref{eq:first1-dual} by $\underline{\sigma}-\underline{\sigma}_h$, \eqref{eq:first2-dual}
by $\Pi_h^{\text{BDM}}\bm{u}_S-\bm{u}_{S,h}$, \eqref{eq:Darcy1-dual} by $\mathcal{I}_hp_D-p_{D,h}$ and \eqref{eq:Darcy2-dual} by $\bm{u}_D-\bm{u}_{D,h}$ and performing integration by parts lead to
\begin{align*}
&\|\Pi_h^{\text{BDM}}\bm{u}_S-\bm{u}_{S,h}\|_{0,\Omega_S}^2+\|\mathcal{I}_hp_D-p_{D,h}\|_{0,\Omega_D}^2\\
&=-a_S(\underline{H},\Pi_h^{\text{BDM}}\bm{u}_S-\bm{u}_{S,h})
+((2\mu)^{-1}\mathcal{A}\underline{H},\underline{\sigma}-\underline{\sigma}_h)_{\Omega_S}+a_S(\underline{\sigma}-\underline{\sigma}_h,\bm{\psi}_S)\\
&-b_D(\bm{\psi}_D,\mathcal{I}_hp_D-p_{D,h})+(K^{-1}\bm{\psi}_D,\bm{u}_D-\bm{u}_{D,h})_{\Omega_D}+\sum_{e\in \mathcal{F}_{h,S}^0}h_e^{-1}(\jump{\bm{u}_S-\bm{u}_{S,h}},\jump{\bm{\psi}_S})_e\\
&+b_D^*(\phi, \bm{u}_D-\bm{u}_{D,h})-\sum_{e\in \mathcal{F}_{h,\Gamma}}((\Pi_h^{\text{BDM}}\bm{u}_S-\bm{u}_{S,h})\cdot\bm{n}_S,\phi)_e\\
&\;+\sum_{e\in \mathcal{F}_{h,\Gamma}}(\mathcal{I}_hp_D-p_{D,h},\bm{\psi}_S\cdot\bm{n}_S)_e
+\sum_{e\in \mathcal{F}_{h,\Gamma}}\frac{1}{G}((\Pi_h^{\text{BDM}}\bm{u}_S-\bm{u}_{S,h})\cdot\bm{t}_S,\bm{\psi}_S\cdot\bm{t}_S)_e.
\end{align*}
which, coupled with \eqref{eq:error1}-\eqref{eq:error3} yields
\begin{align*}
&\|\Pi_h^{\text{BDM}}\bm{u}_S-\bm{u}_{S,h}\|_{0,\Omega_S}^2+\|\mathcal{I}_hp_D-p_{D,h}\|_{0,\Omega_D}^2\\
&=a_S(\Pi_h \underline{H}-\underline{H},\Pi_h^{\text{BDM}}\bm{u}_S-\bm{u}_{S,h})
+((2\mu)^{-1}\mathcal{A}(\underline{H}-\Pi_h \underline{H}),\underline{\sigma}-\underline{\sigma}_h)_{\Omega_S}
+a_S(\underline{\sigma}-\underline{\sigma}_h,\bm{\psi}_S-\Pi_h^{\text{BDM}} \bm{\psi}_S)\\
&-b_D(\bm{\psi}_D-\mathcal{J}_h\bm{\psi}_D,\mathcal{I}_hp_D-p_{D,h})+(K^{-1}(\bm{\psi}_D-\mathcal{J}_h\bm{\psi}_D),\bm{u}_D-\bm{u}_{D,h})_{\Omega_D}\\
&+b_D^*(\phi-\mathcal{I}_h\phi, \bm{u}_D-\bm{u}_{D,h})-\sum_{e\in \mathcal{F}_{h,\Gamma}}((\Pi_h^{\text{BDM}}\bm{u}_S-\bm{u}_{S,h})\cdot\bm{n}_S,\phi-\mathcal{I}_h\phi)_e\\
&\;+\sum_{e\in \mathcal{F}_{h,S}^0}h_e^{-1}(\jump{\bm{u}_S-\bm{u}_{S,h}},\jump{\bm{\psi}_S-\Pi_h^{\text{BDM}}\bm{\psi}_S})_e
+\sum_{e\in \mathcal{F}_{h,\Gamma}}(\mathcal{I}_hp_D-p_{D,h},(\bm{\psi}_S-\Pi_h^{\text{BDM}}\bm{\psi}_S)\cdot\bm{n}_S)_e\\
&\;+\frac{1}{G}\sum_{e\in \mathcal{F}_{h,\Gamma}}((\Pi_h^{\text{BDM}}\bm{u}_S-\bm{u}_{S,h})\cdot\bm{t}_S,(\bm{\psi}_S-\Pi_h^{\text{BDM}}\bm{\psi}_S)\cdot\bm{t}_S)_e.
\end{align*}
The first two terms on the right hand side can be bounded by the Cauchy-Schwarz inequality
\begin{align*}
a_S(\Pi_h \underline{H}-\underline{H},\Pi_h^{\text{BDM}}\bm{u}_S-\bm{u}_{S,h})
%&\leq \Big(\sum_{e\in \mathcal{F}_{h,S}}h_e\|\avg{\Pi_h H-H}\|_{0,e}^2\Big)^{1/2}
%\Big(\sum_{e\in \mathcal{F}_{h,S}}h_e^{-1}\|\jump{\Pi_h^{\text{BDM}}\bm{u}_S-\bm{u}_{S,h}}\|_{0,e}^2\Big)^{1/2}\\
&\leq C h\|\underline{H}\|_{1,\Omega_S}\Big(\sum_{e\in \mathcal{F}_{h,S}^0}h_e^{-1}\|\jump{\Pi_h^{\text{BDM}}\bm{u}_S-\bm{u}_{S,h}}\|_{0,e}^2\Big)^{1/2},\\
((2\mu)^{-1}\mathcal{A}(\underline{H}-\Pi_h \underline{H}),\underline{\sigma}-\underline{\sigma}_h)_{\Omega_S}&\leq \|\mu^{-\frac{1}{2}}\mathcal{A}(\underline{\sigma}-\underline{\sigma}_h)\|_{0,\Omega_S}\|\mu^{-\frac{1}{2}}\mathcal{A}(\underline{H}-\Pi_h \underline{H})\|_{0,\Omega_S}\\
&\leq C h\|\mu^{-\frac{1}{2}}\mathcal{A}(\underline{\sigma}-\underline{\sigma}_h)\|_{0,\Omega_S}\|\underline{H}\|_{1,\Omega_S}.
\end{align*}
%\begin{align*}
%(2\mu^{-1}A(H-\Pi_h H),\underline{\sigma}-\underline{\sigma}_h)\leq \|\mathcal{A}(\underline{\sigma}-\underline{\sigma}_h)\|_{0,\Omega_S}\|2\mu^{-1}A(H-\Pi_h H)\|_{0,\Omega_S}
%\end{align*}
We can rewrite the third term as follows
\begin{equation}
\begin{split}
a_S(\underline{\sigma}-\underline{\sigma}_h,\bm{\psi}_S-\Pi_h^{\text{BDM}} \bm{\psi}_S)
&=a_S(\underline{\sigma}-\Pi_h\underline{\sigma},\bm{\psi}_S-\Pi_h^{\text{BDM}} \bm{\psi}_S)\\
&\;+
a_S(\Pi_h\underline{\sigma}-\underline{\sigma}_h,\bm{\psi}_S-\Pi_h^{\text{BDM}} \bm{\psi}_S),
\end{split}
\label{eq:as}
\end{equation}
where the first term on the right hand side can be bounded by
\begin{align*}
a_S(\underline{\sigma}-\Pi_h\underline{\sigma},\bm{\psi}_S-\Pi_h^{\text{BDM}} \bm{\psi}_S)\leq C h^{k+1}\|\underline{\sigma}\|_{k,\Omega_S}\|\bm{\psi}_S\|_{2,\Omega_S}.
\end{align*}
The second term on the right hand side of \eqref{eq:as} can be estimated similarly to \eqref{eq:R3}. Indeed, we have
\begin{align*}
&a_S(\Pi_h\underline{\sigma}-\underline{\sigma}_h,\bm{\psi}_S-\Pi_h^{\text{BDM}} \bm{\psi}_S)\\
&\leq \|\mathcal{A}(\Pi_h\underline{\sigma}-\underline{\sigma}_h)\|_{0,\Omega_S}\Big(\sum_{e\in \mathcal{F}_{h,S}^0}h_e^{-1}\|\avg{(\bm{\psi}_S-\Pi_h^{\text{BDM}}\bm{\psi}_S)\cdot\bm{t}}\|_{0,e}^2\Big)^{1/2}\\
&\leq C h\|\mathcal{A}(\Pi_h\underline{\sigma}-\underline{\sigma}_h)\|_{0,\Omega_S}\|\bm{\psi}_S\|_{2,\Omega_S}.
\end{align*}
On the other hand, we have from the definitions of $\mathcal{I}_h$, $\mathcal{J}_h$ and $\Pi_h^{\text{BDM}}$
\begin{align*}
\sum_{e\in \mathcal{F}_{h,\Gamma}}((\Pi_h^{\text{BDM}}\bm{u}_S-\bm{u}_{S,h})\cdot\bm{n}_S,\phi-\mathcal{I}_h\phi)_e&=0,\\
\sum_{e\in \mathcal{F}_{h,\Gamma}}(I_hp_D-p_{D,h},(\bm{\psi}_S-\Pi_h^{\text{BDM}}\bm{\psi}_S)\cdot\bm{n}_S)_e&=0,\\
b_D(\bm{\psi}_D-\mathcal{J}_h\bm{\psi}_D,\mathcal{I}_hp_D-p_{D,h})&=0,\\
b_D^*(\phi-\mathcal{I}_h\phi, \bm{u}_D-\bm{u}_{D,h})&=b_D^*(\phi-\mathcal{I}_h\phi, \bm{u}_D-\mathcal{J}_h\bm{u}_{D})\leq \|\phi-\mathcal{I}_h\phi\|_Z \|\bm{u}_D-\mathcal{J}_h\bm{u}_{D}\|_{0,h}\\
&\leq C h^{k+1}\|\phi\|_{2,\Omega_D}\|\bm{u}_D\|_{k+1,\Omega_D}.
\end{align*}
%\begin{align*}
%b_D^*(\phi-I_h\phi, \bm{u}_D-\bm{u}_{D,h})&=b_D^*(\phi-I_h\phi, \bm{u}_D-J_h\bm{u}_{D})\leq \|\phi-I_h\phi\|_Z \|\bm{u}_D-J_h\bm{u}_{D}\|_{0,h}\\
%&\leq C h^{k+1}\|\phi\|_{2,\Omega_D}\|\bm{u}_D\|_{k+1,\Omega_D}.
%\end{align*}
The Cauchy-Schwarz inequality implies
\begin{align*}
&\sum_{e\in \mathcal{F}_{h,S}^0}h_e^{-1}(\jump{\bm{u}_S-\bm{u}_{S,h}},\jump{\bm{\psi}_S-\Pi_h^{\text{BDM}}\bm{\psi}_S})_e\\
&\leq \Big(\sum_{e\in \mathcal{F}_{h,S}^0}h_e^{-1}\|\jump{\bm{u}_S-\bm{u}_{S,h}}\|_{0,e}^2\Big)^{\frac{1}{2}}\Big(\sum_{e\in \mathcal{F}_{h,S}^0}h_e^{-1}\|\jump{\bm{\psi}_S-\Pi_h^{\text{BDM}}\bm{\psi}_S}\|_{0,e}^2\Big)^{\frac{1}{2}}\\
&\leq C h \Big(\sum_{e\in \mathcal{F}_{h,S}^0}h_e^{-1}\|\jump{\bm{u}_S-\bm{u}_{S,h}}\|_{0,e}^2\Big)^{\frac{1}{2}}\|\bm{\psi}_S\|_{2,\Omega_S}.
\end{align*}
The last term can be bounded by the trace inequality and \eqref{eq:regularity}
\begin{align*}
&\frac{1}{G}\sum_{e\in \mathcal{F}_{h,\Gamma}}((\Pi_h^{\text{BDM}}\bm{u}_S-\bm{u}_{S,h})\cdot\bm{t}_S,(\bm{\psi}_S-\Pi_h^{\text{BDM}}\bm{\psi}_S)\cdot\bm{t}_S)_e\\
&\leq \frac{1}{G}\Big(\sum_{e\in \mathcal{F}_{h,\Gamma}}\|(\Pi_h^{\text{BDM}}\bm{u}_S-\bm{u}_{S,h})\cdot\bm{t}_S\|_{0,e}^2\Big)^{1/2}\Big(\sum_{e\in \mathcal{F}_{h,\Gamma}}\|(\bm{\psi}_S-\Pi_h^{\text{BDM}}\bm{\psi}_S)\cdot\bm{t}_S\|_{0,e}^2\Big)^{1/2}\\
&\leq C h\|\Pi_h^{\text{BDM}}\bm{u}_S-\bm{u}_{S,h}\|_{0,\Omega_S}\|(\Pi_h^{\text{BDM}}\bm{u}_S-\bm{u}_{S,h})\cdot\bm{t}_S\|_{0,\Gamma}.
\end{align*}
Then combining the preceding estimates with Theorem~\ref{thm:sigmaL2} and the interpolation error estimates \eqref{eq:Perror}-\eqref{eq:sigmaerror} implies the desired estimate.

\end{proof}

Proceeding similarly to Lemma~2.3 of \cite{Cai17}, we can get the following lemma.
\begin{lemma}\label{lemma:trace}
Let $e$ be an edge of $T\in \mathcal{T}_h$ and let $\delta>0$. Let $
V^{1+\alpha}(T):=\{\bm{\psi}\in H^{1+\alpha}(T)^2; \tdiv \bm{\psi}\in L^2(T)\}$ for $\alpha>0$ and $T\in \mathcal{T}_h$.
Assume that $\bm{\psi}$ is a given function in $V^{1+s}(T)$. Then, there exists a small $0<s<\min\{\delta,1/2\}$ and a positive constant $C$ independent of $s$ such that
\begin{align*}
\|\bm{\psi}\cdot\bm{n}\|_{s-1/2,e}\leq C \Big(\|\bm{\psi}\|_{0,T}+h_T\|\tdiv \bm{\psi}\|_{0,T}\Big).
\end{align*}

\end{lemma}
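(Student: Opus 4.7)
The plan is a scaling argument followed by a duality-based proof on a reference element. First, I would apply an affine map from a reference triangle $\hat T$ with a distinguished edge $\hat e$ to $T$, composed with the Piola transform on $\bm{\psi}$ so that normal traces are preserved. Under this change of variables $\|\hat{\bm{\psi}}\|_{0,\hat T}\sim\|\bm{\psi}\|_{0,T}$ and $\|\widehat{\tdiv}\hat{\bm{\psi}}\|_{0,\hat T}\sim h_T\|\tdiv\bm{\psi}\|_{0,T}$, while the fractional edge norm scales as an explicit positive power of $h_T$, so the claim reduces to the dimensionless reference statement
\[
\|\hat{\bm{\psi}}\cdot\hat{\bm{n}}\|_{s-1/2,\hat e}\le C\bigl(\|\hat{\bm{\psi}}\|_{0,\hat T}+\|\widehat{\tdiv}\hat{\bm{\psi}}\|_{0,\hat T}\bigr).
\]

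To prove the reference-element estimate I would invoke the dual characterisation
\[
\|\hat{\bm{\psi}}\cdot\hat{\bm{n}}\|_{s-1/2,\hat e}=\sup_{0\neq\phi\in H^{1/2-s}(\hat e)}\frac{(\hat{\bm{\psi}}\cdot\hat{\bm{n}},\phi)_{\hat e}}{\|\phi\|_{1/2-s,\hat e}},
\]
where the pairing is a genuine $L^2$ integral because $\hat{\bm{\psi}}\in H^{1+s}(\hat T)^2$ admits a proper trace in $H^{1/2+s}(\hat e)$. Since $1/2-s\in(0,1/2)$---this being precisely why we must take $s>0$---the zero extension $\phi\mapsto\tilde\phi$ from $H^{1/2-s}(\hat e)$ to $H^{1/2-s}(\partial\hat T)$ is continuous. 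I would then construct a lifting $\Phi\in H^1(\hat T)$ with $\Phi|_{\hat e}=\phi$, $\Phi|_{\partial\hat T\setminus\hat e}=0$, and $\|\Phi\|_{1,\hat T}\le C\|\phi\|_{1/2-s,\hat e}$, uniformly for $s$ in a fixed range $(0,s_0)$ with $s_0<\min\{\delta,1/2\}$. Green's formula---legitimate since $\hat{\bm{\psi}}$ is smooth enough for the boundary integral to be a true integral and $\Phi\in H^1$---then yields
\begin{align*}
(\hat{\bm{\psi}}\cdot\hat{\bm{n}},\phi)_{\hat e}=(\hat{\bm{\psi}},\nabla\Phi)_{\hat T}+(\widehat{\tdiv}\hat{\bm{\psi}},\Phi)_{\hat T},
\end{align*}
and two Cauchy--Schwarz inequalities give the target bound with a constant independent of $s\in(0,s_0)$.

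The main obstacle is the construction of the $H^1$ lifting $\Phi$ that respects the homogeneous Dirichlet condition on $\partial\hat T\setminus\hat e$ while being bounded by $\|\phi\|_{1/2-s,\hat e}$. A naive harmonic extension of $\tilde\phi$ only delivers $\Phi\in H^{1-s}(\hat T)$, and pairing in $H^s\times H^{-s}$ with such a $\Phi$ would force $\|\hat{\bm{\psi}}\|_{s,\hat T}$ on the right-hand side instead of $\|\hat{\bm{\psi}}\|_{0,\hat T}$. The remedy, hinted at by the strict inequality $s<\min\{\delta,1/2\}$ in the statement, is to combine a smooth cut-off that localises $\phi$ away from the two vertices of $\hat e$ with an explicit (tangential--radial) extension into the interior of $\hat T$; this yields an $H^1$ lifting whose norm deteriorates at most algebraically as $s\to 0^+$, and fixing $s$ strictly positive absorbs this factor into $C$. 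The remaining steps---verifying the Piola scaling, placing the $h_T$ in front of $\|\tdiv\bm{\psi}\|_{0,T}$, and tracking the fractional norm under the affine map---are routine bookkeeping and proceed exactly as in Lemma~2.3 of \cite{Cai17}.
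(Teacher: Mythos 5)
The paper does not actually prove this lemma; it only points to Lemma~2.3 of \cite{Cai17}, so your proposal has to be measured against that argument. Your overall architecture --- Piola scaling to a reference element, dual characterisation of the $H^{s-1/2}(\hat e)$ norm, extension of the test function by zero to $\partial\hat T$, a lifting into the interior, and Green's formula --- is the standard route and matches the cited proof in outline. However, the step on which you close the argument is impossible as stated: there is no lifting $\Phi\in H^1(\hat T)$ with $\Phi|_{\hat e}=\phi$ and $\|\Phi\|_{1,\hat T}\le C\|\phi\|_{1/2-s,\hat e}$ for a general $\phi\in H^{1/2-s}(\hat e)$, because the trace on $\hat e$ of any $H^1(\hat T)$ function lies in $H^{1/2}(\hat e)$, which is a proper subspace of $H^{1/2-s}(\hat e)$ when $s>0$. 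Your proposed remedy (a cut-off near the endpoints of $\hat e$ combined with a tangential--radial extension) only addresses the vertex-compatibility issue that arises in the zero extension $H^{1/2-s}(\hat e)\to H^{1/2-s}(\partial\hat T)$; it cannot repair the interior smoothness deficit of $\phi$, so the ``algebraic deterioration as $s\to 0^+$'' you invoke is in fact an infinite loss for any $\phi\notin H^{1/2}(\hat e)$.

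The correct continuation is precisely the one you set aside: lift $\tilde\phi$ only into $H^{1-s}(\hat T)$ and estimate $(\hat{\bm{\psi}},\nabla\Phi)_{\hat T}$ by an $H^{s}$--$H^{-s}$ duality, which puts $\|\hat{\bm{\psi}}\|_{s,\hat T}$ on the right-hand side. This is exactly where the hypothesis $\bm{\psi}\in H^{1+s}(T)^2$ (rather than mere membership of $H(\mathrm{div};T)$) is needed, and the $H^{s}$-contribution is subsequently converted back to $\|\bm{\psi}\|_{0,T}$: under the Piola map one has $|\hat{\bm{\psi}}|_{s,\hat T}\sim h_T^{s}|\bm{\psi}|_{s,T}$, and in the paper's application $\bm{\psi}=\Pi_h^{\text{BDM}}\bm{u}_S-\bm{u}_{S,h}$ is a piecewise polynomial, for which the inverse inequality $|\bm{\psi}|_{s,T}\le Ch_T^{-s}\|\bm{\psi}\|_{0,T}$ removes the extra term entirely. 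You should also not dismiss the edge-norm scaling as routine bookkeeping: with the unscaled Sobolev--Slobodeckij norms the dual norm on $e$ gains a factor $h_e^{-1/2}$ relative to the reference edge (test the asserted inequality with a constant vector field $\bm{\psi}$ to see that some normalisation of $\|\cdot\|_{s-1/2,e}$ must be implicit), so the precise convention for the fractional norms has to be fixed before the scaling step, and before the claim that the constant is independent of $s$, can be justified.
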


The trace inequality introduced in Lemma~\ref{lemma:trace} is crucial to obtain the optimal convergence error estimate for $L^2$-error of $\bm{u}_D$. Now, we can state the following theorem.

\begin{theorem}
Let $(\underline{\sigma},\bm{u}_S)\in H^{k}(\Omega_S)^{2\times 2}\times H^{k+1}(\Omega_S)^2$ and $ \bm{u}_D\in H^{k+1}(\Omega_D)^2$. In addition, let $(\underline{\sigma}_h,\bm{u}_{S,h})\in \Sigma_h^S\times U_h^S$ and $\bm{u}_{D,h}\in U_h^D$ be the discrete solution of \eqref{eq:discrete1}-\eqref{eq:discrete3}. Then, we have
\begin{align*}
\|K^{-\frac{1}{2}}(\bm{u}_D-\bm{u}_{D,h})\|_{0,\Omega_D}\leq C h^{k+1}\Big(\|\bm{u}_{D}\|_{k+1,\Omega_D}+\|\underline{\sigma}\|_{k,\Omega_S}+\|\bm{u}_S\|_{k+1,\Omega_S}+\|p_D\|_{k+1,\Omega_D}\Big).
\end{align*}

\end{theorem}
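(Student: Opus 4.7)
The plan is to exploit the splitting
\begin{align*}
\bm{u}_D - \bm{u}_{D,h} = (\bm{u}_D - \mathcal{J}_h\bm{u}_D) + (\mathcal{J}_h\bm{u}_D - \bm{u}_{D,h});
\end{align*}
the first piece is immediately $O(h^{k+1})$ by \eqref{eq:uerror}, so I focus on the discrete remainder. Testing the error equation \eqref{eq:error2} with $\bm{v}_S = \bm{0}$ and $\bm{v}_D = \mathcal{J}_h\bm{u}_D - \bm{u}_{D,h}$ gives
\begin{align*}
\|K^{-1/2}(\mathcal{J}_h\bm{u}_D - \bm{u}_{D,h})\|_{0,\Omega_D}^2 &= (K^{-1}(\mathcal{J}_h\bm{u}_D - \bm{u}_D), \mathcal{J}_h\bm{u}_D - \bm{u}_{D,h})_{\Omega_D}\\
&\quad + b_{D}^*(p_D - p_{D,h}, \mathcal{J}_h\bm{u}_D - \bm{u}_{D,h}).
\end{align*}
Cauchy--Schwarz together with \eqref{eq:uerror} handles the first right-hand side term at rate $h^{k+1}\|\mathcal{J}_h\bm{u}_D - \bm{u}_{D,h}\|_{0,\Omega_D}$, so the main task is to control the $b_{D}^*$ term.

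I would first reduce that term to a pure interface coupling. The moment-matching properties defining $\mathcal{I}_h$ ensure that each factor of $\bm{v}_D$ appearing in $b_{D}^*(\cdot,\bm{v}_D)$ --- $\jump{\bm{v}_D\cdot\bm{n}}$ on $\mathcal{F}_{pr,D}^0$, $\bm{v}_D\cdot\bm{n}_D$ on $\mathcal{F}_{h,\Gamma}$, and $\tdiv_h \bm{v}_D$ on elements --- sits in the polynomial space against which $p_D - \mathcal{I}_h p_D$ is orthogonal, so that $b_{D}^*(p_D - \mathcal{I}_h p_D, \bm{v}_D) = 0$. The discrete adjoint identity \eqref{eq:adjointD2} converts the surviving $b_{D}^*(\mathcal{I}_h p_D - p_{D,h}, \bm{v}_D)$ into $b_{D}(\bm{v}_D, \mathcal{I}_h p_D - p_{D,h})$; writing $\bm{v}_D = (\mathcal{J}_h\bm{u}_D - \bm{u}_D) + (\bm{u}_D - \bm{u}_{D,h})$, the contribution of $\mathcal{J}_h\bm{u}_D - \bm{u}_D$ vanishes by the defining orthogonality of $\mathcal{J}_h$ (against $P^k$ on dual edges and $P^{k-1}$ on elements), while \eqref{eq:error3} applied to $\bm{u}_D - \bm{u}_{D,h}$ yields
\begin{align*}
b_{D}^*(p_D - p_{D,h}, \mathcal{J}_h\bm{u}_D - \bm{u}_{D,h}) = \sum_{e\in \mathcal{F}_{h,\Gamma}}\big((\bm{u}_S - \bm{u}_{S,h})\cdot\bm{n}_S,\; \mathcal{I}_h p_D - p_{D,h}\big)_e.
\end{align*}

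The hard part is estimating this interface sum at the sharp rate $h^{k+1}$, and here Lemma~\ref{lemma:trace} is indispensable. I apply it to $\bm{\psi} = \bm{u}_S - \bm{u}_{S,h}$, using $\tdiv\bm{u}_S = 0$, to obtain
\begin{align*}
\|(\bm{u}_S - \bm{u}_{S,h})\cdot\bm{n}_S\|_{s-1/2,e} \leq C\big(\|\bm{u}_S - \bm{u}_{S,h}\|_{0,T} + h_T\|\tdiv\bm{u}_{S,h}\|_{0,T}\big).
\end{align*}
The first summand is $O(h^{k+1})$ by Theorem~\ref{thm:uL2}. For the second, a short auxiliary computation is required: testing \eqref{eq:discrete1} with $\underline{w} = \phi I$ for $\phi\in P_{k-1}(T)$ annihilates the left-hand side (since $\phi I \in \ker\mathcal{A}$) and produces the identity $(\phi,\tdiv_h\bm{u}_{S,h})_{\Omega_S} = \sum_{e\in\mathcal{F}_{h,S}^0}(\avg{\phi}, \jump{\bm{u}_{S,h}}\cdot\bm{n})_e$; selecting $\phi = \tdiv\bm{u}_{S,h}|_T$ element-by-element and invoking the jump estimate from Theorem~\ref{thm:sigmaL2} delivers $h\|\tdiv\bm{u}_{S,h}\|_{0,\Omega_S} \leq Ch^{k+1}$. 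The companion factor $\|\mathcal{I}_h p_D - p_{D,h}\|_{1/2-s,\Gamma}$ is controlled through the $H^{s-1/2}\times H^{1/2-s}$ duality pairing via the triangle inequality, standard trace and interpolation estimates for $p_D - \mathcal{I}_h p_D$, and Theorem~\ref{thm:uL2} for the discrete pressure error. A final Young inequality absorbs residual $\|\mathcal{J}_h\bm{u}_D - \bm{u}_{D,h}\|_{0,\Omega_D}$ contributions back into the left-hand side, and the triangle inequality completes the estimate.
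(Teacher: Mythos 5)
Your skeleton coincides with the paper's: split off $\bm{u}_D-\mathcal{J}_h\bm{u}_D$, use the moment conditions of $\mathcal{I}_h$ and $\mathcal{J}_h$ together with \eqref{eq:adjointD2} and \eqref{eq:error3} to reduce everything to the single interface sum $\sum_{e\in\mathcal{F}_{h,\Gamma}}(\,\cdot\,,\mathcal{I}_hp_D-p_{D,h})_e$, and attack that sum with Lemma~\ref{lemma:trace}. One local deviation is harmless but costs you extra work: you apply Lemma~\ref{lemma:trace} to $\bm{u}_S-\bm{u}_{S,h}$, which is not a polynomial, so you are forced into the auxiliary identity $(\phi,\tdiv_h\bm{u}_{S,h})_{\Omega_S}=\sum_{e\in\mathcal{F}_{h,S}^0}(\avg{\phi},\jump{\bm{u}_{S,h}}\cdot\bm{n})_e$ to control $h\|\tdiv_h\bm{u}_{S,h}\|_{0,\Omega_S}$ by the jump seminorm from Theorem~\ref{thm:sigmaL2}; that computation is correct. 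The paper avoids it entirely by applying the lemma to the discrete difference $\Pi_h^{\text{BDM}}\bm{u}_S-\bm{u}_{S,h}$ (legitimate, since the interface moments of $\bm{u}_S-\Pi_h^{\text{BDM}}\bm{u}_S$ against $P^k(e)$ vanish), for which the divergence term is swallowed by an elementwise inverse estimate.

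The genuine gap is in the companion factor. You propose to bound $\|\mathcal{I}_hp_D-p_{D,h}\|_{1/2-s,\Gamma}$ a priori, via interpolation estimates for $p_D-\mathcal{I}_hp_D$ and Theorem~\ref{thm:uL2} for $p_D-p_{D,h}$. Theorem~\ref{thm:uL2} controls $\|p_D-p_{D,h}\|_{0,\Omega_D}$; passing from an $L^2(\Omega_D)$ bound to any nonnegative-order Sobolev norm on $\Gamma$ costs at least a factor $h^{-1/2}$ (discrete trace inequality, or multiplicative trace plus interpolation of norms), so this route delivers only $O(h^{k+1/2})$ for the interface pressure factor. The product with the $O(h^{k+1})$ velocity factor is then $O(h^{2k+3/2})$, and after taking square roots the final estimate is $O(h^{k+3/4})$, not $O(h^{k+1})$. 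The missing idea is that this factor must \emph{not} be estimated a priori: one uses the inverse inequality $\|\mathcal{I}_hp_D-p_{D,h}\|_{1/2-s,e}\leq Ch_{T}^{s-1/2}\|\mathcal{I}_hp_D-p_{D,h}\|_{0,e}$ with $s$ so close to $1/2$ that $h^{s-1/2}\leq 2$, then the chain $\|\mathcal{I}_hp_D-p_{D,h}\|_{0,\Gamma}\leq C\|\mathcal{I}_hp_D-p_{D,h}\|_{Z}\leq C\|K^{-1}(\bm{u}_D-\bm{u}_{D,h})\|_{0,\Omega_D}$ coming from the discrete trace inequality and the inf-sup condition \eqref{eq:inf-sup} applied to the Darcy error equation. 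This leaves the interface term bounded by $Ch^{k+1}\|K^{-1}(\bm{u}_D-\bm{u}_{D,h})\|_{0,\Omega_D}$, whose unknown part is exactly what Young's inequality can absorb into the left-hand side. Without the inf-sup step your Young-inequality absorption has nothing of the right form to act on, and the optimal rate is lost.
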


\begin{proof}

Taking $\bm{v}_S=\bm{0}$ in \eqref{eq:error2}, we have
\begin{align}
(K^{-1}(\bm{u}_D-\bm{u}_{D,h}),\bm{v}_D)_{\Omega_D}-b_{D}^*(p_D-p_{D,h},\bm{v}_D)&=0,\\
b_{D}(\mathcal{J}_h\bm{u}_D-\bm{u}_{D,h},q_D)-\sum_{e\in \mathcal{F}_{h,\Gamma}}((\Pi_h^{\text{BDM}}\bm{u}_S-\bm{u}_{S,h})\cdot\bm{n}_S,q_D)_e&=0\label{eq:b2}.
\end{align}
Setting $\bm{v}_D=\mathcal{J}_h\bm{u}_D-\bm{u}_{D,h}$ and $q_D=\mathcal{I}_hp_D-p_{D,h}$ in the above equations yields
\begin{align*}
(K^{-1}(\bm{u}_D-\bm{u}_{D,h}),\mathcal{J}_h\bm{u}_D-\bm{u}_{D,h})_{\Omega_D}
-\sum_{e\in \mathcal{F}_{h,\Gamma}}((\Pi_h^{\text{BDM}}\bm{u}_S-\bm{u}_{S,h})\cdot\bm{n}_S,\mathcal{I}_hp_D-p_{D,h})_e&=0,
\end{align*}
where we make use of the definitions of the interpolation operators $\mathcal{I}_h$ and $\mathcal{J}_h$.

For any $e\in \mathcal{F}_{h,\Gamma}$ and $0<s <\frac{1}{2}$, we have from the inverse inequality and Lemma~\ref{lemma:trace} that
\begin{align*}
((\Pi_h^{\text{BDM}}\bm{u}_S-\bm{u}_{S,h})\cdot\bm{n}_S,I_hp_D-p_{D,h})_{e}&\leq C \|(\Pi_h^{\text{BDM}}\bm{u}_S-\bm{u}_{S,h})\cdot\bm{n}_S\|_{s-\frac{1}{2},e}\|I_hp_D-p_{D,h}\|_{\frac{1}{2}-s,e}\\
&\leq C\Big(\|\Pi_h^{\text{BDM}}\bm{u}_S-\bm{u}_{S,h}\|_{0,T_1}\\
&\;+h_{T_1}\|\nabla \cdot (\Pi_h^{\text{BDM}}\bm{u}_S-\bm{u}_{S,h})\|_{0,T_1}\Big)h_{T_2}^{s-\frac{1}{2}}\|I_hp_D-p_{D,h}\|_{0,e},
\end{align*}
where $T_1\in \mathcal{T}_{h,S}$ and $T_2\in \mathcal{T}_{h,D}$.

For $s$ sufficiently close to $\frac{1}{2}$, it holds $h_{T_2}^{s-\frac{1}{2}}\leq 2$, thereby it follows that
\begin{equation}
\begin{split}
&((\Pi_h^{\text{BDM}}\bm{u}_S-\bm{u}_{S,h})\cdot\bm{n}_S,I_hp_D-p_{D,h})_{0,e}\leq \Big(\|\Pi_h^{\text{BDM}}\bm{u}_S-\bm{u}_{S,h}\|_{0,T_1}\\
&\;+h_{T_1}\|\nabla \cdot (\Pi_h^{\text{BDM}}\bm{u}_S-\bm{u}_{S,h})\|_{0,T_1}\Big)\|I_hp_D-p_{D,h}\|_{0,e}.
\end{split}
\label{eq:uSh1}
\end{equation}
On the other hand, we have from the trace inequality (cf. \cite{Feng01}) and the discrete Poincar\'{e} inequality (cf. \cite{Brenner03}) that
\begin{align*}
\|I_hp_D-p_{D,h}\|_{0,\Gamma}\leq C \|I_hp_D-p_{D,h}\|_Z.
\end{align*}
In addition, it follows from the inf-sup condition \eqref{eq:inf-sup} and \eqref{eq:error2} that
\begin{align*}
\|I_hp_D-p_{D,h}\|_{Z}\leq C\|K^{-1}(\bm{u}_D-\bm{u}_{D,h})\|_{0,\Omega_D}.
\end{align*}
Thereby, summing \eqref{eq:uSh1} over all the edges in $\mathcal{F}_{h,\Gamma}$ and using the above arguments lead to
\begin{align*}
\|K^{-\frac{1}{2}}(\mathcal{J}_h\bm{u}_D-\bm{u}_{D,h})\|_{0,\Omega_D}\leq \|K^{-\frac{1}{2}}(\mathcal{J}_h\bm{u}_D-\bm{u}_D)\|_{0,\Omega_D}+\|\Pi_h^{\text{BDM}}\bm{u}_S-\bm{u}_{S,h}\|_{0,\Omega_S},
\end{align*}
which coupled with the interpolation error estimates \eqref{eq:Perror}-\eqref{eq:sigmaerror} and Theorem~\ref{thm:uL2} completes the proof.

\end{proof}

\begin{theorem}

Let $(\bm{u}_{S,h},\bm{u}_{D,h})\in U_h^S\times U_h^D$ be the numerical solution of \eqref{eq:discrete1}-\eqref{eq:discrete3}, then the interface condition \eqref{eq:interface1} is satisfied exactly at the discrete level, i.e.,
\begin{align*}
\bm{u}_{S,h}\cdot\bm{n}_S=\bm{u}_{D,h}\cdot\bm{n}_S\quad \textnormal{on}\;\Gamma.
\end{align*}

\end{theorem}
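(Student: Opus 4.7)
The claim asserts pointwise equality of two $P_k$ polynomials on each interface edge, so for a fixed $e_0 \in \mathcal{F}_{h,\Gamma}$ it is enough to show that $(\bm{u}_{D,h}-\bm{u}_{S,h})\cdot\bm{n}_S|_{e_0}$, which itself lies in $P_k(e_0)$, is $L^2(e_0)$-orthogonal to every $p_k \in P_k(e_0)$. My plan is to realise this orthogonality by testing the mass-conservation equation \eqref{eq:discrete3} against the basis function of $P_h^D$ dual, through (SD1), to the edge moment on $e_0$ associated with $p_k$.

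First I would fix $e_0 \in \mathcal{F}_{h,\Gamma}$ and $p_k \in P_k(e_0)$, and let $T_0 \in \mathcal{T}_{h,D}$ denote the unique sub-triangle of which $e_0$ is the (only) primal edge, so that $D(e_0)=T_0$. The dimensional identity $(k+1)+k(k+1)/2=(k+1)(k+2)/2=\dim P_k(T)$ on each sub-triangle shows the DOFs (SD1)-(SD2) are unisolvent, so I would exhibit the unique $q_D \in P_h^D$ whose only non-zero DOF is the edge moment on $e_0$ realising $q_D|_{e_0}=p_k$. A local count then guarantees that this $q_D$ is supported in $T_0$, vanishes on every other primal and interface edge, and satisfies $(q_D,r)_{T_0}=0$ for all $r\in P_{k-1}(T_0)$.

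Next I would reverse the integration by parts built into the definition of $b_D$, using $\jump{\bm{u}_{D,h}\cdot\bm{n}}_e=0$ on every $e\in\mathcal{F}_{dl,D}$, to rewrite
\begin{equation*}
b_D(\bm{u}_{D,h},q_D)=(\tdiv_h\bm{u}_{D,h},q_D)_{\Omega_D}-\sum_{e\in\mathcal{F}_{pr,D}^0}(\jump{\bm{u}_{D,h}\cdot\bm{n}},q_D)_e-(\bm{u}_{D,h}\cdot\bm{n}_D,q_D)_\Gamma.
\end{equation*}
For our $q_D$ the first term vanishes because $\tdiv\bm{u}_{D,h}|_{T_0}\in P_{k-1}(T_0)$ pairs against a function with zero $P_{k-1}$-moments on $T_0$; the primal-jump sum vanishes because $q_D|_e=0$ on every $e\in\mathcal{F}_{pr,D}^0$; and the boundary integral collapses to $(\bm{u}_{D,h}\cdot\bm{n}_D,p_k)_{e_0}$. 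Substituting into \eqref{eq:discrete3} and invoking $\bm{n}_D=-\bm{n}_S$ on $\Gamma$ reduces the identity to $((\bm{u}_{D,h}-\bm{u}_{S,h})\cdot\bm{n}_S,p_k)_{e_0}=(f_D,q_D)_{T_0}$, and the same $P_{k-1}$-orthogonality of $q_D$ is what the construction is designed to exploit in order to annihilate the right-hand side. Varying $p_k$ over $P_k(e_0)$ and then $e_0$ over $\mathcal{F}_{h,\Gamma}$ will then yield the exact equality claimed.

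The main obstacle I expect will be the careful justification of the vanishing of $(f_D,q_D)_{T_0}$: it hinges on the interior degrees of freedom (SD2) being taken against $P_{k-1}(T)$ and on the source datum $f_D$ being felt by the scheme only through moments compatible with that orthogonality; it is precisely this structural feature that the staggered DG setup uses to bake the interface condition exactly, rather than weakly, into the discrete system.
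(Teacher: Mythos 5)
Your localization is sound as far as it goes: for $e_0\in\mathcal{F}_{h,\Gamma}$ the dual element $D(e_0)$ is a single sub-triangle $T_0$ whose only primal edge is $e_0$, the count $(k+1)+k(k+1)/2=\dim P_k(T_0)$ does make (SD1)--(SD2) unisolvent, and the resulting $q_D$ (trace $p_k$ on $e_0$, vanishing $P_{k-1}(T_0)$-moments, zero outside $T_0$) lies in $P_h^D$ and reduces \eqref{eq:discrete3} exactly to
\begin{equation*}
\bigl((\bm{u}_{D,h}-\bm{u}_{S,h})\cdot\bm{n}_S,\,p_k\bigr)_{e_0}=(f_D,q_D)_{T_0}.
\end{equation*}
The gap is the right-hand side, and it is fatal to this route: there is no structural feature of the staggered scheme that makes it vanish. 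The function $q_D|_{T_0}$ is a \emph{nonzero} element of $P_k(T_0)$ orthogonal to $P_{k-1}(T_0)$, while $f_D$ enters the scheme through the genuine $L^2$ inner product $(f_D,q_D)_{\Omega_D}$, not through moments against $P_{k-1}(T)$; for generic $f_D$ the pairing $(f_D,q_D)_{T_0}$ is nonzero, so the orthogonality you are hoping to exploit simply is not there. Your identity in fact quantifies the defect of the discrete interface condition in terms of the component of $f_D$ along these $q_D$'s on the interface sub-triangles, rather than proving the defect is zero.

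The missing idea --- and the route the paper takes --- is to subtract the exact solution \emph{before} testing. The paper starts from the consistency equation \eqref{eq:error3}, uses the moment-preserving properties of $\mathcal{J}_h$ and $\Pi_h^{\text{BDM}}$ to replace $\bm{u}_D$, $\bm{u}_S$ by their interpolants and arrive at \eqref{eq:b2}, and only then chooses $q_D$ through the degrees of freedom (SD1)--(SD2). In that equation the datum has already cancelled against $\tdiv\bm{u}_D=f_D$, so the same localization yields the clean conclusions $\jump{(\mathcal{J}_h\bm{u}_D-\bm{u}_{D,h})\cdot\bm{n}}=0$ on primal edges, $\tdiv_h(\mathcal{J}_h\bm{u}_D-\bm{u}_{D,h})=0$, and $(\Pi_h^{\text{BDM}}\bm{u}_S-\bm{u}_{S,h})\cdot\bm{n}_D=(\mathcal{J}_h\bm{u}_D-\bm{u}_{D,h})\cdot\bm{n}_D$ on $\Gamma$, from which the theorem is then read off. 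Be aware that passing from this projected identity to the literal statement $\bm{u}_{S,h}\cdot\bm{n}_S=\bm{u}_{D,h}\cdot\bm{n}_S$ still requires $\Pi_h^{\text{BDM}}\bm{u}_S\cdot\bm{n}_S=\mathcal{J}_h\bm{u}_D\cdot\bm{n}_S$ on $\Gamma$, and your computation shows these two traces differ by exactly the same term $(f_D,q_D)_{T_0}$; so your calculation is a useful diagnostic of what that last inference costs, but as written the proposal does not prove the theorem.
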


\begin{proof}

We can infer from \eqref{eq:b2} and the discrete adjoint property that
\begin{align*}
&b_{D}^*(q_D,\mathcal{J}_h\bm{u}_D-\bm{u}_{D,h})-\sum_{e\in \mathcal{F}_{h,\Gamma}}((\Pi_h^{\text{BDM}}\bm{u}_S-\bm{u}_{S,h})\cdot\bm{n}_S,q_D)_e=0\quad \forall q_D\in P_h^D,
\end{align*}
thereby, we have
\begin{align*}
&-\sum_{e\in \mathcal{F}_{pr,D}^0}(q_D,\jump{(\mathcal{J}_h\bm{u}_D-\bm{u}_{D,h})\cdot\bm{n}})_e
+(q_D,\tdiv_h (\mathcal{J}_h\bm{u}_D-\bm{u}_{D,h}))_{\Omega_D}-\sum_{e\in \mathcal{F}_{h,\Gamma}}((\mathcal{J}_h\bm{u}_D-\bm{u}_{D,h})\cdot\bm{n}_D,q_{D})_e\\
&\;-\sum_{e\in \mathcal{F}_{h,\Gamma}}((\Pi_h^{\text{BDM}}\bm{u}_S-\bm{u}_{S,h})\cdot\bm{n}_S,q_D)_e=0.
\end{align*}
Then we can define $q_D$ using the degrees of freedom defined in \eqref{eq:dof1}-\eqref{eq:dof2} such that it satisfies
\begin{align*}
&\sum_{e\in \mathcal{F}_{pr,D}^0}\|\jump{(\mathcal{J}_h\bm{u}_D-\bm{u}_{D,h})\cdot\bm{n}}\|_{0,e}^2+\sum_{e\in \mathcal{F}_{h,\Gamma}}\|(\Pi_h^{\text{BDM}}\bm{u}_S-\bm{u}_{S,h}-(\mathcal{J}_h\bm{u}_D-\bm{u}_{D,h}))\cdot\bm{n}_D\|_{0,e}^2\\
&\;+\|\tdiv_h (\mathcal{J}_h\bm{u}_D-\bm{u}_{D,h})\|_{0,\Omega_D}^2=0.
\end{align*}
Therefore, we have
\begin{align*}
\jump{(\mathcal{J}_h\bm{u}_D-\bm{u}_{D,h})\cdot\bm{n}}\mid_e&=0\quad \forall e\in \mathcal{F}_{pr,D}^0,\\
\tdiv (\mathcal{J}_h\bm{u}_D-\bm{u}_{D,h})|_T&=0\quad \forall T\in \mathcal{T}_{h,D},\\
(\Pi_h^{\text{BDM}}\bm{u}_S-\bm{u}_{S,h})\cdot \bm{n}_D&=(\mathcal{J}_h\bm{u}_D-\bm{u}_{D,h})\cdot\bm{n}_D\quad \mbox{on}\;\Gamma.
\end{align*}
We can infer that
\begin{align*}
\bm{u}_{S,h}\cdot\bm{n}_S=\bm{u}_{D,h}\cdot\bm{n}_S\quad \mbox{on}\;\Gamma.
\end{align*}
Therefore, the proof is completed.
%Using the above property, we can rewrite \eqref{eq:b2} as
%\begin{align*}
%-\sum_{e\in \Gamma_{12}}( (\mathcal{J}_h\bm{u}_D-\bm{u}_{h,D})\cdot\bm{n}_D,p_{D})_e-((\Pi_h^{\text{BDM}}\bm{u}_S-\bm{u}_{h,S})\cdot\bm{n}_S,q_D)_\Gamma=0
%\end{align*}
%
%
%\begin{align*}
%b_{D}^*(p_D,\bm{v}_D)&=-\sum_{e\in \mathcal{F}_{pr,D}}(p_D,\jump{\bm{v}_D\cdot\bm{n}})_e+(p_D,\tdiv \bm{v}_D)-\sum_{e\in \Gamma_{12}}( \bm{v}_D\cdot\bm{n}_D,p_{D})_e,\\
%b_{D}(\bm{u}_D,q_D)&=\sum_{e\in \mathcal{F}_{dl,D}}(\bm{u}_D\cdot\bm{n},\jump{q_D})_e-(\bm{u}_D,\nabla_h q_D)
%\end{align*}

\end{proof}

\section{Robin type domain decomposition method}\label{sec:robin}

In this section, we introduce a novel domain decomposition method based on the Robin-type interface conditions, which can decouple the global system \eqref{eq:discrete1}-\eqref{eq:discrete3} into the Stokes subproblem and the Darcy subproblem. To begin with, we define two functions $g_D$ and $g_S$ on the interface $\Gamma$ such that
\begin{align}
-\underline{\sigma}\bm{n}_{S}\cdot \bm{n}_{S}-\delta_f \bm{u}_S\cdot\bm{n}_S&=g_S,\label{eq:robin1}\\
p_D-\delta_p \bm{u}_D\cdot\bm{n}_D&=g_D,\label{eq:robin2}
\end{align}
where $\delta_f$ and $\delta_p$ are two positive constants which will be specified later.

In addition, the following compatibility conditions should be satisfied in order to get the equivalence between the interface conditions \eqref{eq:interface1}-\eqref{eq:interface22} and \eqref{eq:robin1}-\eqref{eq:robin2}
\begin{align}
g_S&=p_D(1+\frac{\delta_f}{\delta_p})-g_{D} \frac{\delta_f}{\delta_p}\quad\; \mbox{on}\;\Gamma,\label{eq:compatibility1}\\
g_D &= g_S+(\delta_f+\delta_p)\bm{u}_S\cdot\bm{n}_S\quad \mbox{on}\;\Gamma.\label{eq:compatibility2}
\end{align}
Indeed, we have from \eqref{eq:compatibility1}-\eqref{eq:compatibility2}
\begin{align*}
g_D& =p_D(1+\frac{\delta_f}{\delta_p})-g_{D} \frac{\delta_f}{\delta_p}+(\delta_f+\delta_p)\bm{u}_S\cdot\bm{n}_S\\
&=p_D+\delta_f \bm{u}_D\cdot\bm{n}_D+(\delta_f+\delta_p)\bm{u}_S\cdot\bm{n}_S.
\end{align*}
Then it follows from \eqref{eq:robin2} that
\begin{align*}
(\delta_f+\delta_p)\bm{u}_D\cdot\bm{n}_D+(\delta_f+\delta_p)\bm{u}_S\cdot\bm{n}_S=0,
\end{align*}
which implies
\begin{align*}
\bm{u}_D\cdot\bm{n}_D=-\bm{u}_S\cdot\bm{n}_S.
\end{align*}
On the other hand, we can deduce from \eqref{eq:robin1}, \eqref{eq:robin2} and \eqref{eq:compatibility2} that
\begin{align*}
-\underline{\sigma}\bm{n}_{S}\cdot \bm{n}_{S}=\delta_f \bm{u}_S\cdot\bm{n}_S+g_S=g_D-\delta_p\bm{u}_S\cdot\bm{n}_S=p_D.
\end{align*}
Then we can propose the coupled discrete formulation with the modified interface conditions: For given $g_{D,h},g_{S,h}$, find $(\widetilde{\underline{\sigma}}_h,\widetilde{\bm{u}}_{S,h})\in \Sigma_h^S\times U_h^S$ and $(\widetilde{\bm{u}}_{D,h},\widetilde{p}_{D,h})\in U_h^D\times P_h^D$ such that
\begin{align}
&((2\mu)^{-1}\mathcal{A}\widetilde{\underline{\sigma}}_h,\underline{w})_{\Omega_S} = a_S(\underline{w},\widetilde{\bm{u}}_{S,h})\quad \underline{w}\in \Sigma_h^S,\label{eq:discrete12}\\
&a_S(\widetilde{\underline{\sigma}}_h,\bm{v}_S)+\sum_{e\in \mathcal{F}_{h,S}^0}\gamma h_e^{-1}(\jump{\widetilde{\bm{u}}_{S,h}},\jump{\bm{v}_S})_e+\delta_f\sum_{e\in \mathcal{F}_{h,\Gamma}}(\widetilde{\bm{u}}_{S,h}\cdot\bm{n}_S,\bm{v}_S\cdot\bm{n}_S)_{e}
+\frac{1}{G}\sum_{e\in \mathcal{F}_{h,\Gamma}}(\widetilde{\bm{u}}_{S,h}\cdot\bm{t}_S,\bm{v}_S\cdot\bm{t}_S)_e\nonumber\\
&+(K^{-1}\widetilde{\bm{u}}_{D,h},\bm{v}_D)_{\Omega_D}-b_{D}^*(\widetilde{p}_{D,h},\bm{v}_D)
=(\bm{f}_S,\bm{v}_S)_{\Omega_S}-\sum_{e\in \mathcal{F}_{h,\Gamma}}(g_{S,h},\bm{v}_S\cdot\bm{n}_S)_{e}\quad \forall \bm{v}_S\in U_h^S,\label{eq:discrete22}\\
&b_{D}(\widetilde{\bm{u}}_{D,h},q_D)+\sum_{e\in \mathcal{F}_{h,\Gamma}}\frac{1}{\delta_p}(\widetilde{p}_{D,h},q_D)_e=(f_D,q_D)_{\Omega_D}+\sum_{e\in \mathcal{F}_{h,\Gamma}}\frac{1}{\delta_p}(g_{D,h},q_D)_e\quad \forall q_D\in P_h^D\label{eq:discrete32}.
\end{align}
%which are supplemented with the following compatibility conditions
%\begin{alignat*}{2}
%g_{S,h}&=\widetilde{p}_{D,h}(1+\frac{\delta_f}{\delta_p})-g_{D,h}\frac{\delta_f}{\delta_p}&&\quad \mbox{on}\;\Gamma,\\
%g_{D,h} &= g_{S,h}+(\delta_f+\delta_p)\bm{u}_{S,h}\cdot\bm{n}_S&&\quad \mbox{on}\;\Gamma.
%\end{alignat*}
%Proceeding similarly to Lemma~5.1 of \cite{Chendd11}, we can obtain the following lemma.
%\begin{lemma}
%Let $(\underline{\sigma}_h,\bm{u}_{S,h},\bm{u}_{D,h},p_{D,h})\in \Sigma_h^S\times U_h^S\times U_h^D\times P_h^D$ be the solution of \eqref{eq:discrete1}-\eqref{eq:discrete3}, and $(\widetilde{\underline{\sigma}}_h,\widetilde{\bm{u}}_{S,h},\widetilde{\bm{u}}_{D,h},\widetilde{p}_{D,h})\in \Sigma_h^S\times U_h^S\times U_h^D\times P_h^D$ be the solution of \eqref{eq:discrete12}-\eqref{eq:discrete32}. Then
%\begin{align*}
%(\underline{\sigma}_h,\bm{u}_{S,h},\bm{u}_{D,h},p_{D,h})&=
%(\widetilde{\underline{\sigma}}_h,\widetilde{\bm{u}}_{S,h},\widetilde{\bm{u}}_{D,h},\widetilde{p}_{D,h})
%%(\bm{u}_{D,h},p_{D,h})&=(\widetilde{\bm{u}}_{D,h},\widetilde{p}_{D,h})
%\end{align*}
%if an only if the following compatibility conditions hold
%\begin{alignat*}{2}
%g_{S,h}&=\widetilde{p}_{D,h}(1+\frac{\delta_f}{\delta_p})-g_{D,h}\frac{\delta_f}{\delta_p}&&\quad \textnormal{on}\;\Gamma,\\
%g_{D,h} &= g_{S,h}+(\delta_f+\delta_p)\widetilde{\bm{u}}_{S,h}\cdot\bm{n}_S&&\quad \textnormal{on}\;\Gamma.
%\end{alignat*}
%
%\end{lemma}

\begin{lemma}

The discrete formulations \eqref{eq:discrete1}-\eqref{eq:discrete3} are equivalent to \eqref{eq:discrete12}-\eqref{eq:discrete32} if and only if
the following conditions hold on the interface $\Gamma$, i.e.,
\begin{align}
\widetilde{p}_{D,h}-\delta_p \widetilde{\bm{u}}_{S,h}\cdot\bm{n}_D&=g_{D,h},\label{eq:compa1}\\
\widetilde{p}_{D,h}-\delta_f \widetilde{\bm{u}}_{S,h}\cdot\bm{n}_S&=g_{S,h}.\label{eq:compa2}
\end{align}

\end{lemma}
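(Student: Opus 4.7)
The plan is to establish the equivalence as a direct algebraic comparison between the two weak formulations, exploiting the fact that the only differences live on the interface edges $\mathcal{F}_{h,\Gamma}$. Since both systems share the same ansatz spaces and the same bulk bilinear forms, I would subtract \eqref{eq:discrete3} from \eqref{eq:discrete32} and \eqref{eq:discrete2} from \eqref{eq:discrete22}, which reduces everything to identities supported on $\Gamma$.

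For the direction ``conditions imply equivalence'' ($\Leftarrow$), I would assume \eqref{eq:compa1}--\eqref{eq:compa2} hold and substitute. Using \eqref{eq:compa2}, the term $-\sum_{e\in\mathcal{F}_{h,\Gamma}}(g_{S,h},\bm{v}_S\cdot\bm{n}_S)_e$ on the right of \eqref{eq:discrete22} splits into $-\sum_e(\widetilde{p}_{D,h},\bm{v}_S\cdot\bm{n}_S)_e+\delta_f\sum_e(\widetilde{\bm{u}}_{S,h}\cdot\bm{n}_S,\bm{v}_S\cdot\bm{n}_S)_e$, and the $\delta_f$ contributions cancel the Robin penalty on the left, producing exactly the interface term $\sum_e(\bm{v}_S\cdot\bm{n}_S,\widetilde{p}_{D,h})_e$ of \eqref{eq:discrete2}. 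For the Darcy side, using \eqref{eq:compa1}, the right-hand side contribution $\frac{1}{\delta_p}\sum_e(g_{D,h},q_D)_e$ splits into $\frac{1}{\delta_p}\sum_e(\widetilde{p}_{D,h},q_D)_e-\sum_e(\widetilde{\bm{u}}_{S,h}\cdot\bm{n}_D,q_D)_e$; the $\widetilde{p}_{D,h}$ piece cancels the Robin penalty in \eqref{eq:discrete32}, and using $\bm{n}_D=-\bm{n}_S$ converts the remainder into $-\sum_e(\widetilde{\bm{u}}_{S,h}\cdot\bm{n}_S,q_D)_e$, matching \eqref{eq:discrete3}. Hence the solution of the modified system solves the original system, and by the uniqueness result (unique solvability theorem) the two coincide.

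For the converse direction ($\Rightarrow$), I would assume the two systems share the same solution $(\underline{\sigma}_h,\bm{u}_{S,h},\bm{u}_{D,h},p_{D,h})$ and subtract the Stokes and Darcy equations pairwise. Subtracting \eqref{eq:discrete2} from \eqref{eq:discrete22} cancels every bulk and jump contribution, leaving
\begin{equation*}
\sum_{e\in\mathcal{F}_{h,\Gamma}}\bigl(\delta_f\bm{u}_{S,h}\cdot\bm{n}_S-p_{D,h}+g_{S,h},\,\bm{v}_S\cdot\bm{n}_S\bigr)_e=0\qquad\forall\,\bm{v}_S\in U_h^S.
\end{equation*}
Similarly, subtracting \eqref{eq:discrete3} from \eqref{eq:discrete32} and using $\bm{n}_D=-\bm{n}_S$ yields
\begin{equation*}
\sum_{e\in\mathcal{F}_{h,\Gamma}}\bigl(p_{D,h}-g_{D,h}-\delta_p\bm{u}_{S,h}\cdot\bm{n}_D,\,q_D\bigr)_e=0\qquad\forall\,q_D\in P_h^D.
\end{equation*}
The crux is then to pass from these integral identities to the pointwise edgewise conditions \eqref{eq:compa1}--\eqref{eq:compa2}. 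This is where the degrees-of-freedom arguments come in: the traces $\bm{v}_S\cdot\bm{n}_S|_e$ and $q_D|_e$ range over the full polynomial space $P_k(e)$ on every $e\in\mathcal{F}_{h,\Gamma}$ (by the definition of $U_h^S$ and the interface degrees of freedom (SD1)). Since the integrands in the sums above are themselves polynomials of degree $k$ on each interface edge, the arbitrariness of the test function forces them to vanish edge by edge, giving precisely \eqref{eq:compa2} and \eqref{eq:compa1}.

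The only nontrivial step is the edgewise extraction argument in the converse direction, and its correctness hinges solely on verifying that the test-function traces on $\Gamma$ fill out $P_k(e)$; the rest is bookkeeping with signs of $\bm{n}_S$ versus $\bm{n}_D$ and grouping the Robin penalty with the corresponding right-hand side. No inf-sup or regularity argument is needed, so the proof is short once these identifications are laid out cleanly.
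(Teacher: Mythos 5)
Your proposal is correct and follows essentially the same route as the paper: subtract the two discrete formulations so that only interface terms survive, then use the fact that the traces $\bm{v}_S\cdot\bm{n}_S|_e$ and $q_D|_e$ exhaust $P_k(e)$ on each $e\in\mathcal{F}_{h,\Gamma}$ (the latter via (SD1)) to pass from the weak interface identities to the edgewise conditions \eqref{eq:compa1}--\eqref{eq:compa2}. The paper's proof is terser — it writes out only the Darcy identity and handles the rest by "similarly" — so your explicit treatment of both directions and of the trace-spanning step is a faithful, more detailed rendering of the same argument.
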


\begin{proof}

If the discrete formulations \eqref{eq:discrete1}-\eqref{eq:discrete3} are equivalent to \eqref{eq:discrete12}-\eqref{eq:discrete32}, we can infer from \eqref{eq:discrete3} and \eqref{eq:discrete32} that
\begin{align*}
\frac{1}{\delta_p}(\widetilde{p}_{D,h}-g_{D,h},q_D)_\Gamma=-(\widetilde{\bm{u}}_{S,h}\cdot\bm{n}_S,q_D)_\Gamma\quad \forall q_D\in P_h^D.
\end{align*}
which implies \eqref{eq:compa1}. Similarly, \eqref{eq:discrete2} and \eqref{eq:discrete22} imply \eqref{eq:compa2}. The opposite direction can be proved in a similar manner.

\end{proof}

The corresponding decoupled formulations with Robin-type interface conditions read as follows: For given $g_{D,h},g_{S,h}$, find $(\widetilde{\underline{\sigma}}_h,\widetilde{\bm{u}}_{S,h})\in \Sigma_h^S\times U_h^S$ and $(\widetilde{\bm{u}}_{D,h},\widetilde{p}_{D,h})\in U_h^D\times P_h^D$ such that
\begin{align}
&(K^{-1}\widetilde{\bm{u}}_{D,h},\bm{v}_D)_{\Omega_D}-b_{D}^*(\widetilde{p}_{D,h},\bm{v}_D)=0\quad \forall \bm{v}_D\in U_h^D,\label{eq:Darcy-robin1}\\
&b_{D}(\widetilde{\bm{u}}_{D,h},q_D)+\sum_{e\in \mathcal{F}_{h,\Gamma}}\frac{1}{\delta_p}(\widetilde{p}_{D,h},q_D)_e=(f_D,q_D)_{\Omega_D}+\sum_{e\in \mathcal{F}_{h,\Gamma}}\frac{1}{\delta_p}(g_{D,h},q_D)_e\quad \forall q_D\in P_h^D\label{eq:Darcy-robin2},\\
&((2\mu)^{-1}\mathcal{A}\widetilde{\underline{\sigma}}_h,\underline{w})_{\Omega_S} = a_S(\underline{w},\widetilde{\bm{u}}_{S,h})\quad \forall \underline{w}\in \Sigma_h^S,\label{eq:Stokes-robin1}\\
&a_S(\widetilde{\underline{\sigma}}_h,\bm{v}_S)+\sum_{e\in \mathcal{F}_{h,S}^0}\gamma h_e^{-1}(\jump{\widetilde{\bm{u}}_{S,h}},\jump{\bm{v}_S})_e+\sum_{e\in \mathcal{F}_{h,\Gamma}}\delta_f(\widetilde{\bm{u}}_{S,h}\cdot\bm{n}_S,\bm{v}_S\cdot\bm{n}_S)_{e}\nonumber\\
&+\frac{1}{G}\sum_{e\in \mathcal{F}_{h,\Gamma}}(\widetilde{\bm{u}}_{S,h}\cdot\bm{t}_S,\bm{v}_S\cdot\bm{t}_S)_e=(\bm{f}_S,\bm{v}_S)_{\Omega_S}-\sum_{e\in \mathcal{F}_{h,\Gamma}}(g_{S,h},\bm{v}_S\cdot\bm{n}_S)_{e}\quad \forall \bm{v}_S\in U_h^S,\label{eq:Stokes-robin2}
\end{align}
which are supplemented with the following compatibility conditions
\begin{alignat*}{2}
g_{S,h}&=\widetilde{p}_{D,h}(1+\frac{\delta_f}{\delta_p})-g_{D,h}\frac{\delta_f}{\delta_p}&&\quad \mbox{on}\;\Gamma,\\
g_{D,h} &= g_{S,h}+(\delta_f+\delta_p)\widetilde{\bm{u}}_{S,h}\cdot\bm{n}_S&&\quad \mbox{on}\;\Gamma.
\end{alignat*}

We can now present the domain decomposition method based on the modified decoupled formulations.

\noindent\textbf{Algorithm DDM}

 \begin{itemize}
        \item[Step 1.] Initial values of $g_{S,h}^0$ and $g_{D,h}^0$ are defined.
        \item[Step 2.] For $m=0,1,2,\ldots,$ solve the following Stokes and Darcy systems independently. Specifically, find $(\bm{u}_{D,h}^m,p_{D,h}^m)\in U_h^D\times P_h^D$ such that
\begin{align}
(K^{-1}\bm{u}_{D,h}^m,\bm{v}_D)_{\Omega_D}-b_{D}^*(p_{D,h}^m,\bm{v}_D)&=0\quad \forall \bm{v}_D\in U_h^D,\label{eq:Darcy-robink1}\\
b_{D}(\bm{u}_{h,D}^m,q_D)+\sum_{e\in \mathcal{F}_{h,\Gamma}}\frac{1}{\delta_p}(p_{D,h}^m,q_D)_e&=(f_D,q_D)_{\Omega_D}+\sum_{e\in \mathcal{F}_{h,\Gamma}}\frac{1}{\delta_p}(g_{D,h},q_D)_e\quad \forall q_D\in P_h^D\label{eq:Darcy-robink2}
\end{align}
and find $(\underline{\sigma}_h^m,\bm{u}_{S,h}^m)\in \Sigma_h^S\times U_h^S$ such that
\begin{align}
&((2\mu)^{-1}\mathcal{A}\underline{\sigma}_h^m,\underline{w})_{\Omega_S} = a_S(\underline{w},\bm{u}_{S,h}^m)\quad \forall\underline{w}\in \Sigma_h^S,\label{eq:Stokes-robink1}\\
&a_S(\underline{\sigma}_h^m,\bm{v}_S)+\sum_{e\in \mathcal{F}_{h,S}^0}\gamma h_e^{-1}(\jump{\bm{u}_{h,S}^m},\jump{\bm{v}_S})_e+\sum_{e\in \mathcal{F}_{h,\Gamma}}\delta_f(\bm{u}_{S,h}^m\cdot\bm{n}_S,\bm{v}_S\cdot\bm{n}_S)_{e}\nonumber\\
&\;+\frac{1}{G}\sum_{e\in \mathcal{F}_{h,\Gamma}}(\bm{u}_{S,h}^m\cdot\bm{t}_S,\bm{v}_S\cdot\bm{t}_S)_e=(\bm{f}_S,\bm{v}_S)_{\Omega_S}-\sum_{e\in \mathcal{F}_{h,\Gamma}}(g_{S,h},\bm{v}_S\cdot\bm{n}_S)_{e}\quad \forall \bm{v}_S\in U_h^S.\label{eq:Stokes-robink2}
\end{align}

\item[Step 3.] Update $g_{S,h}^{m+1}$ and $g_{D,h}^{m+1}$ in the following way:
\begin{align*}
g_{S,h}^{m+1}&=p_{D,h}^m(1+\frac{\delta_f}{\delta_p})-g_{D,h}^m \frac{\delta_f}{\delta_p},\\
g_{D,h}^{m+1} &= g_{S,h}^{m}+(\delta_f+\delta_p)\bm{u}_{S,h}^{m}\cdot\bm{n}_S.
\end{align*}

\end{itemize}

%If we also update $g_{S,h}^{k+1}$ in $g_{D,k}$ it will converge fast for $\mu$ very small (such as 0.001).

\begin{lemma}\label{lemma:graduSh}
There exists a positive constant $C$ independent of the mesh size such that the following estimate holds
\begin{align*}
\|\varepsilon_h (\bm{u}_{S,h})\|_{0,\Omega_S}\leq C \Big(\sup_{\underline{\psi}\in \Sigma_h^S}\frac{a_S(\underline{\psi},\bm{u}_{S,h})}{\|\underline{\psi}\|_{0,\Omega_S}}+\Big(\sum_{e\in \mathcal{F}_{h,S}^0}h_e^{-1}\|\jump{\bm{u}_{S,h}}\|_{0,e}^2\Big)^{1/2}\Big).
\end{align*}

\end{lemma}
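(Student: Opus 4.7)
The plan is to test the expression $a_S(\underline{\psi},\bm{u}_{S,h})$ against the natural choice $\underline{\psi}=\varepsilon_h(\bm{u}_{S,h})$ itself, which lies in $\Sigma_h^S$ because $\bm{u}_{S,h}\in U_h^S$ is piecewise $P_k$, so $\varepsilon_h(\bm{u}_{S,h})$ is a symmetric piecewise $P_{k-1}$ tensor. From the definition
\[
a_S(\underline{\psi},\bm{u}_{S,h})=-\sum_{e\in\mathcal{F}_{h,S}^0}(\avg{\underline{\psi}\bm{n}},\jump{\bm{u}_{S,h}})_e+(\underline{\psi},\varepsilon_h(\bm{u}_{S,h}))_{\Omega_S},
\]
this choice isolates $\|\varepsilon_h(\bm{u}_{S,h})\|_{0,\Omega_S}^2$ on the volumetric part, giving
\[
\|\varepsilon_h(\bm{u}_{S,h})\|_{0,\Omega_S}^2 = a_S(\varepsilon_h(\bm{u}_{S,h}),\bm{u}_{S,h}) + \sum_{e\in\mathcal{F}_{h,S}^0}(\avg{\varepsilon_h(\bm{u}_{S,h})\bm{n}},\jump{\bm{u}_{S,h}})_e.
\]

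The first term on the right is controlled directly by the supremum, namely
\[
a_S(\varepsilon_h(\bm{u}_{S,h}),\bm{u}_{S,h}) \leq \Big(\sup_{\underline{\psi}\in\Sigma_h^S}\frac{a_S(\underline{\psi},\bm{u}_{S,h})}{\|\underline{\psi}\|_{0,\Omega_S}}\Big)\,\|\varepsilon_h(\bm{u}_{S,h})\|_{0,\Omega_S}.
\]
For the edge sum, I will apply Cauchy--Schwarz and then the standard discrete trace (scaled) inequality for polynomials to exchange the weighted edge norm of $\avg{\varepsilon_h(\bm{u}_{S,h})\bm{n}}$ for a volumetric norm:
\[
\sum_{e\in\mathcal{F}_{h,S}^0}h_e\|\avg{\varepsilon_h(\bm{u}_{S,h})\bm{n}}\|_{0,e}^2 \leq C\,\|\varepsilon_h(\bm{u}_{S,h})\|_{0,\Omega_S}^2,
\]
so the edge term is bounded by $C\|\varepsilon_h(\bm{u}_{S,h})\|_{0,\Omega_S}\big(\sum_{e}h_e^{-1}\|\jump{\bm{u}_{S,h}}\|_{0,e}^2\big)^{1/2}$.

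Combining the two estimates yields an inequality of the form $\|\varepsilon_h(\bm{u}_{S,h})\|_{0,\Omega_S}^2 \leq A\,\|\varepsilon_h(\bm{u}_{S,h})\|_{0,\Omega_S}$ where $A$ equals the right-hand side of the claim; dividing through by $\|\varepsilon_h(\bm{u}_{S,h})\|_{0,\Omega_S}$ (or handling the degenerate case separately) delivers the result. I do not anticipate a serious obstacle: the only subtle point is verifying membership of the test tensor in $\Sigma_h^S$ (symmetry plus the correct polynomial degree), which is immediate, and the trace inequality used is the standard shape-regular scaled one.
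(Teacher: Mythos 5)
Your proposal is correct and follows essentially the same route as the paper: both test with $\underline{\psi}=\varepsilon_h(\bm{u}_{S,h})\in\Sigma_h^S$, split off the edge term from $a_S$, bound it via Cauchy--Schwarz and the scaled discrete trace inequality, and then either divide by $\|\varepsilon_h(\bm{u}_{S,h})\|_{0,\Omega_S}$ or (as the paper does) absorb with Young's inequality. The paper formally writes $\Pi_h\underline{\psi}$, but since $\underline{\psi}$ already lies in $\Sigma_h^S$ this is the identity, so the arguments coincide.
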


\begin{proof}

We define $\underline{\psi}$ such that $\underline{\psi}=\varepsilon_h(\bm{u}_{S,h})\in \Sigma_h^S$, then it holds
\begin{equation}
\begin{split}
\|\varepsilon_h(\bm{u}_{S,h})\|_{0,\Omega_S}^2&=(\varepsilon_h(\bm{u}_{S,h}),\underline{\psi})_{\Omega_S}\\
&=(\varepsilon_h(\bm{u}_{S,h}),\Pi_h\underline{\psi})_{\Omega_S}-\sum_{e\in \mathcal{F}_{h,S}^0}(\avg{\Pi_h\underline{\psi}\bm{n}},\jump{\bm{u}_{S,h}})_e+\sum_{e\in \mathcal{F}_{h,S}^0}(\avg{\Pi_h\underline{\psi}\bm{n}},\jump{\bm{u}_{S,h}})_e\\
&=a_S(\Pi_h \underline{\psi},\bm{u}_{S,h})+\sum_{e\in \mathcal{F}_{h,S}^0}(\avg{\Pi_h\underline{\psi}\bm{n}},\jump{\bm{u}_{S,h}})_e.
\end{split}
\label{eq:uSh}
\end{equation}
The first term can be bounded by using the boundedness of $\Pi_h$
\begin{align*}
a_S(\Pi_h \underline{\psi},\bm{u}_{S,h})&= \frac{a_S(\Pi_h \underline{\psi},\bm{u}_{S,h})}{\|\Pi_h\underline{\psi}\|_{0,\Omega_S}}\|\Pi_h\underline{\psi}\|_{0,\Omega_S}\\
&\leq C \frac{a_S(\Pi_h \underline{\psi},\bm{u}_{S,h})}{\|\Pi_h\underline{\psi}\|_{0,\Omega_S}}\|\varepsilon_h(\bm{u}_{S,h})\|_{0,\Omega_S}\\
&\leq C \frac{a_S(\Pi_h \underline{\psi},\bm{u}_{S,h})}{\|\underline{\psi}\|_{0,\Omega_S}}\|\varepsilon_h(\bm{u}_{S,h})\|_{0,\Omega_S}.
\end{align*}
We can estimate the second term on the right hand side of \eqref{eq:uSh} via an application of the trace inequality
\begin{align*}
\sum_{e\in \mathcal{F}_{h,S}^0}(\avg{\Pi_h\underline{\psi}\bm{n}},\jump{\bm{u}_{S,h}})_e&\leq \epsilon\|\Pi_h\underline{\psi}\|_{0,\Omega_S}^2+\frac{1}{\epsilon}\sum_{e\in \mathcal{F}_{h,S}^0}h_e^{-1}\|\jump{\bm{u}_{S,h}}\|_{0,e}^2\\
&\leq \epsilon\|\varepsilon_h(\bm{u}_{S,h})\|_{0,\Omega_S}^2+\frac{1}{\epsilon}\sum_{e\in \mathcal{F}_{h,S}^0}h_e^{-1}\|\jump{\bm{u}_{S,h}}\|_{0,e}^2.
\end{align*}
Then taking $\epsilon$ small enough and using Young's inequality imply the desired estimate.

\end{proof}

The rest of this section is devoted to the proof for the convergence of Algorithm DDM. To simplify the notation, we let
$\eta_{S}^m=g_{S,h}-g_{S,h}^m$, $\eta_{D}^m=g_{D,h}-g_{D,h}^m$, $e_{\sigma}^m =\underline{\sigma}_h-\underline{\sigma}_h^m$, $e_{S}^m=\bm{u}_{S,h}-\bm{u}_{S,h}^m$, $e_D^m=\bm{u}_{D,h}-\bm{u}_{D,h}^m$ and $\varepsilon_D^m = p_{D,h}-p_{D,h}^m$.

%follow \cite{Chendd11,Sun21}
\begin{lemma}

The following identities are satisfied
\begin{equation}
\begin{split}
\|\eta_S^{m+1}\|_{0,\Gamma}^2
=\|\varepsilon_D^m\|_{0,\Gamma}^2(1-(\frac{\delta_f}{\delta_p})^2)-2(1+\frac{\delta_f}{\delta_p})\delta_f \|K^{-\frac{1}{2}} e_D^m\|_{0,\Omega_D}^2+(\frac{\delta_f}{\delta_p})^2\|\eta_D^m\|_{0,\Gamma}^2
\end{split}
\label{eq:etaSk}
\end{equation}
and
\begin{equation}
\begin{split}
\|\eta_D^{m+1}\|_{0,\Gamma}^2
&=\|\eta_S^m\|_{0,\Gamma}^2-2(\delta_f+\delta_p)\|(2\mu)^{-\frac{1}{2}}\mathcal{A}e_\sigma^m\|_{0,\Omega_S}^2-2(\delta_f+\delta_p)\sum_{e\in \mathcal{F}_{h,S}^0}\gamma h_e^{-1}\|\jump{e_S^m}\|_{0,e}^2\\
&\;-\frac{2(\delta_f+\delta_p)}{G}\|e_S^m\cdot\bm{t}_S\|_{0,\Gamma}^2+(\delta_p^2-\delta_f^2)\|e_{S}^m\cdot\bm{n}_S\|_{0,\Gamma}^2.
\end{split}
\label{eq:etaD}
\end{equation}

\end{lemma}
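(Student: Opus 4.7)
The plan is to write both identities as expansions of the $L^2$-norms of the updates, and then reduce the resulting cross terms to the prescribed right-hand sides using energy identities obtained from the Darcy and Stokes error equations, respectively.

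\textbf{Step 1: Translate the update rules into error recursions.} Subtracting the Step~3 updates from the limiting relations $g_{S,h}=p_{D,h}(1+\delta_f/\delta_p)-g_{D,h}(\delta_f/\delta_p)$ and $g_{D,h}=g_{S,h}+(\delta_f+\delta_p)\bm{u}_{S,h}\cdot\bm{n}_S$ (which hold because $(\widetilde{\underline{\sigma}}_h,\widetilde{\bm{u}}_{S,h},\widetilde{\bm{u}}_{D,h},\widetilde{p}_{D,h})$ coincides with the original solution once the compatibility conditions are enforced) gives
\[
\eta_S^{m+1}=\Bigl(1+\tfrac{\delta_f}{\delta_p}\Bigr)\varepsilon_D^m-\tfrac{\delta_f}{\delta_p}\eta_D^m,\qquad \eta_D^{m+1}=\eta_S^m+(\delta_f+\delta_p)\,e_S^m\cdot\bm{n}_S.
\]
Taking $\|\cdot\|_{0,\Gamma}^2$ of each yields two expressions containing a cross term that I need to rewrite.

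\textbf{Step 2: Derive the Darcy energy identity for \eqref{eq:etaSk}.} Subtracting \eqref{eq:Darcy-robink1}--\eqref{eq:Darcy-robink2} at step $m$ from the limit formulation \eqref{eq:Darcy-robin1}--\eqref{eq:Darcy-robin2} produces
\[
(K^{-1}e_D^m,\bm{v}_D)_{\Omega_D}-b_D^*(\varepsilon_D^m,\bm{v}_D)=0,\qquad b_D(e_D^m,q_D)=\tfrac{1}{\delta_p}(\eta_D^m-\varepsilon_D^m,q_D)_\Gamma.
\]
Choosing $\bm{v}_D=e_D^m$, $q_D=\varepsilon_D^m$ and using the discrete adjoint property \eqref{eq:adjointD2} gives
\[
\|K^{-1/2}e_D^m\|_{0,\Omega_D}^2=\tfrac{1}{\delta_p}(\eta_D^m-\varepsilon_D^m,\varepsilon_D^m)_\Gamma.
\]
Inserting this into the expansion of $\|\eta_S^{m+1}\|_{0,\Gamma}^2$ and using the elementary algebraic identity $(1+\delta_f/\delta_p)^2-\bigl(1-(\delta_f/\delta_p)^2\bigr)=2(\delta_f/\delta_p)(1+\delta_f/\delta_p)$ collapses everything to \eqref{eq:etaSk}.

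\textbf{Step 3: Derive the Stokes energy identity for \eqref{eq:etaD}.} Subtracting \eqref{eq:Stokes-robink1}--\eqref{eq:Stokes-robink2} from the limiting Stokes system and testing with $\underline{w}=e_\sigma^m$, $\bm{v}_S=e_S^m$ yields, after using the self-adjointness and idempotence of $\mathcal{A}$ to obtain $((2\mu)^{-1}\mathcal{A}e_\sigma^m,e_\sigma^m)_{\Omega_S}=\|(2\mu)^{-1/2}\mathcal{A}e_\sigma^m\|_{0,\Omega_S}^2$, the single identity
\[
\|(2\mu)^{-1/2}\mathcal{A}e_\sigma^m\|_{0,\Omega_S}^2+\!\!\sum_{e\in\mathcal{F}_{h,S}^0}\!\!\gamma h_e^{-1}\|\jump{e_S^m}\|_{0,e}^2+\delta_f\|e_S^m\cdot\bm{n}_S\|_{0,\Gamma}^2+\tfrac{1}{G}\|e_S^m\cdot\bm{t}_S\|_{0,\Gamma}^2=-(\eta_S^m,e_S^m\cdot\bm{n}_S)_\Gamma.
\]
Substituting this into the expansion of $\|\eta_D^{m+1}\|_{0,\Gamma}^2$ and rearranging $(\delta_f+\delta_p)^2=(\delta_p^2-\delta_f^2)+2\delta_f(\delta_f+\delta_p)$ to absorb the $\|e_S^m\cdot\bm{n}_S\|_{0,\Gamma}^2$-term yields \eqref{eq:etaD}.

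\textbf{Anticipated difficulty.} The manipulations are essentially algebraic once the two energy identities are in hand, so the only subtle point is keeping track of the sign conventions in $b_D$ versus $b_D^*$ and verifying $(\mathcal{A}\underline{\tau},\underline{\tau})_{\Omega_S}=\|\mathcal{A}\underline{\tau}\|_{0,\Omega_S}^2$ (which follows from $\mathcal{A}^2=\mathcal{A}$ and $\mathcal{A}^*=\mathcal{A}$); both are routine. Thus the proof is purely a bookkeeping exercise of squaring the update rules and substituting the two energy equalities.
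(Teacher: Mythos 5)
Your proposal is correct and follows essentially the same route as the paper: derive the error recursions for $\eta_S^{m+1},\eta_D^{m+1}$ from the Step~3 updates, obtain the two energy identities by testing the subtracted Darcy and Stokes error equations with $(e_D^m,\varepsilon_D^m)$ and $(e_\sigma^m,e_S^m)$, and then expand the squared norms and substitute; the algebraic reductions you record (including $(\delta_f+\delta_p)^2-2\delta_f(\delta_f+\delta_p)=\delta_p^2-\delta_f^2$ and the projection property $(\mathcal{A}\underline{\tau},\underline{\tau})=\|\mathcal{A}\underline{\tau}\|^2$, which the paper uses implicitly) all check out.
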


\begin{proof}

Subtracting \eqref{eq:Darcy-robink1}-\eqref{eq:Darcy-robink2} from \eqref{eq:Darcy-robin1}-\eqref{eq:Darcy-robin2}, and \eqref{eq:Stokes-robink1}-\eqref{eq:Stokes-robink2} from \eqref{eq:Stokes-robin1}-\eqref{eq:Stokes-robin2} yields
\begin{align}
(K^{-1} e_{D}^m,\bm{v}_D)_{\Omega_D}-b_{D}^*(\varepsilon_D^m,\bm{v}_D)&=0\quad \forall \bm{v}_D\in U_h^D,\label{eq:errorkD1}\\
b_{D}(e_{D}^m,q_D)+\sum_{e\in \mathcal{F}_{h,\Gamma}}\frac{1}{\delta_p}(\varepsilon_D^m,q_D)_{e}&=\sum_{e\in \mathcal{F}_{h,\Gamma}}\frac{1}{\delta_p}(\eta_D^m,q_D)_{e}\quad \forall q_D\in P_h^D\label{eq:errorkD2}
\end{align}
and
\begin{align}
&((2\mu)^{-1}\mathcal{A} e_{\sigma}^m,\underline{w})_{\Omega_S}=a_S(\underline{w},e_{S}^m)\quad \forall \underline{w}\in \Sigma_h^S,\label{eq:sigmak}\\
&a_S(e_\sigma^m,\bm{v}_S)+\sum_{e\in \mathcal{F}_{h,S}^0}\gamma h_e^{-1} (\jump{e_S^m},\jump{\bm{v}_S})_e+\sum_{e\in \mathcal{F}_{h,\Gamma}}\delta_f (e_S^m\cdot\bm{n}_S,\bm{v}_S\cdot\bm{n}_S)_{e}\nonumber\\
&\;+\sum_{e\in \mathcal{F}_{h,\Gamma}}\frac{1}{G}(e_S^m\cdot\bm{t}_S,\bm{v}_S\cdot\bm{t}_S)_{e}
=-\sum_{e\in \mathcal{F}_{h,\Gamma}}(\eta_S^m,\bm{v}_S\cdot\bm{n}_S)_{e}\quad \forall \bm{v}_S\in U_h^S\label{eq:usk}.
\end{align}
Taking $\bm{v}_D=e_D^m,q_D=\varepsilon_D^m,\bm{v}_S=e_S^m,\underline{w}=e_\sigma^m$ in the above equations and summing up the resulting equations, we can obtain
\begin{align}
&\|K^{-\frac{1}{2}} e_D^m\|_{0,\Omega_D}^2+\sum_{e\in \mathcal{F}_{h,\Gamma}}\frac{1}{\delta_p}\|\varepsilon_D^m\|_{0,e}^2=\sum_{e\in \mathcal{F}_{h,\Gamma}}\frac{1}{\delta_p}(\eta_D^m,\varepsilon_D^m)_{e},\label{eq:eDk1}\\
&\|(2\mu)^{-\frac{1}{2}}\mathcal{A}e_\sigma^m\|_{0,\Omega_S}^2+\sum_{e\in \mathcal{F}_{h,S}^0}\gamma h_e^{-1}\|\jump{e_S^m}\|_{0,e}^2+\sum_{e\in \mathcal{F}_{h,\Gamma}}\delta_f \|e_S^m\cdot\bm{n}_S\|_{0,e}^2\nonumber\\
&\;+\sum_{e\in \mathcal{F}_{h,\Gamma}}\frac{1}{G}\|e_S^m\cdot\bm{t}_S\|_{0,e}^2=-\sum_{e\in \mathcal{F}_{h,\Gamma}}(\eta_S^m,e_S^m\cdot\bm{n}_{S})_{e}.\label{eq:eDk2}
\end{align}
The following identities are satisfied on the interface $\Gamma$
\begin{align*}
\eta_S^{m+1}&=\varepsilon_D^m(1+\frac{\delta_f}{\delta_p})-\eta_D^m\frac{\delta_f}{\delta_p} ,\\
\eta_D^{m+1}&=\eta_S^m+(\delta_f+\delta_p)e_S^m\cdot\bm{n}_S.
\end{align*}
Therefore, we can infer from \eqref{eq:eDk1} and \eqref{eq:eDk2} that
\begin{align*}
\|\eta_S^{m+1}\|_{0,\Gamma}^2&=\|\varepsilon_D^m(1+\frac{\delta_f}{\delta_p})-\eta_D^m\frac{\delta_f}{\delta_p}\|_{0,\Gamma}^2\\
&=\|\varepsilon_D^m\|_{0,\Gamma}^2(1-(\frac{\delta_f}{\delta_p})^2)-2(1+\frac{\delta_f}{\delta_p})\delta_f \|K^{-\frac{1}{2}} e_D^m\|_{0,\Omega_D}^2+(\frac{\delta_f}{\delta_p})^2\|\eta_D^m\|_{0,\Gamma}^2
\end{align*}
and
\begin{align*}
\|\eta_D^{m+1}\|_{0,\Gamma}^2&=\|\eta_S^m+(\delta_f+\delta_p)e_S^m\cdot\bm{n}_S\|_{0,\Gamma}^2\\
&=\|\eta_S^m\|_{0,\Gamma}^2-2(\delta_f+\delta_p)\|(2\mu)^{-\frac{1}{2}}\mathcal{A}e_\sigma^m\|_{0,\Omega_S}^2-2(\delta_f+\delta_p)\sum_{e\in \mathcal{F}_{h,S}^0}\gamma h_e^{-1}\|\jump{e_S^m}\|_{0,e}^2\\
&\;-2(\delta_f+\delta_p)\delta_f \|e_S^m\cdot\bm{n}_S\|_{0,\Gamma}^2-\frac{2(\delta_f+\delta_p)}{G}\|e_S^m\cdot\bm{t}_S\|_{0,\Gamma}^2
+(\delta_f+\delta_p)^2\|e_{S}^m\cdot\bm{n}_S\|_{0,\Gamma}^2\\
&=\|\eta_S^m\|_{0,\Gamma}^2-2(\delta_f+\delta_p)\|(2\mu)^{-\frac{1}{2}}\mathcal{A}e_\sigma^m\|_{0,\Omega_S}^2-2(\delta_f+\delta_p)\sum_{e\in \mathcal{F}_{h,S}^0}\gamma h_e^{-1}\|\jump{e_S^m}\|_{0,e}^2\\
&\;-\frac{2(\delta_f+\delta_p)}{G}\|e_S^m\cdot\bm{t}_S\|_{0,\Gamma}^2+(\delta_p^2-\delta_f^2)\|e_{S}^k\cdot\bm{n}_S\|_{0,\Gamma}^2,
\end{align*}
which implies the desired estimates.

\end{proof}

\begin{theorem}\label{thm:convergence-DDM}
If $\delta_p=\delta_f=\delta$, then the solution of Algorithm DDM converges to the solution of the Stokes-Darcy system \eqref{eq:discrete1}-\eqref{eq:discrete3}. Moreover, the following estimate holds
\begin{align*}
&\|\mu^{-\frac{1}{2}}\mathcal{A} e_{\sigma}^m\|_{0,\Omega_S}^2+\|K^{-\frac{1}{2}} e_{D}^m\|_{0,\Omega_D}^2+\|e_S^m\|_{1,h}^2+\|\eta_D^{m+1}\|_{0,\Gamma}^2+\|\eta_{S}^{m+1}\|_{0,\Gamma}^2\\
&\leq
 C(1-Ch(h+C\delta K)^{-1})^{[\frac{m}{2}]}\Big(\|\eta_D^{0}\|_{0,\Gamma}^2+\|\eta_S^{0}\|_{0,\Gamma}^2\Big).
\end{align*}
If $0<\delta_p-\delta_f\leq \frac{\min\{\sqrt{2}\mu,1/\gamma\}}{C_S^2}$ and $\frac{1}{\delta_f}-\frac{1}{\delta_p}\leq \frac{2}{C_p^2K}$, then the solution of Algorithm DDM converges to the solution of the Stokes-Darcy system \eqref{eq:discrete1}-\eqref{eq:discrete3}. In addition, the following estimate holds
\begin{align*}
&\|\mu^{-\frac{1}{2}}\mathcal{A} e_{\sigma}^m\|_{0,\Omega_S}^2+\|K^{-\frac{1}{2}} e_{D}^m\|_{0,\Omega_D}^2+\|e_S^m\|_{1,h}^2+\|\eta_D^{m+1}\|_{0,\Gamma}^2+\|\eta_{S}^{m+1}\|_{0,\Gamma}^2\\
&\leq C
(\frac{\delta_f}{\delta_p})^{m}\Big(\|\eta_D^{0}\|_{0,\Gamma}^2+\|\eta_S^{0}\|_{0,\Gamma}^2\Big).
\end{align*}

\end{theorem}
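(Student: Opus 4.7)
The plan is to start from the identities \eqref{eq:etaSk} and \eqref{eq:etaD} and combine them differently in the two cases; in both situations the main effort is to convert the bulk subproblem-dissipation quantities $\|\mathcal{A}e_\sigma^m\|$, $\|K^{-1/2}e_D^m\|$, the jump seminorm, and the tangential trace into a contraction factor for the interface-data energy $\Xi^m := \|\eta_S^m\|_{0,\Gamma}^2 + \|\eta_D^m\|_{0,\Gamma}^2$, using the stability of the Robin-Stokes and Robin-Darcy discrete subproblems.

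For the first case $\delta_p=\delta_f=\delta$, the coefficients $1-(\delta_f/\delta_p)^2$ and $\delta_p^2-\delta_f^2$ in \eqref{eq:etaSk}--\eqref{eq:etaD} both vanish, so summing the two identities produces the clean energy balance $\Xi^{m+1} = \Xi^m - 4\delta D^m$, where $D^m$ is the sum of all four positive subproblem-dissipation terms. To extract a geometric rate I would invert the one-step update rules $\eta_S^{m+1} = 2\varepsilon_D^m-\eta_D^m$ and $\eta_D^{m+1}=\eta_S^m+2\delta\,e_S^m\cdot\bm{n}_S$ to obtain
\begin{equation*}
\Xi^m \leq 2\Xi^{m+1} + 8\|\varepsilon_D^m\|_{0,\Gamma}^2 + 8\delta^2\|e_S^m\cdot\bm{n}_S\|_{0,\Gamma}^2.
\end{equation*}
The term $\|\varepsilon_D^m\|_{0,\Gamma}^2$ is controlled by the Darcy inf-sup condition \eqref{eq:inf-sup} together with the adjoint relation \eqref{eq:adjointD2} applied to \eqref{eq:errorkD1}, giving $\|\varepsilon_D^m\|_{0,\Gamma}^2 \leq CK^{-1}\|K^{-1/2}e_D^m\|_{0,\Omega_D}^2$, while $\|e_S^m\cdot\bm{n}_S\|_{0,\Gamma}^2$ is controlled via \eqref{eq:sigmak}, Lemma~\ref{lemma:graduSh}, discrete Korn and the discrete trace inequality, giving $\|e_S^m\cdot\bm{n}_S\|_{0,\Gamma}^2 \leq C h^{-1}\bigl(\mu^{-1}\|(2\mu)^{-1/2}\mathcal{A}e_\sigma^m\|_{0,\Omega_S}^2 + \gamma^{-1}\sum_e h_e^{-1}\|\jump{e_S^m}\|_{0,e}^2\bigr)$. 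Combining the two bounds with the one-step equality $D^m=(\Xi^m-\Xi^{m+1})/(4\delta)$ and iterating twice (to absorb the prefactor $2$ appearing in $2\Xi^{m+1}$) then yields the two-step contraction $\Xi^{m+2}\leq (1-Ch(h+C\delta K)^{-1})\Xi^m$, which produces the claimed rate raised to $[m/2]$.

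For the second case I would read the two identities separately rather than summing them. In \eqref{eq:etaSk}, the harmful term $(1-(\delta_f/\delta_p)^2)\|\varepsilon_D^m\|_{0,\Gamma}^2$ is absorbed into the dissipative term $-2(1+\delta_f/\delta_p)\delta_f\|K^{-1/2}e_D^m\|_{0,\Omega_D}^2$ precisely when $(1/\delta_f-1/\delta_p)\|\varepsilon_D^m\|_{0,\Gamma}^2\leq 2\|K^{-1/2}e_D^m\|_{0,\Omega_D}^2$; inserting the Darcy stability bound $\|\varepsilon_D^m\|_{0,\Gamma}^2\leq C_p^2 K^{-1}\|K^{-1/2}e_D^m\|_{0,\Omega_D}^2$ gives exactly the assumption $1/\delta_f-1/\delta_p\leq 2/(C_p^2 K)$, after which \eqref{eq:etaSk} reduces to $\|\eta_S^{m+1}\|_{0,\Gamma}^2\leq (\delta_f/\delta_p)^2\|\eta_D^m\|_{0,\Gamma}^2$. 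Symmetrically, the positive term $(\delta_p^2-\delta_f^2)\|e_S^m\cdot\bm{n}_S\|_{0,\Gamma}^2$ in \eqref{eq:etaD} is absorbed by the Stokes dissipation under the assumption $\delta_p-\delta_f\leq \min\{\sqrt{2}\mu,1/\gamma\}/C_S^2$, where $C_S$ is the combined Korn/trace stability constant implicit in Lemma~\ref{lemma:graduSh}, yielding $\|\eta_D^{m+1}\|_{0,\Gamma}^2\leq\|\eta_S^m\|_{0,\Gamma}^2$. Chaining these two bounds across successive iterations produces geometric decay with ratio $\delta_f/\delta_p$, and the associated bulk-error estimates are recovered a posteriori from the coercivity identities \eqref{eq:eDk1}--\eqref{eq:eDk2}.

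The main obstacle is Case~1: the dissipation $D^m$ measures bulk $L^2$ energy and discrete jumps, whereas the contraction measure $\Xi^m$ is an $L^2$ interface-boundary norm, so the passage from one to the other requires a discrete inverse-trace inequality on the Stokes side and produces the factor $h^{-1}$ responsible for the $h$-dependence in the contraction rate. Case~2 is analytically simpler but requires careful tracking of the stability constants $C_p$ and $C_S$ so that the stated smallness conditions give strict absorption rather than mere boundedness.
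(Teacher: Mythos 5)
Your second case ($\delta_f<\delta_p$) is essentially the paper's own argument: you absorb $(1-(\delta_f/\delta_p)^2)\|\varepsilon_D^m\|_{0,\Gamma}^2$ into the Darcy dissipation via the inf-sup/trace bound $\|\varepsilon_D^m\|_{0,\Gamma}\leq C\|\varepsilon_D^m\|_Z\leq C C_pK^{-1/2}\|K^{-1/2}e_D^m\|_{0,\Omega_D}$, absorb $(\delta_p^2-\delta_f^2)\|e_S^m\cdot\bm{n}_S\|_{0,\Gamma}^2$ into the Stokes dissipation via \eqref{eq:esk}, and chain $\|\eta_S^{m+1}\|_{0,\Gamma}^2\leq(\delta_f/\delta_p)^2\|\eta_D^m\|_{0,\Gamma}^2$ with $\|\eta_D^{m+1}\|_{0,\Gamma}\leq\|\eta_S^m\|_{0,\Gamma}$. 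That part is sound and matches the paper.

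The first case ($\delta_p=\delta_f=\delta$) has a genuine gap, precisely at the step you flag as the main obstacle. Your two ingredients are the energy identity $\Xi^{m+1}=\Xi^m-4\delta D^m$ and the inverted-update bound $\Xi^m\leq 2\Xi^{m+1}+8\|\varepsilon_D^m\|_{0,\Gamma}^2+8\delta^2\|e_S^m\cdot\bm{n}_S\|_{0,\Gamma}^2$, after which you bound the two extra terms from above by $C_1D^m$. These two facts together cannot yield a contraction: substituting $D^m=(\Xi^m-\Xi^{m+1})/(4\delta)$ into $\Xi^m\leq2\Xi^{m+1}+C_1D^m$ gives $(1-\tfrac{C_1}{4\delta})\Xi^m\leq(2-\tfrac{C_1}{4\delta})\Xi^{m+1}$, which for every value of $C_1/(4\delta)$ is either a lower bound on $\Xi^{m+1}$ or an upper bound with factor $\geq 1$; more bluntly, both statements are satisfied by $D^m\equiv 0$ and $\Xi^m\equiv\Xi^0$, so no decay can follow from them alone. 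The failure is directional: everything you establish bounds interface traces of the subproblem errors ($\varepsilon_D^m$, $e_S^m\cdot\bm{n}_S$) from above by the bulk dissipation, whereas the contraction requires the reverse, namely a lower bound on the dissipation in terms of the Robin-data error $\|\eta_D^m\|_{0,\Gamma}$. The paper supplies this with a separate device you are missing: a discrete extension $E_{D,h}\eta_D^m\in P_h^D$ equal to $\eta_D^m$ at the interface degrees of freedom and zero at all interior ones, satisfying $\|E_{D,h}\eta_D^m\|_{0,\Omega_D}^2\leq Ch\|\eta_D^m\|_{0,\Gamma}^2$ \eqref{eq:EDh}. Testing \eqref{eq:errorkD1}--\eqref{eq:errorkD2} with $q_D=E_{D,h}\eta_D^m$ makes $\tfrac{1}{\delta_p}\|\eta_D^m\|_{0,\Gamma}^2$ appear exactly, and estimating the remaining terms (the inverse inequality gives $\|E_{D,h}\eta_D^m\|_Z\leq Ch^{-1/2}\|\eta_D^m\|_{0,\Gamma}$) yields $\|\eta_D^m\|_{0,\Gamma}^2\leq C\delta(1+Ch^{-1}\delta K)\|K^{-\frac12}e_D^m\|_{0,\Omega_D}^2$ \eqref{eq:etaDkeDk}; inserted into \eqref{eq:delta-equal1} this gives $\|\eta_S^{m+1}\|_{0,\Gamma}^2\leq(1-Ch(h+C\delta K)^{-1})\|\eta_D^m\|_{0,\Gamma}^2$, and the two-step contraction follows from $\|\eta_D^{m+1}\|_{0,\Gamma}\leq\|\eta_S^m\|_{0,\Gamma}$. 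Note also that the $h$-dependence of the rate therefore originates on the Darcy side (the inverse inequality applied to the extension), not from an inverse-trace inequality on the Stokes side as you suggest; \eqref{eq:esk} controls $\|e_S^m\cdot\bm{n}_S\|_{0,\Gamma}$ by the dissipation with no factor of $h^{-1}$.
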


\begin{proof}

Let $\mathcal{N}_{D,h}$ denote the set of nodes corresponding to the degrees of freedom specified in (SD1)-(SD2) in $\Omega_D$ and let $\mathcal{N}_{D,\Gamma}=\mathcal{N}_{D,h}|{\Gamma}$. We define an extension operator $E_{D,h}$, which satisfies
\begin{align*}
E_{D,h}\eta_D^m(P)=
\begin{cases}
\eta_D^m\quad\; \mbox{if}\;P\in \mathcal{N}_{D,\Gamma},\\
0\qquad \mbox{if}\;P\in \mathcal{N}_{D,h}\backslash\mathcal{N}_{D,\Gamma}.
\end{cases}
\end{align*}
Note that $E_{D,h}\eta_D^m\in P_h^D$ and an application of scaling arguments implies that
\begin{align}
\|E_{D,h}\eta_D^m\|_{0,\Omega_D}^2\leq C h\|\eta_D^m\|_{0,\Gamma}^2.\label{eq:EDh}
\end{align}
Then setting $q_D=E_{D,h} \eta_D^m$ and $\bm{v}_D=e_D^m$ in \eqref{eq:errorkD1}-\eqref{eq:errorkD2} and summing up the resulting equations, we can get
\begin{align}
\frac{1}{\delta_p}\|\eta_D^m\|_{0,\Gamma}^2=\|K^{-\frac{1}{2}} e_D^m\|_{0,\Omega_D}^2
-b_{D}^*(\varepsilon_D^m,e_D^m)+b_{D}(e_D^m,E_{D,h}\eta_D^m)+\frac{1}{\delta_p}(\varepsilon_D^m,\eta_D^m)_\Gamma.\label{eq:etaDk}
\end{align}
The inf-sup condition \eqref{eq:inf-sup} and \eqref{eq:errorkD1} imply that
\begin{align}
\|\varepsilon_D^m\|_{Z}\leq C_pK^{-\frac{1}{2}}\|K^{-\frac{1}{2}}(\bm{u}_{D,h}-\bm{u}_{D,h}^m)\|_{0,\Omega_D}.\label{eq:varepsilonDk}
\end{align}
The trace inequality implies that
\begin{align}
\|\varepsilon_D^m\|_{0,\Gamma}\leq C \|\varepsilon_D^m\|_Z.\label{eq:varepsilonDk2}
\end{align}
An application of the inverse inequality and \eqref{eq:EDh} reveals that
\begin{align*}
\|E_{D,h}\eta_D^m\|_{Z}\leq C h^{-1}\|E_{D,h}\eta_{D}^m\|_{0,\Omega_D}\leq C h^{-\frac{1}{2}}\|\eta_{D}^m\|_{0,\Gamma}.
\end{align*}
Thereby, we have from \eqref{eq:etaDk} and the Cauchy-Schwarz inequality that
\begin{equation}
\begin{split}
\frac{1}{\delta_p}\|\eta_D^m\|_{0,\Gamma}^2&\leq \|K^{-\frac{1}{2}} e_D^m\|_{0,\Omega_D}^2+C\|\varepsilon_D^m\|_Z\|e_D^m\|_{0,\Omega_D}+C\|e_D^m\|_0\|E_{D,h}\eta_D^m\|_{Z}+
\frac{1}{\delta_p}\|\varepsilon_D^m\|_{0,\Gamma}\|\eta_D^m\|_{0,\Gamma}\\
 &\leq C(1+Ch^{-1}\delta_pK)\|K^{-\frac{1}{2}}e_D^m\|_{0,\Omega_D}^2.
 \end{split}
 \label{eq:etaDkeDk}
\end{equation}
If $\delta_p=\delta_f=\delta$, it follows from \eqref{eq:etaSk} and \eqref{eq:etaD} that
\begin{align}
\|\eta_S^{m+1}\|_{0,\Gamma}^2&=-4 \delta\|K^{-\frac{1}{2}} e_D^m\|_{0,\Omega_D}^2+\|\eta_D^m\|_{0,\Gamma}^2\label{eq:delta-equal1},\\
\|\eta_D^{m+1}\|_{0,\Gamma}^2&=\|\eta_S^m\|_{0,\Gamma}^2-4\delta\|(2\mu)^{-\frac{1}{2}}e_\sigma^m\|_{0,\Omega_S}^2
-\frac{4\delta}{G}\|e_S^m\cdot\bm{t}_S\|_{0,\Gamma}^2-4\delta\sum_{e\in \mathcal{F}_{h,S}^0}\gamma h_e^{-1}\|\jump{e_S^m}\|_{0,e}^2.\label{eq:delta-equal2}
\end{align}
We can infer from \eqref{eq:delta-equal2} that
\begin{align}
\|\eta_D^{m+1}\|_{0,\Gamma}\leq \|\eta_S^m\|_{0,\Gamma}.\label{eq:etaDetaS}
\end{align}
Now it remains to estimate \eqref{eq:delta-equal1}.

Combining \eqref{eq:etaDkeDk} with \eqref{eq:delta-equal1} yields
\begin{align*}
\|\eta_S^{m+1}\|_{0,\Gamma}^2\leq \|\eta_D^m\|_{0,\Gamma}^2(1-4 (1+Ch^{-1}\delta K)^{-1}).
\end{align*}
which can be combined with \eqref{eq:etaDetaS} implies
\begin{align*}
\|\eta_D^{m+1}\|_{0,\Gamma}^2&\leq (1-Ch (h+\delta K)^{-1})\|\eta_D^{m-1}\|_{0,\Gamma}^2,\\
\|\eta_{S}^{m+1}\|_{0,\Gamma}^2&\leq (1-Ch (h+\delta K)^{-1})\|\eta_S^{m-1}\|_{0,\Gamma}^2.
\end{align*}
Thus
\begin{align*}
\|\eta_D^{m+1}\|_{0,\Gamma}^2+\|\eta_{S}^{m+1}\|_{0,\Gamma}^2&\leq
 (1-Ch(h+\delta K)^{-1})^{[\frac{m}{2}]}\Big(\|\eta_D^{0}\|_{0,\Gamma}^2+\|\eta_S^{0}\|_{0,\Gamma}^2\Big).
\end{align*}
Then taking $\bm{v}_D=e_D^m$ and $q_D=\varepsilon_D^m$ in \eqref{eq:errorkD1}-\eqref{eq:errorkD2} and summing up the resulting equations yield
\begin{align*}
\|K^{-\frac{1}{2}} e_{D}^m\|_{0,\Omega_D}^2+\frac{1}{\delta_p}\|\varepsilon_D^m\|_{0,\Gamma}^2=\frac{1}{\delta_p}(\eta_D^m,\varepsilon_D^m)_{\Gamma}.
\end{align*}
Thus
\begin{align*}
\|K^{-\frac{1}{2}} e_{D}^m\|_{0,\Omega_D}^2+\frac{1}{\delta_p}\|\varepsilon_D^m\|_{0,\Gamma}^2\leq C\|\eta_D^m\|_{0,\Gamma}^2.
\end{align*}
Taking $\underline{w}=e_{\sigma}^m, \bm{v}_S=e_S^m$ in \eqref{eq:sigmak}-\eqref{eq:usk} and summing up the resulting equations, we can obtain
\begin{align*}
\|(2\mu)^{-\frac{1}{2}}\mathcal{A} e_{\sigma}^m\|_{0,\Omega_S}^2+\sum_{e\in \mathcal{F}_{h,S}^0}\gamma h_e^{-1} ||\jump{e_S^m}\|_{0,e}^2
+\delta_f \|e_S^m\cdot\bm{n}_S\|_{0,\Gamma}^2+
\frac{1}{G}\|e_S^m\cdot\bm{t}_S\|_{0,\Gamma}^2\leq \|\eta_S^m\|_{0,\Gamma}\|\bm{e}_S^m\cdot\bm{n}_S\|_{0,\Gamma}.
\end{align*}
The discrete Korn's inequality, the trace inequality, \eqref{eq:Stokes-robink1} and Lemma~\ref{lemma:graduSh} yield
\begin{align}
\|\bm{e}_S^m\cdot\bm{n}_S\|_{0,\Gamma}\leq \|e_S^m\|_{1,h}\leq C_S\Big(\|\mu^{-1}\mathcal{A}e_\sigma^m\|_{0,\Omega_S}+\Big(\sum_{e\in \mathcal{F}_{h,S}^0}h_e^{-1}\|\jump{e_S^m}\|_{0,e}^2\Big)^{\frac{1}{2}}\Big).\label{eq:esk}
\end{align}
Thus, Young's inequality implies
\begin{align*}
\|\mu^{-\frac{1}{2}}\mathcal{A} e_{\sigma}^m\|_{0,\Omega_S}^2+\sum_{e\in \mathcal{F}_{h,S}^0}\gamma h_e^{-1} ||\jump{e_S^m}\|_{0,e}^2
+\delta_f \|e_S^m\cdot\bm{n}_S\|_{0,\Gamma}^2+
\frac{1}{G}\|e_S^m\cdot\bm{t}_S\|_{0,\Gamma}^2\leq C\|\eta_S^m\|_{0,\Gamma}^2.
\end{align*}
Combining the preceding arguments, we obtain the following convergence for $\delta_f=\delta_p=\delta$
\begin{equation}
\begin{split}
&\|\mu^{-\frac{1}{2}}\mathcal{A} e_{\sigma}^m\|_{0,\Omega_S}^2+\|K^{-\frac{1}{2}} e_{D}^m\|_{0,\Omega_D}^2+\|e_S^m\|_{1,h}^2+\|\eta_D^{m+1}\|_{0,\Gamma}^2+\|\eta_{S}^{m+1}\|_{0,\Gamma}^2\\
&\leq
 C(1-Ch(h+C\delta K)^{-1})^{[\frac{m}{2}]}\Big(\|\eta_D^{0}\|_{0,\Gamma}^2+\|\eta_S^{0}\|_{0,\Gamma}^2\Big).
\end{split}
\label{eq:etaS}
\end{equation}
If $\delta_f<\delta_p$ and $\delta_p-\delta_f\leq \frac{\min\{\sqrt{2}\mu,1/\gamma\}}{C_S^2} $, we have from \eqref{eq:esk} that
\begin{align*}
(\delta_p^2-\delta_f^2)\|e_{S}^m\cdot\bm{n}_S\|_{0,\Gamma}^2\leq 2(\delta_p+\delta_f)\Big(\|(2\mu)^{-\frac{1}{2}}\mathcal{A}e_\sigma^m\|_{0,\Omega_S}^2
+\sum_{e\in \mathcal{F}_{h,S}^0}\gamma h_e^{-1} ||\jump{e_S^m}\|_{0,e}^2\Big).
\end{align*}
Thereby, it follows from \eqref{eq:etaD} that
\begin{align*}
\|\eta_D^{m+1}\|_{0,\Gamma}\leq \|\eta_S^m\|_{0,\Gamma}.
\end{align*}
On the other hand, we can deduce from \eqref{eq:varepsilonDk}-\eqref{eq:varepsilonDk2} that if $\frac{1}{\delta_f}-\frac{1}{\delta_p}\leq \frac{2K}{C_p^2}$, then we can obtain
\begin{align*}
&\|\varepsilon_D^m\|_{0,\Gamma}^2(1-(\frac{\delta_f}{\delta_p})^2)-2(1+\frac{\delta_f}{\delta_p})\delta_f \|K^{-\frac{1}{2}} e_D^m\|_{0,\Omega_D}^2\\
&\leq\delta_f(1+\frac{\delta_f}{\delta_p})\Big((\frac{1}{\delta_f}-\frac{1}{\delta_p})C_p^2K^{-1}-2\Big)\|K^{-\frac{1}{2}} e_D^m\|_{0,\Omega_D}^2\leq 0.
\end{align*}
Thus, an appeal to \eqref{eq:etaSk} leads to
\begin{align*}
\|\eta_S^{m+1}\|_{0,\Gamma}^2\leq (\frac{\delta_f}{\delta_p})^2\|\eta_D^m\|_{0,\Gamma}^2.
\end{align*}
Then we can get
\begin{align*}
\|\eta_D^{m+1}\|_{0,\Gamma}^2+\|\eta_{S}^{m+1}\|_{0,\Gamma}^2&\leq
C (\frac{\delta_f}{\delta_p})^{m}\Big(\|\eta_D^{0}\|_{0,\Gamma}^2+\|\eta_S^{0}\|_{0,\Gamma}^2\Big).
\end{align*}
Therefore, proceeding similarly to \eqref{eq:etaS}, we can obtain
\begin{align*}
&\|\mu^{-\frac{1}{2}}\mathcal{A} e_{\sigma}^m\|_{0,\Omega_S}^2+\|K^{-\frac{1}{2}} e_{D}^m\|_{0,\Omega_D}^2+\|e_S^m\|_{1,h}^2+\|\eta_D^{m+1}\|_{0,\Gamma}^2+\|\eta_{S}^{m+1}\|_{0,\Gamma}^2\\
&\leq C
(\frac{\delta_f}{\delta_p})^{m}\Big(\|\eta_D^{0}\|_{0,\Gamma}^2+\|\eta_S^{0}\|_{0,\Gamma}^2\Big).
\end{align*}
Therefore, the proof is completed.

%Summing up $k$ yields
%\begin{align*}
%&\|\eta_S^{N+1}\|_{0,\Gamma}^2+\|\eta_D^{N+1}\|_{0,\Gamma}^2+4\delta \sum_{k=0}^N(\|K^{-1/2} e_D^k\|_{0,\Omega_D}^2
%+|\mu^{-1/2}e_\sigma^k\|_{0,\Omega_S}^2+4\sum_{e\in \mathcal{F}_{h,S}^0}\gamma h_e^{-1}\|\jump{e_S^k}\|_{0,e}^2)\\
%&=\|\eta_S^1\|_{0,\Gamma}^2+\|\eta_D^1\|_{0,\Gamma}^2+4\delta (\|K^{-1/2} e_D^0\|_{0,\Omega_D}^2
%+|\mu^{-1/2}e_\sigma^0\|_{0,\Omega_S}^2+4\sum_{e\in \mathcal{F}_{h,S}^0}\gamma h_e^{-1}\|\jump{e_S^0}\|_{0,e}^2)
%\end{align*}
%which implies $e_\sigma^k$ and $e_D^k$ approaches zero in $H^1(\Omega_S)$ and $L^2(\Omega_D)$ as $k$ goes to $\infty$.
%
%
%We can infer from \eqref{eq:sigmak} that
%\begin{align*}
%\|2\mu^{-1/2}e_\sigma^k\|_0\leq \|e_{S}^k\|_{1,h}
%\end{align*}
%
%By inf-sup condition for $b_{D}$ and the discrete Poincar\'{e} inequality, we get the convergence of $\varepsilon_D^k$.
%

\end{proof}

\section{Numerical experiments}\label{sec:numerical}

In this section, several numerical experiments will be presented to show the performance of the proposed scheme. First, we test the convergence of the proposed scheme (cf. \eqref{eq:discrete1}-\eqref{eq:discrete3}) for different polynomial orders. In addition, the robustness of the scheme with respect to different values of $\mu$ is demonstrated. Then, we show the convergence of the Algorithm DDM with respect to different values of $\delta_p$ and $\delta_f$. Note that we use the uniform criss-cross meshes in the following examples and similar performance can be observed for other types of triangular meshes. In the following examples, we set $\gamma=1$. The stopping criterion for the iteration of the
algorithm is selected as a fixed tolerance of $10^{-6}$ for the difference between two successive iterative velocities in
$L^2$-norm, i.e.,
\begin{align*}
\|\bm{u}_{S,h}^{m+1}-\bm{u}_{S,h}^m\|_{0,\Omega_S}+\|\bm{u}_{D,h}^{m+1}-\bm{u}_{D,h}^m\|_{0,\Omega_D}\leq 10^{-6}.
\end{align*}
%
%for test 3, the second case works for $\mu=0.001$ ($\delta_p = \mu,
%\delta_f = 2\mu$ or $\delta_p = 2\mu,
%\delta_f = \mu$). The first works for $\nu=1$.
%The error

\subsection{Example 1}\label{ex1}

In this example, we take $\Omega_D=(0,1)^2$ and $\Omega_S=(0,1)\times (1,2)$. The exact solution is given by
\begin{align*}
 \bm{u}_S=
 \begin{cases}
-\cos(\pi y/2)^2\sin(\pi x/2)\\
\cos(\pi x/2)(\sin(\pi y)+\pi y)/4
\end{cases},\quad p_D=-\pi\cos(\pi x/2)y/4,
\end{align*}
which does not satisfy the interface conditions, in this respect, the discrete formulation shall be adapted to account for this situation. The convergence history for various values of $\mu$ for polynomial orders $k=1,2,3$ are displayed in Tables~\ref{ex1:table1}-\ref{ex1:table2}. We can observe that optimal convergence rates for all the variables measured in $L^2$-error can be obtained. In addition, the accuracy for $L^2$-error of fluid velocity is slightly influenced by the values of $\mu$, which demonstrates the robustness of our method with respect to $\mu$.

\begin{table}[t]
\begin{center}
{\footnotesize
\begin{tabular}{cc||c c|c c|c c|c c}
\hline
 &Mesh & \multicolumn{2}{|c|}{$\|\bm{u}_S-\bm{u}_{S,h}\|_{0,\Omega_S}$} & \multicolumn{2}{|c|}{$\|\underline{\sigma}-\underline{\sigma}_h\|_{0,\Omega_S}$} & \multicolumn{2}{|c}{$\|\bm{u}_D-\bm{u}_{D,h}\|_{0,\Omega_D}$} &  \multicolumn{2}{|c}{$\|p_D-p_{D,h}\|_{0,\Omega_D}$} \\
\hline
$k$ & $h^{-1}$  & Error & Order & Error& Order& Error & Order & Error &Order\\
\hline
1& 2   & 2.42e-02 &   N/A   &5.61e-01 &  N/A &1.69e-02 &   N/A & 6.90e-03 &N/A\\
&  4  & 6.70e-03 &  1.85  &3.02e-01 & 0.89 &4.10e-03 & 2.05  &1.70e-03 &2.01 \\
&  8  & 1.70e-03 &  1.95  &1.56e-01 & 0.95 &9.45e-04& 2.03 & 4.29e-04 & 2.00\\
&  16   &4.44e-04 & 1.97  &7.95e-02 & 0.98 &2.46e-04 &2.02 & 1.07e-04 & 2.00\\
&  32  & 1.12e-04 & 1.99  &4.01e-02 & 0.99 &6.09e-05 &2.01& 2.68e-05 &  2.00 \\
\hline
2& 2   & 2.80e-03 &   N/A   &8.71e-02 &  N/A &7.86e-04 &   N/A & 3.00e-04&N/A\\
&  4  & 3.91e-04 &  2.83  &2.35e-02 & 1.89 &8.64e-05 & 3.18 & 3.69e-05 &3.02\\
&  8  & 5.25e-05 &  2.90  &6.20e-03 & 1.92 &1.02e-05& 3.07&4.58e-06 &3.00\\
&  16   &6.79e-06 & 2.95  &1.60e-03 & 1.96 &1.27e-06 &3.02&5.73e-07 &3.00\\
&  32  & 8.64e-07 & 2.98  &4.04e-04 & 1.98 &1.58e-07 &3.00 &7.16e-08&3.00 \\
\hline
3& 2   &2.19e-04 &   N/A  &7.60e-03 &  N/A &4.96e-05 &   N/A &7.61e-06&N/A\\
&  4  & 1.42e-05 &  3.95  &1.00e-03 & 2.91 &2.33e-06 & 4.41 &4.18e-07 &4.18 \\
&  8  & 9.00e-07 &  3.98  &1.29e-04 & 2.96 &1.31e-07& 4.15&2.58e-08 &4.01\\
&  16  &5.66e-08 & 3.99 &1.64e-05 & 2.98 &7.80e-09 &4.06&1.61e-09&4.00\\
&  32  &3.55e-09 & 3.99  &2.06e-06 & 2.99&4.77e-010 &4.03&1.00e-10&3.99  \\
\hline
\end{tabular}}
\caption{Convergence history with $\mu=1$ for Example~\ref{ex1}.}
\label{ex1:table1}
\end{center}
\end{table}

\begin{table}[t]
\begin{center}
{\footnotesize
\begin{tabular}{cc||c c|c c|c c|c c}
\hline
 &Mesh & \multicolumn{2}{|c|}{$\|\bm{u}_S-\bm{u}_{S,h}\|_{0,\Omega_S}$} & \multicolumn{2}{|c|}{$\|\underline{\sigma}-\underline{\sigma}_h\|_{0,\Omega_S}$} & \multicolumn{2}{|c}{$\|\bm{u}_D-\bm{u}_{D,h}\|_{0,\Omega_D}$} &  \multicolumn{2}{|c}{$\|p_D-p_{D,h}\|_{0,\Omega_D}$} \\
\hline
$k$ & $h^{-1}$  & Error & Order & Error& Order& Error & Order & Error &Order\\
\hline
1& 2   & 4.99e-02 &   N/A   &1.33e-01 &  N/A &2.34e-02 &   N/A & 7.00e-03 &N/A\\
&  4  & 1.57e-02 &  1.67  &5.72e-02 & 1.21 &3.90e-03 & 2.58  &1.70e-03 &2.05 \\
&  8  & 3.60e-03 &  2.11  &2.83e-02 & 1.02 &9.49e-04& 2.04 & 4.25e-04 & 1.99\\
&  16   &8.79e-04 & 2.04  &1.42e-02 & 1.00 &2.37e-04 &2.00 & 1.06e-04 & 2.00\\
&  32  & 2.17e-04 & 2.02  &7.10e-03 & 1.00 &5.93e-05 &2.00& 2.65e-05 &  2.00 \\
\hline
2& 2   & 6.66e-03 &   N/A   &2.35e-02 &  N/A &1.30e-03 &   N/A & 3.06e-04&N/A\\
&  4  & 8.23e-04 &  2.99  &6.00e-03 & 1.97 &1.22e-04 & 3.36 & 3.70e-05 &3.04\\
&  8  & 1.15e-04 &  2.84  &1.50e-03 & 1.99 &1.29e-05& 3.24&4.57e-06 &3.01\\
&  16   &1.53e-05 & 2.91  &3.75e-04 & 1.99 &1.63e-06 &2.98&5.72e-07 &3.00\\
&  32  & 1.97e-06 & 2.96  &9.38e-05 & 2.00 &1.95e-07 &3.06 &7.15e-08&2.99 \\
\hline
3& 2   & 7.27e-04 &   N/A  &1.80e-03 &  N/A &7.91e-05 &   N/A &7.46e-06&N/A\\
&  4  & 3.37e-05 &  4.43  &2.23e-04 & 2.99 &2.77e-06 & 4.83 &4.03e-07 &4.21 \\
&  8  & 1.91e-06 &  4.14  &2.80e-05 & 2.99 &1.62e-07& 4.09&2.55e-08 &3.98\\
&  16  &1.13e-07 & 4.07  &5.49e-06 & 3.00 &1.06e-08 &3.93&1.61e-09&3.99\\
&  32  &6.87e-09 & 4.04  &4.37e-07 & 3.00 &6.25e-010 &4.08&1.00e-10&3.99  \\
\hline
\end{tabular}}
\caption{Convergence history with $\mu=10^{-4}$ for Example~\ref{ex1}.}
\label{ex1:table2}
\end{center}
\end{table}

\subsection{Example 2}\label{ex2}

In this example, we set $\Omega_D=(0,1)\times (0,0.5)$ and $\Omega_S=(0,1)\times (0.5,1)$. The exact solution is defined by
\begin{align*}
 \bm{u}_S=
 \begin{cases}
-\sin(\pi x)\text{exp}(y/2)/(2\pi^2)\\
 \cos(\pi x)\text{exp}(y/2)/\pi
\end{cases},\quad p_D=-2\cos(\pi x)\text{exp}(y/2)/\pi.
\end{align*}
Here we set $G = 2/(1+4\pi^2)$ and $\mu=1$. In addition, we set the polynomial order $k=1$. We aim to test the convergence of Algorithm DDM for different values of $\delta_f$ and $\delta_f$.
First, we let $\delta_f=\delta_p=\delta$ and $\delta=0.5,0.25, 0.1$. We can observe from Table~\ref{ex2:table1} that the convergence rate of the algorithm is $h$-dependent for the case of $\delta_f=\delta_p$; indeed, more iterations are needed for smaller meshsize. This is consistent with our theoretical results as the converge rate of Algorithm DDM is proved to depend on $1-Ch$. In addition, the solution converges faster when $\delta$ is smaller. Next, we set $\delta_f<\delta_p$. We let $\delta_p=1$ and $\delta_f=1/2\delta_p,1/4\delta_p$. We can see from Table~\ref{ex2:table2} that less iterations are required compared to the case of $\delta_p=\delta_f$. In addition, the convergence rate is $h$-independent, which is consistent with our analysis given in Theorem~\ref{thm:convergence-DDM}. In both cases, we can achieve the optimal convergence rates for $L^2$ errors of all the variables. Moreover, we also display the velocity errors for both the Stokes and Darcy regions with respect to different choices of $\delta_p$ and $\delta_f$ in Figure~\ref{ex2:fig}. It shows that Algorithm DDM converges for $\delta_f\leq \delta_p$ and it tends to converge faster for smaller ratio of $\frac{\delta_f}{\delta_p}$, which is consistent with our theory.
%Test the convergence of the method and the convergence of iterative method with respect to different choices of $\delta_f$ and $\delta_p$

%We set $\mu=1$ and $G=1$, the polynomial order $k=1$.

\begin{table}[t]
\begin{center}
{\footnotesize
\begin{tabular}{ccc||c c|c c|c c|c c}
\hline
 &Mesh & Iteration& \multicolumn{2}{|c|}{$\|\bm{u}_S-\bm{u}_{S,h}\|_{0,\Omega_S}$} & \multicolumn{2}{|c|}{$\|\underline{\sigma}-\underline{\sigma}_h\|_{0,\Omega_S}$} & \multicolumn{2}{|c}{$\|\bm{u}_D-\bm{u}_{D,h}\|_{0,\Omega_D}$} &  \multicolumn{2}{|c}{$\|p_D-p_{D,h}\|_{0,\Omega_D}$} \\
\hline
$\delta$ & $h^{-1}$ &N  & Error & Order & Error& Order& Error & Order & Error &Order\\
\hline
0.5& 4  &144  & 3.50e-03 &   N/A   &1.29e-01 &  N/A &2.07e-02 &   N/A &4.20e-03 &N/A\\
&  8  &  264  & 8.78e-04 &  1.99  &6.94e-02 & 0.90 &5.10e-03 & 2.01  &1.10e-03 &1.99 \\
&  16 &312 & 2.17e-04 &  2.01  &3.62e-02 & 0.94 &1.30e-03& 2.00 & 2.65e-04 & 1.99\\
\hline
0.25& 4  &52  & 3.50e-03 &   N/A   &1.29e-01 &  N/A &2.07e-02 &   N/A & 4.20e-03 &N/A\\
&  8  & 98  & 8.78e-04 &  1.99  &6.94e-02 & 0.90 &5.10e-03 & 2.01  &1.11e-03 &1.99 \\
&  16 & 180 & 2.17e-04 &  2.01  &3.62e-02 & 0.94 &1.30e-03& 2.00 & 2.65e-04 & 1.99\\
\hline
0.1& 4  &22 &3.50e-03 &   N/A   &1.29e-01 &  N/A &2.07e-02 &   N/A & 4.20e-03  &N/A\\
&  8  &  42  & 8.78e-04 &  1.99  &6.94e-02 & 0.90 &5.10e-03 & 2.01  &1.11e-03  &1.99 \\
&  16 & 76 & 2.17e-04 &  2.01  &3.62e-02 & 0.94 &1.30e-04& 2.00 & 2.65e-04  & 1.99\\
\hline
\end{tabular}}
\caption{The convergence of Algorithm DDM for $\delta_p=\delta_f=\delta$ for Example~\ref{ex2}.}
\label{ex2:table1}
\end{center}
\end{table}

\begin{table}[t]
\begin{center}
{\footnotesize
\begin{tabular}{cccc||c| c|c | c}
\hline
$\delta_p$ &$\delta_f$ &$h^{-1}$ & Iteration& $\|\bm{u}_S-\bm{u}_{S,h}\|_{0,\Omega_S}$ & $\|\underline{\sigma}-\underline{\sigma}_h\|_{0,\Omega_S}$ & $\|\bm{u}_D-\bm{u}_{D,h}\|_{0,\Omega_D}$ &  $\|p_D-p_{D,h}\|_{0,\Omega_D}$\\
\hline
1 &1/2 & 4 &28  & 3.50e-03    &1.29e-01  &2.07e-02  &4.20e-03\\
& &  8  & 30  & 8.78e-04   &6.94e-02 &5.10e-03   &1.10e-03 \\
& & 16 &30 & 2.17e-04  &3.62e-02  &1.30e-03 & 2.65e-04 \\
& & 32 &30 & 5.40e-05  &1.85e-02  &3.21e-04 & 6.63e-05 \\
\hline
1 &1/4 & 4 &16  & 3.50e-03    &1.29e-01  &2.07e-02  &4.20e-03\\
& &  8  & 16  & 8.78e-04   &6.94e-02 &5.10e-03   &1.10e-03 \\
& & 16 &16 & 2.17e-04  &3.62e-02  &1.30e-03 & 2.65e-04 \\
& & 32 &16 & 5.40e-05  &1.85e-02  &3.21e-04 & 6.63e-05 \\
\hline
\end{tabular}}
\caption{The convergence of Algorithm DDM for $\delta_f<\delta_p$ for Example~\ref{ex2}.}
\label{ex2:table2}
\end{center}
\end{table}

\begin{figure}[t]
\centering
\includegraphics[width=0.45\textwidth]{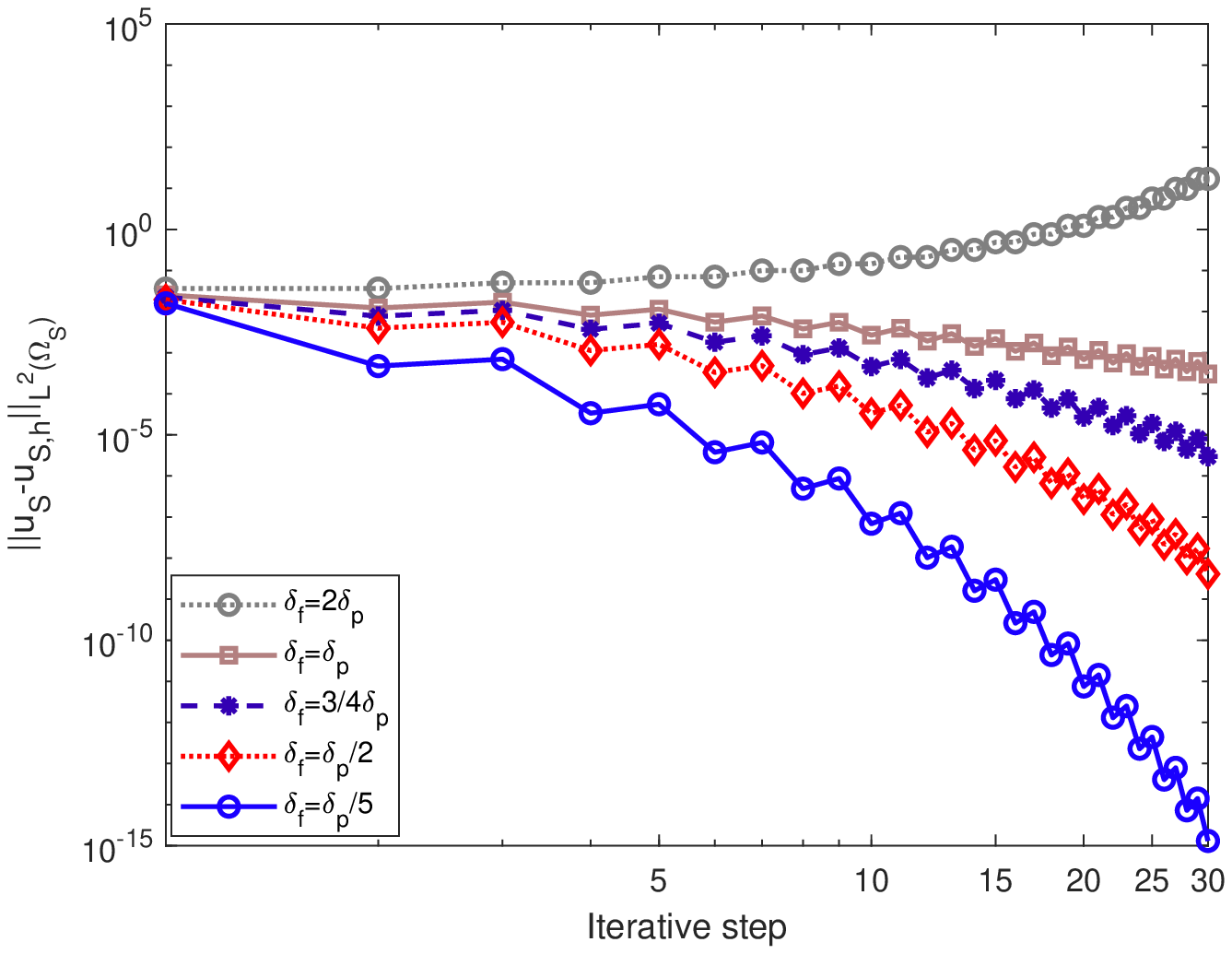}
\includegraphics[width=0.45\textwidth]{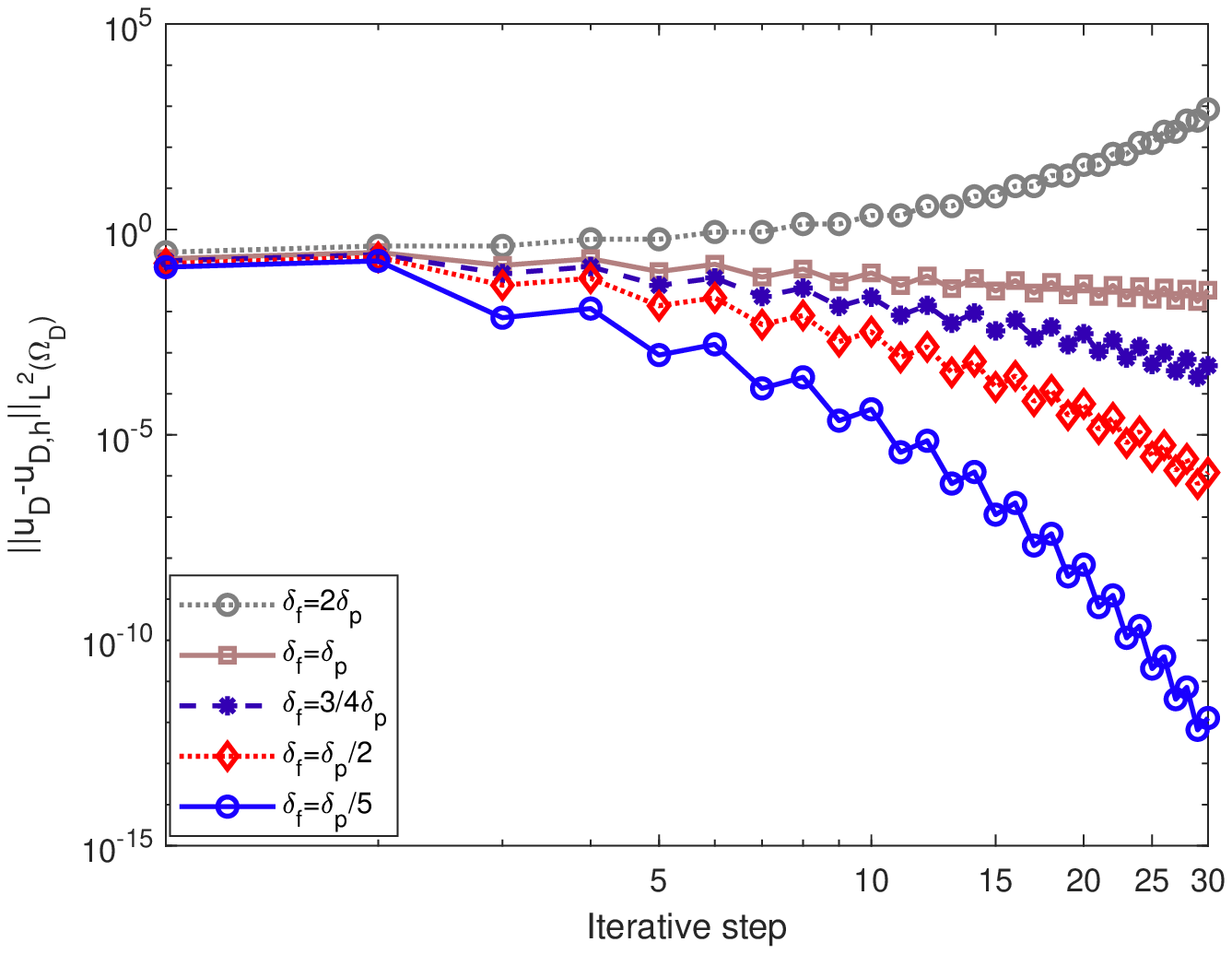}
\caption{The numerical velocity errors for the Stokes (left) and Darcy (right) regions with meshsize $h=1/2^4$ for Example~\ref{ex2}.}
\label{ex2:fig}
\end{figure}

\subsection{Example 3}\label{ex3}

In this example, we set $\Omega_D=(0,1)\times (0,1)$ and $\Omega_S=(0,1)\times (1,2)$.
We use the exact solution defined by
\begin{align*}
 \bm{u}_S=\begin{cases}
x^2(y-1)^2 +  y\\
 -2x(y-1)^3/3
\end{cases},\quad p_D=(2-\pi\sin(\pi x))\cos(\pi y).
\end{align*}
Here, we take $G=1/\mu$ and the interface conditions are satisfied exactly. In this example we attempt to test the convergence of Algorithm DDM with respect to different values of $\mu$. Figure~\ref{ex3:fig} shows the $L^2$ errors of fluid velocity and Darcy velocity with respect to various values of $\mu$ under the setting $\delta_p=\mu,\delta_f=\mu/4$. One can clearly observe that Algorithm DDM converges for all the cases, and converges slower for smaller values of $\mu$. Next, we fix $\mu=1$ and take various combinations of $\delta_f$ and $\delta_p$. We can observe from Figure~\ref{ex3:fig2} that when the ratio $\frac{\delta_f}{\delta_p}$ gets smaller, Algorithm DDM tends to converge faster, which is consistent with our theory.

%In order to set the interface conditions for various values of $\mu$, we set $G=1/\mu$.
%
%In this test, we test the convergence of the iterative method with respect to $\mu$.

\begin{figure}[t]
\centering
\includegraphics[width=0.45\textwidth]{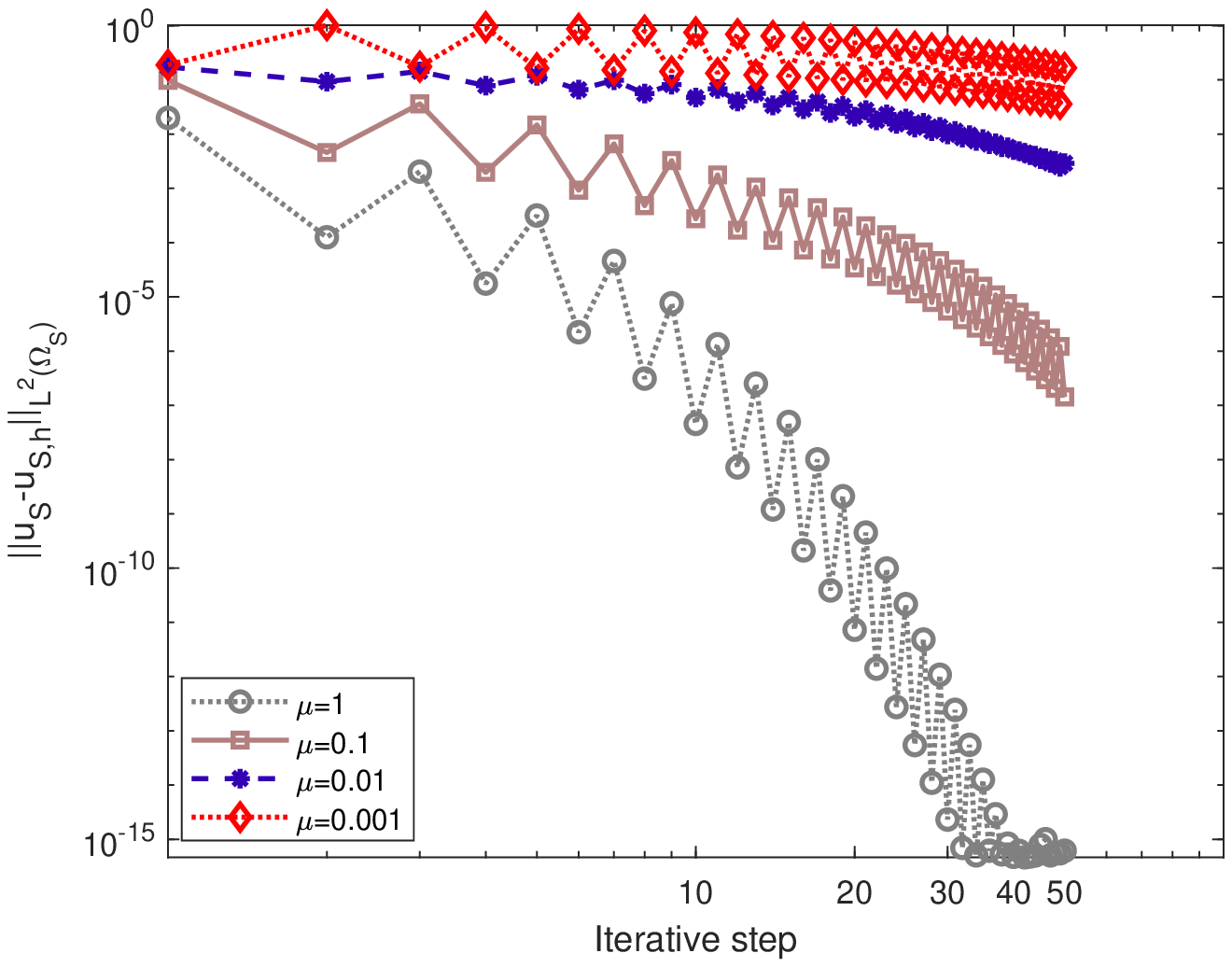}
\includegraphics[width=0.45\textwidth]{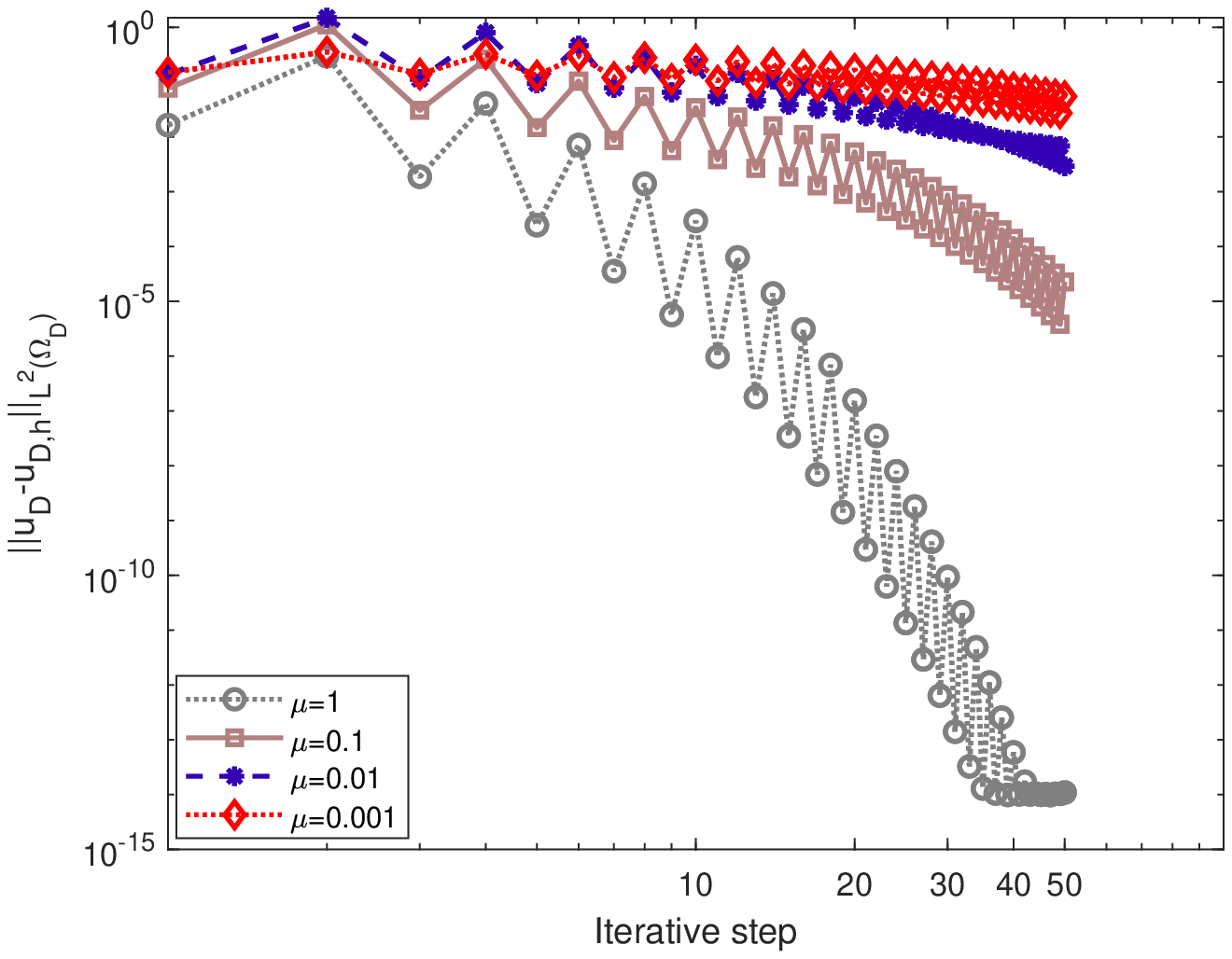}
\caption{The numerical velocity errors for the Stokes (left) and Darcy (right) regions with meshsize $h=1/2^4,\delta_p=\mu,\delta_f=\mu/4$ for Example~\ref{ex3}.}
\label{ex3:fig}
\end{figure}

%Next, we test the convergence of the iterative method for $\mu=1$ with different choices of $\delta_p$ and $\delta_f$. Hhre we fix $\delta_p=\mu$

\begin{figure}[t]
\centering
\includegraphics[width=0.45\textwidth]{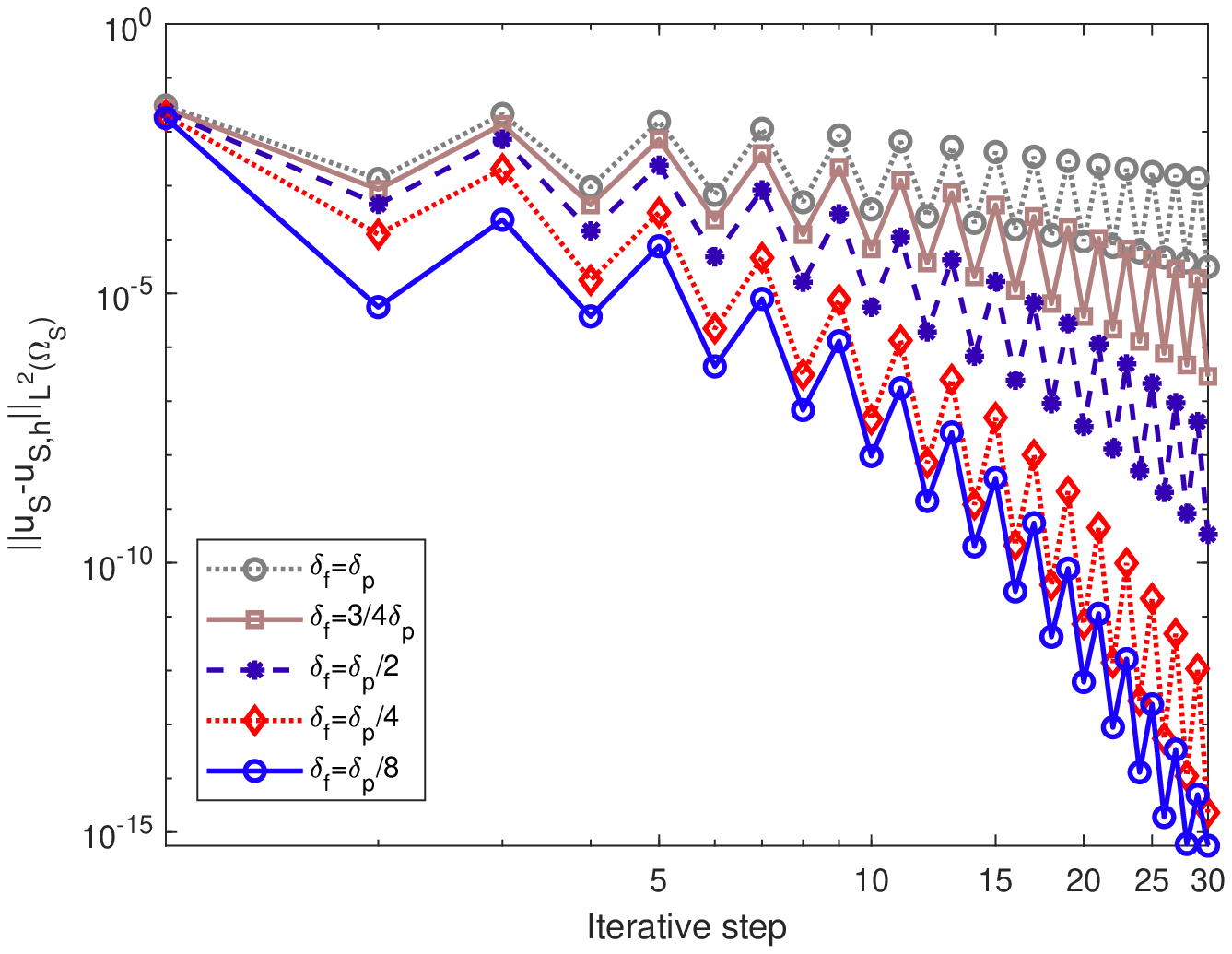}
\includegraphics[width=0.45\textwidth]{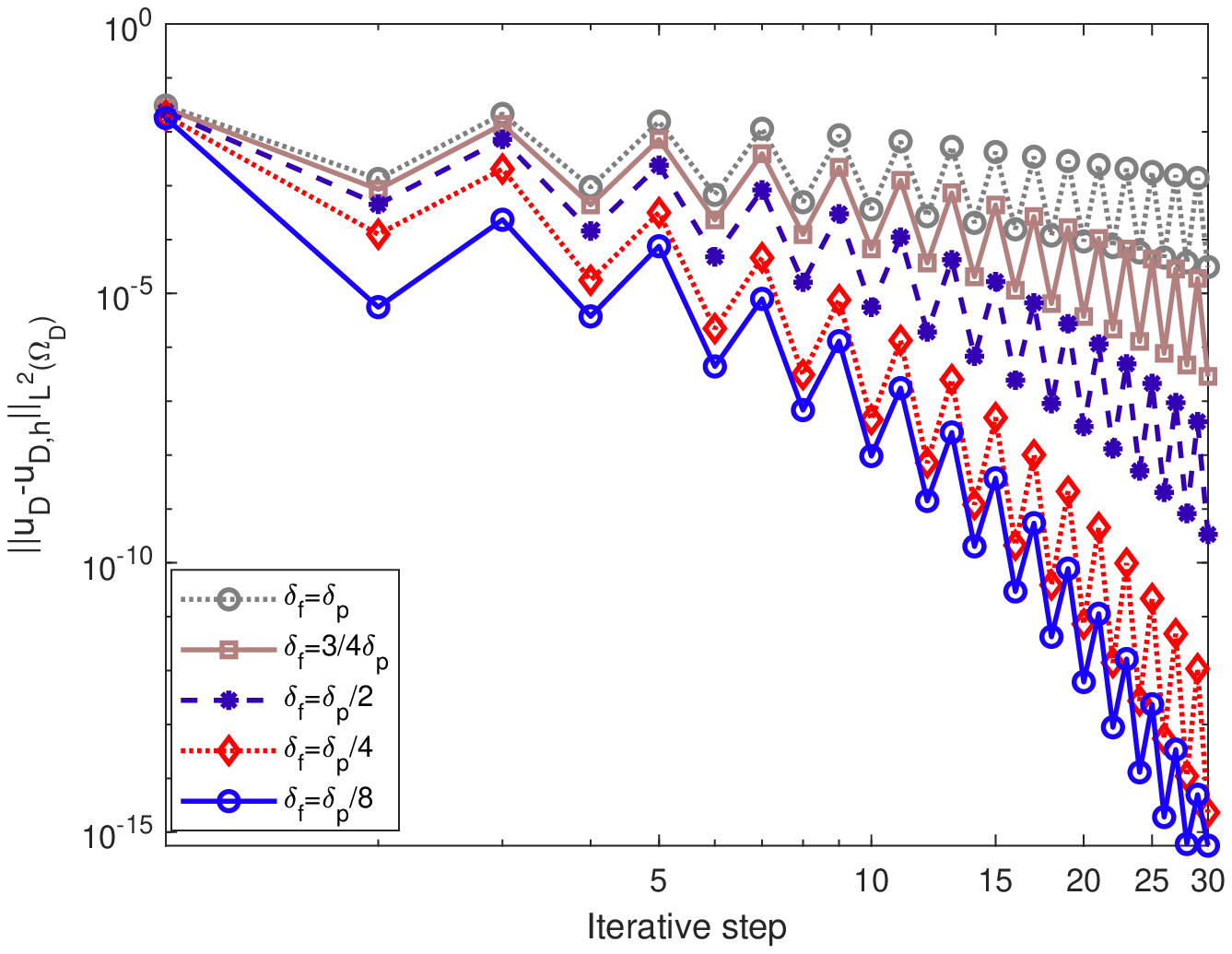}
\caption{The numerical velocity errors for the Stokes (left) and Darcy (right) with meshsize $h=1/2^4$ for Example~\ref{ex3}.}
\label{ex3:fig2}
\end{figure}

%\subsection{}
%
%
%In this test, we test the convergence of the iterative method with respect to $\nu$.
%
%
%\begin{align*}
% \bm{u}_S=\begin{cases}
%x^2(y-1)^2 +  y\\
% -2x(y-1)^3/3
%\end{cases},\quad p_D=(y-1)^3.
%\end{align*}
%which satisfies the interface conditions for various choices of $G$ and $\nu$.

\subsection{Example 4}\label{ex4}

Finally, we use the modified driven cavity flow to test the performance of Algorithm DDM. To this end, we set $\Omega_D=(0,1)\times (0.25,1)$ and $\Omega_S=(0,1)\times (1,1.25)$. The exact solution is unknown.
The boundary conditions for $\bm{u}_S$ are defined as
\begin{align*}
\bm{u}_S&=[\sin(\pi x),0]\quad \mbox{on}\; (0,1)\times \{1.25\},\\
\bm{u}_S&=[0,0]\quad \mbox{on}\; \{0\}\times (1,1.25)\cup \{1\}\times (1,1.25).
\end{align*}
Homogeneous Dirichlet boundary condition is imposed for $p_D$ on $\Gamma_D$.
The source term is taken to be $\bm{f}_S=(0,0)$ and $f_D=2\sin(\pi x)$.

We first display the approximated solution for $G=\mu=1$ with $h=1/32$ in Figure~\ref{ex4:fig1}. The converge of Algorithm DDM for the Stokes and Darcy regions is shown in Figure~\ref{ex4:fig2}, where different values of $\frac{\delta_f}{\delta_p}$ are used. As expected, the algorithm converges for $\delta_f\leq\delta_p$ and it tends to converge faster for smaller values of $\frac{\delta_f}{\delta_p}$.
\begin{figure}[t]
\centering
\includegraphics[width=0.32\textwidth]{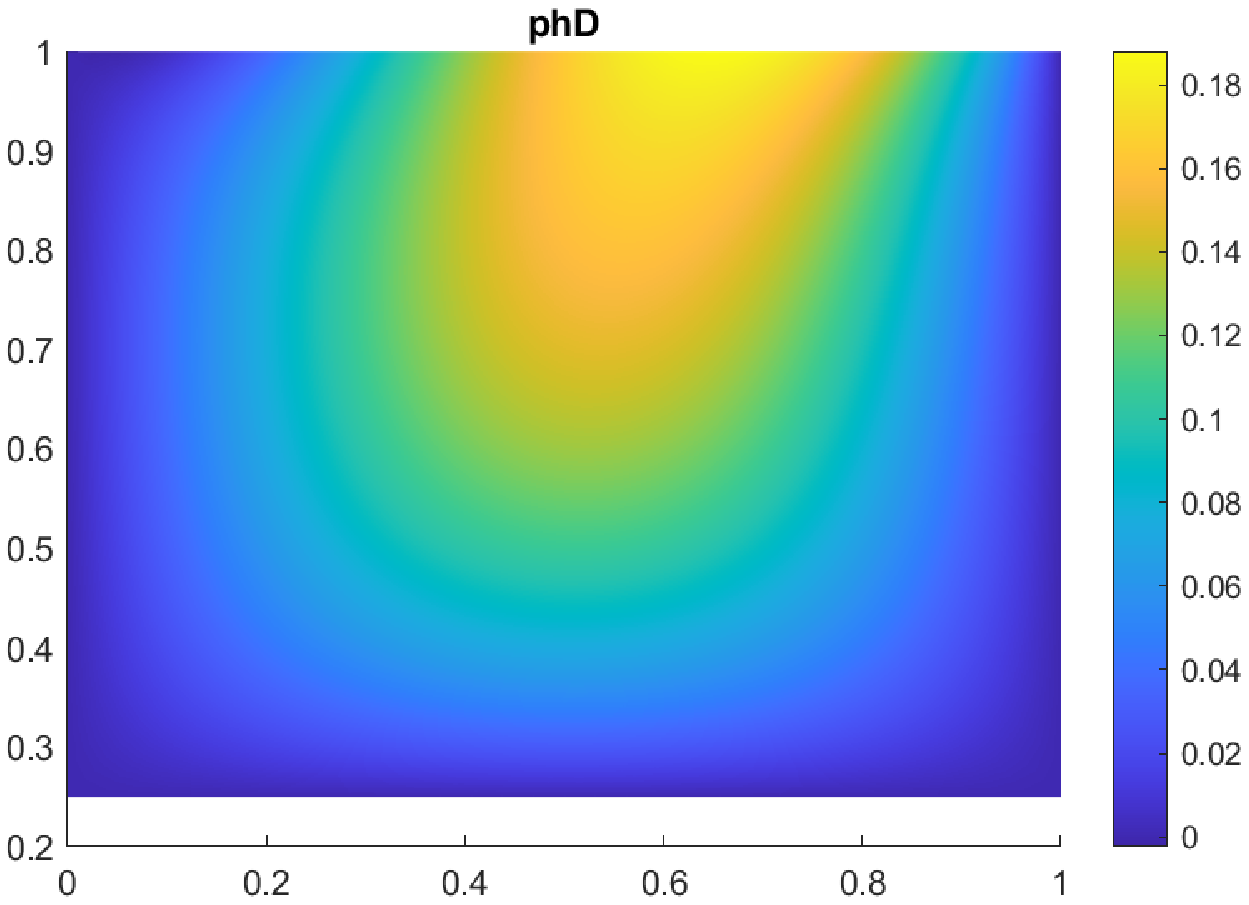}
\includegraphics[width=0.32\textwidth]{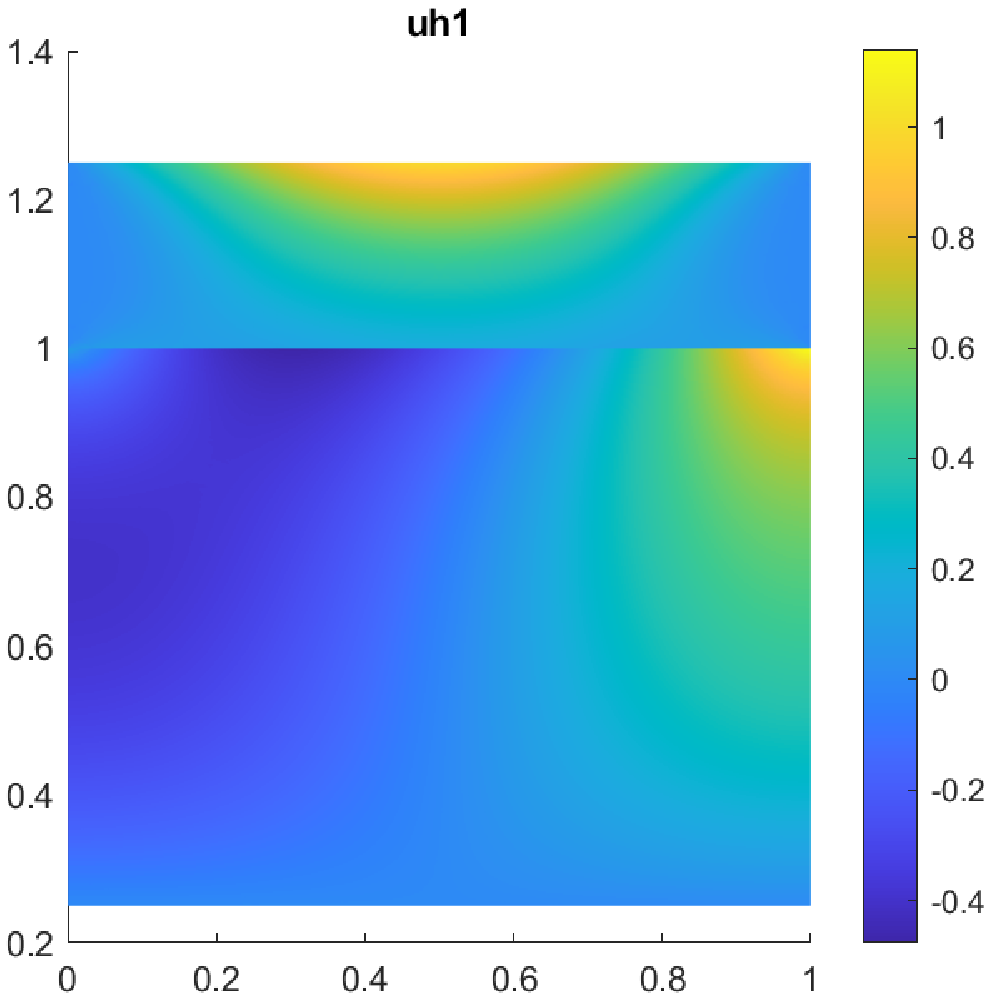}
\includegraphics[width=0.32\textwidth]{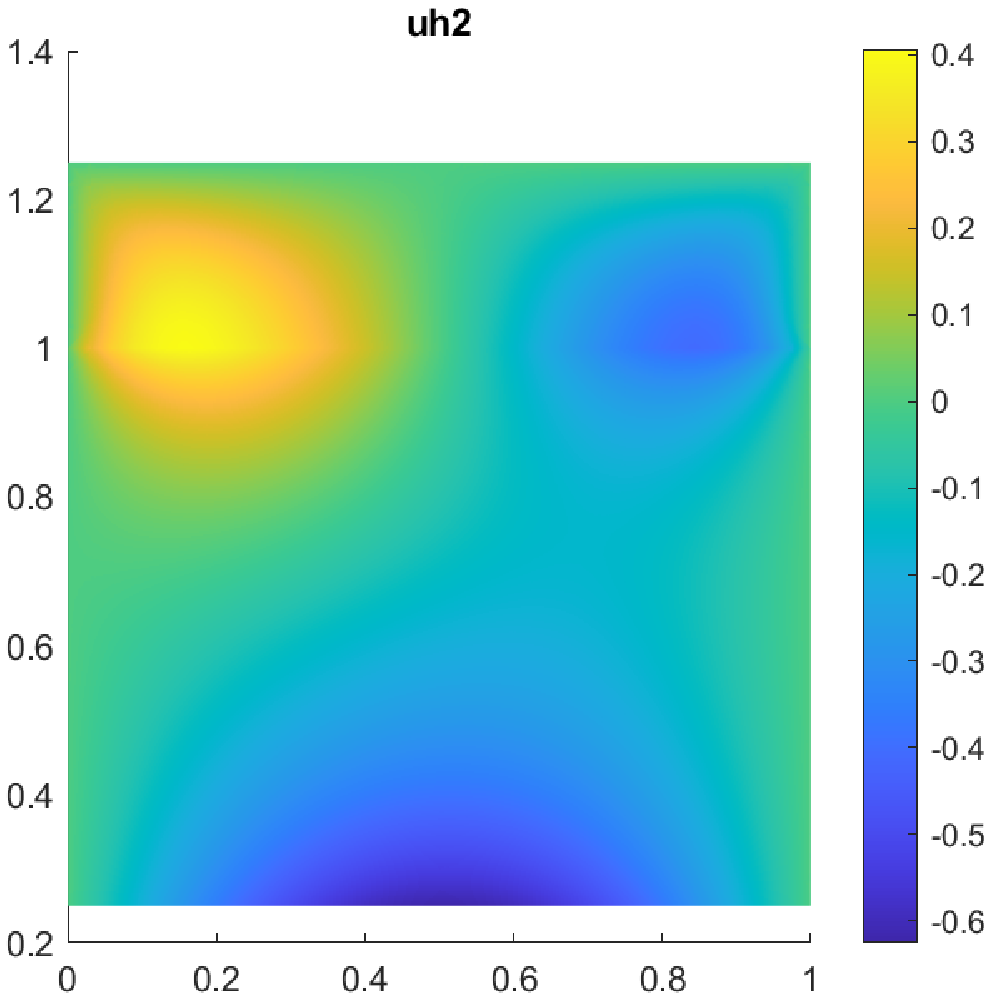}
\caption{Profiles of numerical solution for Example~\ref{ex4}.}
\label{ex4:fig1}
\end{figure}
%Next, we show the errors with respect to difference choices of $\delta_f$ and $\delta_p$ for fixed $\mu=1$.
\begin{figure}[t]
\centering
\includegraphics[width=0.45\textwidth]{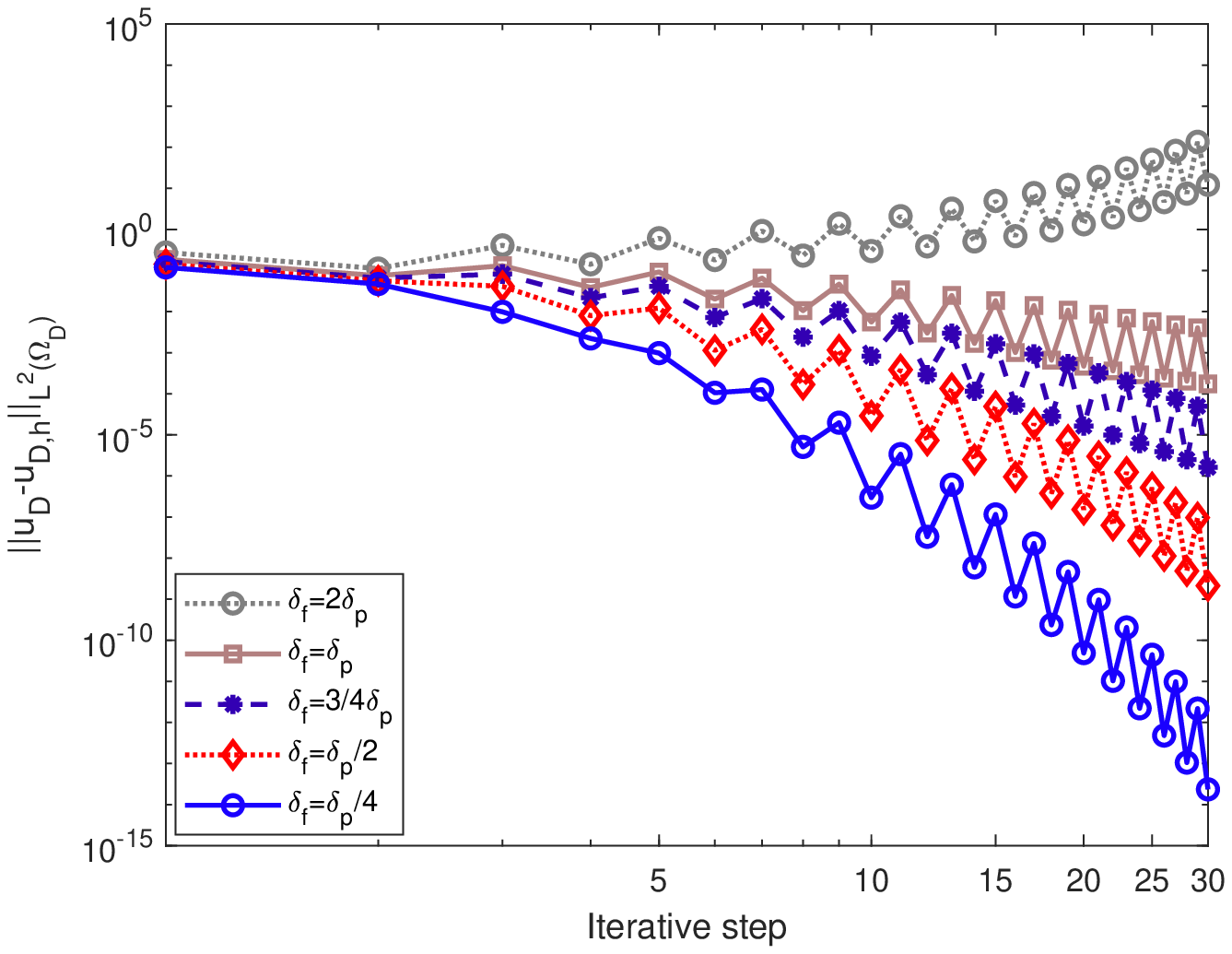}
\includegraphics[width=0.45\textwidth]{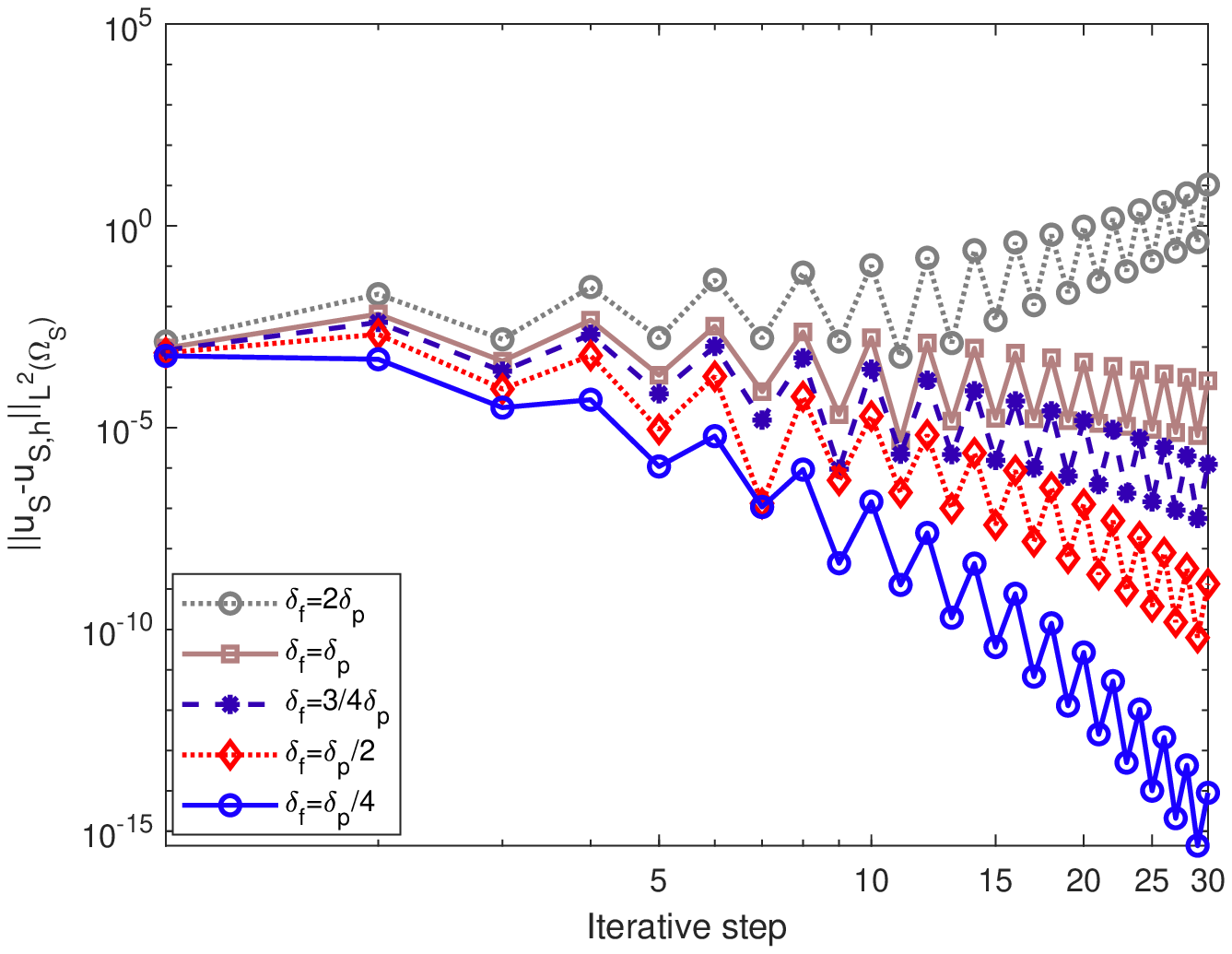}
\caption{The numerical velocity errors for the Stokes (left) and Darcy (right) regions for Example~\ref{ex4}.}
\label{ex4:fig2}
\end{figure}

\section{Conclusion}\label{sec:conclusion}

Our contributions for this paper are twofold. First, we devise and analyze a new method for the coupled Stokes-Darcy problem. This method is based on a stress-velocity formulation for the Stokes equations, which is rarely explored for the coupled Stokes-Darcy problem. The stress-velocity formulation is very important and addresses variables of physical interest. The interface conditions are imposed straightforwardly without resorting to additional variables. In addition, the normal continuity of velocity is satisfied exactly at the discrete level. The convergence error estimates for all the variables are provided. Next, we design a domain decomposition method to decouple the global system into the Stokes subproblem and the Darcy subproblem via a suitable application of Robin-type interface conditions. Moreover, the convergence of the proposed iterative method is analyzed.

%\bibliographystyle{plain}
%\bibliography{reference}

\end{document}